\newcommand{\inner}[2]{\langle#1,#2\rangle}
\newcommand{\norm}[1]{\|#1\|}
\theoremstyle{definition}
\newtheorem{theorem}{Theorem}[section]
\newtheorem{lemma}[theorem]{Lemma}
\newtheorem{proposition}[theorem]{Proposition}
\newtheorem{definition}[theorem]{Definition}
\newtheorem{remark}[theorem]{Remark}
\newtheorem{example}{Example}
\renewcommand{\@seccntformat}[1]{\csname the#1\endcsname.\hspace{1em}}
\def\thanks#1{\protected@xdef\@thanks{\@thanks
          \protect\footnotetext{#1}}}
\begin{document}

\title{Strong Convergence of Relaxed Inertial Inexact Progressive Hedging
Algorithm for Multi-stage Stochastic Variational Inequality Problems
\thanks{This research is supported by National Natural Science Foundation of China (Grant No.12071324)}}
\author{Jiaxin CHEN \textsuperscript{1,}\textsuperscript{2} \and Zunjie HUANG \textsuperscript{1} \and Haisen ZHANG \textsuperscript{1} \thanks{\textsuperscript{1} School of Mathematical Sciences, Sichuan Normal University, Sichuan $610066$, China.} \thanks{\textsuperscript{2} Chengdu Public Security Bureau, Sichuan $610017$, China.}  \thanks{Corresponding author: Haisen Zhang. E-mail address: haisenzhang@yeah.net}}
\date{ }
\maketitle

\begin{abstract}
A Halpern-type relaxed inertial inexact progressive hedging algorithm ($\mbox{PHA}$) is proposed for solving multi-stage stochastic variational inequalities in general probability spaces. The subproblems in this algorithm are allowed to be  calculated inexactly. It is found that the Halpern-type relaxed inertial inexact $\mbox{PHA}$ is closely related to the Halpern-type relaxed inertial inexact proximal point algorithm ($\mbox{PPA}$). The strong convergence of the Halpern-type relaxed inertial inexact $\mbox{PHA}$ is proved under appropriate conditions. Some numerical examples are given to indicate that the over-relaxed parameter and the inertial term can accelerate the convergence of the algorithm.
\end{abstract}

\textbf{Key words:} Multi-stage stochastic variational inequality; inexact progressive hedging algorithm; proximal point algorithm; nonanticipativity; strong convergence.

\vspace{+0.5em}

\textbf{AMS subject classifications:} 65K10; 65K15; 90C15; 90C25

\section{Introduction}

The stochastic variational inequalities, as useful tools for solving optimization problems,  equilibrium problems and  optimal control problems with uncertainty, have important applications in economics, management, finance and engineering. In order to model different stochastic problems, different types of stochastic variational inequalities ($\mbox{SVIs}$ for short) have been proposed over the past two decades. For instance, the one-stage Expected-Value $\mbox{SVIs}$ \cite{Gurkan99,Jiang16,Ravat11,Ravat17}, the one-stage Expected-Residual-Minimization $\mbox{SVIs}$ \cite{Chen05,Ma13,Zhao17}, and the two-stage $\mbox{SVIs}$ \cite{Chen17,Chen19,Chen2019,JiangChenChen20,Jiang21,Li20,Sun21,WangChen23}.
The multi-stage $\mbox{SVIs}$ with nonanticipativity constraints in discrete probability spaces were first proposed by Rockafellar and Wets \cite{Rockafellar17} in 2017. The multi-stage $\mbox{SVIs}$ in general probability spaces were introduced in \cite{Zhang22}.   Unlike the discrete case, the multi-stage $\mbox{SVIs}$ defined in general probability spaces are typically a class of infinite-dimensional variational inequality problems. As illustrated in \cite[Example 2.2]{Zhang22},  extending the concept of multi-stage $\mbox{SVIs}$ to general probability spaces is primarily motivated by the need to solve infinite-dimensional stochastic convex optimal control problems.

It is shown in \cite{Rockafellar17} that  the one-stage Expected-Value $\mbox{SVIs}$ and the two-stage $\mbox{SVIs}$ are two special cases of the multi-stage $\mbox{SVIs}$ with nonanticipativity constraints. Additionally, there are essential differences between the multi-stage $\mbox{SVIs}$ and the one-stage Expected-Value $\mbox{SVIs}$ and the two-stage $\mbox{SVIs}$. On the one hand, the decision variable in the one-stage Expected-Value $\mbox{SVIs}$ is deterministic (independent of sample points) and can thus be viewed as structured variational inequalities defined in a finite-dimensional space. In contrast, the decision variables in the two-stage and multi-stage $\mbox{SVIs}$ are random variables (depending on sample points). Specifically, when the number of sample points in the sample space is infinite, the two-stage and multi-stage $\mbox{SVIs}$ are infinite-dimensional variational inequalities defined on a proper space of random variables. On the other hand, the first stage of the two-stage $\mbox{SVIs}$ requires the decision variable to be independent of random information, while the second stage allows the decision variable to utilize all the random information. Consequently, in two-stage $\mbox{SVIs}$, computing the projection onto the nonanticipative constraint set only requires calculating the mathematical expectation of the first stage. In multi-stage $\mbox{SVIs}$, from the second stage to the penultimate stage, the decision variable can only utilize partial random information. Therefore, in multi-stage $\mbox{SVIs}$, calculating the projection onto the nonanticipative constraint set requires computing a collection of conditional mathematical expectations. Compared with the one-stage Expected-Value $\mbox{SVIs}$ and two-stage $\mbox{SVIs}$, research on algorithms for multi-stage $\mbox{SVIs}$ is still in its initial stages.

The progressive hedging algorithm ($\mbox{PHA}$ for short) is one of the main methods for solving multi-stage $\mbox{SVIs}$. It was first proposed by Rockafellar and Wets in \cite{Rockafellar91} for solving stochastic convex programming problems with nonanticipativity constraints and then extended by Rockafellar and Sun in \cite{Rockafellar19} to solve two-stage and multi-stage $\mbox{SVIs}$. Subsequently, the $\mbox{PHA}$ was developed to solve two-stage quadratic games under uncertainty (see Zhang, Sun and Xu \cite{Zhang19}), Lagrangian multi-stage monotone $\mbox{SVIs}$ (see Rockafellar and Sun \cite{Rockafellar20}), multi-stage stochastic minimization problems (see Sun, Xu and Zhang \cite{Sun20}), multi-stage pseudomonotone $\mbox{SVIs}$ (see Cui, Sun and Zhang \cite{Cui23}) and nonmonotone $\mbox{SVIs}$ (see Cui and Zhang \cite{CZ23}). It was proved in \cite{Rockafellar19}  that
the $\mbox{PHA}$ for multi-stage $\mbox{SVIs}$ is equivalent to the proximal point algorithm ($\mbox{PPA}$ for short) for finding zeros of a proper maximal monotone operator.

The $\mbox{PPA}$ is a powerful algorithm for solving optimization problems and maximal monotone inclusion problems. It and its various types of modifications have been extensively studied in the past decades. The weak convergence of $\mbox{PPA}$ was first proved by Martinet \cite{Martinet70} in Hilbert space in the special case that the regularization parameter is a fixed constant. It should be noted that the $\mbox{PPA}$ is an implicit iterative algorithm. To obtain the next iteration, it is imperative to solve an optimization subproblem  or a monotone inclusion subproblem. Usually, accurate solutions to those subproblems are difficult to achieve. To address this challenge, Rockafellar~\cite{Rockafellar76} proposed an inexact version of $\mbox{PPA}$ with variable parameters for solving monotone inclusion problems, which allows the proximal subproblem in each iteration to be solved inexactly. When the regularization parameter sequence is bounded away from zero, Rockafellar~\cite{Rockafellar76} proved the weak convergence and linear convergence rate of inexact $\mbox{PPA}$ in Hilbert space.
In general, the inexact $\mbox{PPA}$ does not have strong convergence in infinite-dimensional spaces (see, for instance, G{\"{u}}ler \cite{Guler91}). To obtain the strong convergence of the inexact $\mbox{PPA}$, many modified inexact $\mbox{PPAs}$ have been proposed. In \cite{Halpern67}, Halpern found that a suitable convex combination of the initial point and the Picard iteration of the nonexpansive mapping can make the Picard iteration sequence converge strongly. By combining Halpern's method with Rockafellar's inexact $\mbox{PPA}$, Xu \cite{Xu02} proposed a Halpern-type inexact $\mbox{PPA}$ and obtained the strong convergence of the algorithm.

In order to accelerate the convergence of $\mbox{PPAs}$, many  accelerated $\mbox{PPAs}$ have been proposed in recent years. One classical accelerated $\mbox{PPA}$ is the relaxed inexact $\mbox{PPA}$, which was first studied by Gol'shtein and Tret'yakov \cite{Golshtein79}. The global convergence of the relaxed inexact $\mbox{PPA}$ was obtained in the finite-dimensional Euclidean space. The regularization parameter is a fixed positive constant.  Eckstein and Bertsekas \cite{Eckstein92} proved the weak convergence of the relaxed inexact $\mbox{PPA}$ with variable parameters in  Hilbert space. Based on the techniques in \cite{Eckstein92} and \cite{Xu02}, Wang and Cui \cite{Wang15} proposed a strongly convergent Halpern-type relaxed inexact $\mbox{PPA}$ in Hilbert space. As shown in \cite{Bertsekas82,Eckstein92,Wang15}, the over-relaxed parameters can speed up the convergence of the relaxed inexact $\mbox{PPAs}$.

Another type of accelerated $\mbox{PPA}$ is called the inertial $\mbox{PPA}$, wherein the next iterate is defined by utilizing the previous two iterates, as seen in \cite{Alvarez01,Lorenz15,Mainge08}. The key idea of inertial $\mbox{PPA}$ originally came from the multistep method for solving nonlinear equations proposed by Polyak \cite{Polyak64}. The weak convergence of inertial $\mbox{PPA}$ was first obtained by Alvarez and Attouch in \cite{Alvarez01} in Hilbert space. Subsequently, Alvarez  \cite{Alvarez04} proposed a relaxed inertial hybrid projection $\mbox{PPA}$ that unified the relaxed and inertial acceleration strategies of $\mbox{PPA}$.  The weak convergence of the algorithm is proved in Hilbert space under certain implementable inexact conditions. Recently, the convergence of the relaxed inertial proximal algorithm has been further studied in \cite{Attouch2020} in Hilbert space.
Tan, Zhou and Li \cite{Tan20} proposed two modified inertial Mann algorithms for nonexpansive mapping and proved the strong convergence of these two algorithms. These two algorithms can be regarded as the modified relaxed inertial $\mbox{PPA}$ with under-relaxed parameters.

The equivalence between the $\mbox{PHA}$ for multi-stage $\mbox{SVIs}$ and the $\mbox{PPA}$ for maximal monotone inclusion problems established in \cite{Rockafellar19} motivates us to propose some accelerated $\mbox{PHAs}$ for  multi-stage $\mbox{SVIs}$ using the acceleration techniques developed in $\mbox{PPAs}$. Along this line, Chen and Zhang \cite{Chen23} proposed two relaxed inexact $\mbox{PHAs}$ for solving multi-stage $\mbox{SVIs}$ in general probability spaces. Similar to the relaxed inexact $\mbox{PPAs}$, the over-relaxed parameter can speed up the convergence of the algorithms.

In this paper, we shall propose a Halpern-type relaxed inertial inexact $\mbox{PHA}$ for solving multi-stage $\mbox{SVIs}$. It is proved that the  Halpern-type relaxed inertial inexact $\mbox{PHA}$ is equivalent to a Halpern-type relaxed inertial inexact $\mbox{PPA}$ for finding zeros of a
proper maximal monotone operator. Based on the equivalence, the strong convergence of the Halpern-type relaxed inertial inexact $\mbox{PHA}$ is proved in general probability spaces by the strong convergence result of the Halpern-type relaxed inertial inexact $\mbox{PPA}$.  Numerical examples show that the Halpern-type relaxed inertial inexact $\mbox{PHA}$ has better numerical performance than the original $\mbox{PHA}$ and Halpern-type relaxed inexact $\mbox{PHA}$.  It should be remarked that the convergence result of Halpern-type relaxed inertial inexact $\mbox{PPA}$ for maximal monotone operators presented in this paper is novel. On the one hand, different from \cite{Alvarez04} and \cite{Attouch2020}, in our algorithm the inexact calculation of the resolvent and Halpern's technique for strong  convergence of the algorithm are considered. On the other hand, compared with \cite{Tan20}, our algorithm involves the inexact calculation of the resolvent and the over-relaxed parameter.

The rest of this paper is organized as follows: In Section 2, we give some basic notations, definitions, and results from set-valued and variational analysis, and introduce the models of multi-stage $\mbox{SVIs}$ in a general probability space. In Section 3,  we study the Halpern-type relaxed inertial inexact $\mbox{PHA}$ for multi-stage $\mbox{SVIs}$ and its equivalence with the Halpern-type relaxed inertial inexact $\mbox{PPA}$.  Based on  the equivalence, we prove the strong convergence of the Halpern-type relaxed inertial inexact $\mbox{PHA}$. In Section 4, numerical examples are presented to show the efficiency of the Halpern-type relaxed inertial inexact $\mbox{PHA}$.  Finally, in Section 5, we give a proof of the strong convergence of the Halpern-type relaxed inertial inexact $\mbox{PPA}$ for maximal monotone inclusion problems.

\section{Preliminaries}
In this section, we first recall some basic notions and results in set-valued and variational analysis. Then, we give the definition of multi-stage $\mbox{SVIs}$ in a general probability space.

\subsection{Set-valued and Variational Analysis}

In this subsection, we recall some notations and preliminary results in set-valued and variational analysis. We refer the readers to \cite{Aubin09} and \cite{Bauschke01} for more details.

Let $\mathcal{H}$ be a Hilbert space and $\mathcal{D}$ be a nonempty closed convex subset of $\mathcal{H}$. The symbols $\norm{\cdot}_{\mathcal{H}}$ and $\inner{\cdot}{\cdot}_{\mathcal{H}}$ are used to denote the norm and inner product of $\mathcal{H}$, respectively.
Let $\{u^{k}\}_{k=0}^{\infty}$ be a sequence in $\mathcal{H}$, we write $u^{k}\rightarrow u$ and $u^{k}\rightharpoonup u$ to indicate that the sequence $\{u^{k}\}_{k=0}^{\infty}$ converges strongly and weakly to $u$, respectively. A mapping $F:\mathcal{D}\rightarrow\mathcal{D}$ is said to be nonexpansive if $$\norm{F(x)-F(y)}_{\mathcal{H}}\leq\norm{x-y}_{\mathcal{H}}$$ for all $x,y\in\mathcal{D}$.
The normal cone $N_{\mathcal{D}}(x)$ of $\mathcal{D}$ at $x$ is defined by
\begin{equation*}
 N_{\mathcal{D}}(x):=
\begin{cases}
\big\{\eta\in \mathcal{H}~\big|~\inner{\eta}{y-x}_{\mathcal{H}}\leq 0,~\forall~y\in \mathcal{D} \big\}, &  x\in \mathcal{D};  \vspace{+0.5em}\\
 \emptyset, &  \mbox{otherwise}.
\end{cases}
\end{equation*}
The metric projection of $x\in \mathcal{H}$ onto $\mathcal{D}$ is defined by
\begin{equation*}
\Pi_{\mathcal{D}}(x):=\big\{\bar{x}\in \mathcal{D}~\big|~\norm{x-\bar{x}}_{\mathcal{H}}=\inf\limits_{y\in \mathcal{D}}\norm{x-y}_{\mathcal{H}}\big\}.
\end{equation*}
Clearly, the operator $\Pi_{\mathcal{D}}: \mathcal{H}\to \mathcal{D}$ is  single-valued  and nonexpansive. In addition,
$$\inner{x-\Pi_{\mathcal{D}}(x)}{y-\Pi_{\mathcal{D}}(x)}_{\mathcal{H}}\leq 0$$
for any  $y\in \mathcal{D}$, or equivalently, $x-\Pi_{\mathcal{D}}(x)\in  N_{\mathcal{D}}(\Pi_{\mathcal{D}}(x))$.

Let $\mathcal{H}_{1}$ be a nonempty closed linear subspace of $\mathcal{H}$. The orthogonal projection of $x\in \mathcal{H}$ onto $\mathcal{H}_{1}$ is defined by
\begin{equation*}
P_{\mathcal{H}_{1}}(x):=\big\{x_1\in \mathcal{H}_{1}~\big|~\inner{x-x_1}{y}_{\mathcal{H}}=0,~\forall~y\in \mathcal{H}_{1}\big\}.
\end{equation*}
It is clear that $P_{\mathcal{H}_{1}}$ is a bounded linear operator.

Let $(X,\mathscr{S},\mu)$ be a complete $\sigma$-finite measure space and $Y$ be a complete separable metric space. A set-valued mapping $T:X\rightsquigarrow Y$ is characterized by its graph $\mbox{Gph}(T)$, the subset of the product space $X\times Y$ defined by $\mbox{Gph}(T):=\big\{(x,y)\in X\times Y~\big|~y\in T(x)\big\}$. We will use $T(x)$ to denote the value of $T$ at $x$, which is a subset of $Y$. The domain of $T$ is denoted by $\mbox{Dom}(T)$ and defined by $\mbox{Dom}(T):=\big\{x\in X~\big|~ T(x)\neq \emptyset \big\}$. The image of $T$ is denoted by $\mbox{Im}(T)$ and defined by $\mbox{Im}(T):=\bigcup\limits_{x\in X}T(x)$. The inverse of $T$, denoted by $T^{-1}$, is defined through its graph $\mbox{Gph}(T^{-1}):=\big\{(y,x)\in Y\times X~\big|~(x,y)\in \mbox{Gph}(T)\big\}$.

\begin{definition}\label{monotone}\cite[Definition 3.5.1, Definition 3.5.4]{Aubin09}
A set-valued mapping $T:\mathcal{H}\rightsquigarrow \mathcal{H}$ is said to be monotone if its graph is monotone in the sense that
\begin{equation*}
\inner{\xi-\zeta}{u-v}_{\mathcal{H}}\geq 0,~\forall~(u,\xi),(v,\zeta)\in \mbox{Gph}(T).
\end{equation*}
A set-valued mapping $T$ is said to be maximal monotone if it is monotone and there is no other monotone set-valued mapping whose graph strictly contains the graph of $T$.
\end{definition}

\begin{lemma} \cite[Lemma 3.4]{Chen23}\label{maxi ATA}
Let $A: \mathcal{H}\to \mathcal{H}$ be a symmetric invertible linear operator, $T:\mathcal{H}\rightsquigarrow \mathcal{H}$ be a maximal monotone set-valued map. Then, the set-valued map $ATA:\mathcal{H}\rightsquigarrow \mathcal{H}$ is also maximal monotone.
\end{lemma}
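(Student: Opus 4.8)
The plan is to reduce the maximality of $ATA$ to the maximality of $T$ by exploiting the fact that $A$ is a linear isomorphism of $\mathcal{H}$ onto itself. First I would observe that, since $A$ is symmetric and invertible, its inverse $A^{-1}$ exists, is again a bounded linear operator, and is itself symmetric (because $(A^{-1})^{*} = (A^{*})^{-1} = A^{-1}$). Thus $A$ is a bicontinuous linear bijection. The key structural identity is that $(u,w) \in \mbox{Gph}(ATA)$ if and only if $w \in A\,T(Au)$, i.e. $A^{-1}w \in T(Au)$, i.e. $(Au, A^{-1}w) \in \mbox{Gph}(T)$. So the change of variables $(u,w) \mapsto (Au, A^{-1}w)$ maps $\mbox{Gph}(ATA)$ bijectively onto $\mbox{Gph}(T)$.

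Next I would verify monotonicity of $ATA$, which is routine: if $(u,w),(v,z) \in \mbox{Gph}(ATA)$ then $(Au, A^{-1}w),(Av, A^{-1}z) \in \mbox{Gph}(T)$, and using symmetry of $A$,
\begin{equation*}
\inner{w-z}{u-v}_{\mathcal{H}} = \inner{A(A^{-1}w - A^{-1}z)}{u-v}_{\mathcal{H}} = \inner{A^{-1}w - A^{-1}z}{Au - Av}_{\mathcal{H}} \geq 0,
\end{equation*}
the last inequality by monotonicity of $T$. Hence $ATA$ is monotone.

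For maximality I would argue by contradiction (or directly via the bijection): suppose $(u_{0},w_{0}) \in \mathcal{H}\times\mathcal{H}$ satisfies $\inner{w_{0}-w}{u_{0}-u}_{\mathcal{H}} \geq 0$ for every $(u,w)\in\mbox{Gph}(ATA)$. Set $p_{0} := Au_{0}$ and $q_{0} := A^{-1}w_{0}$. For an arbitrary $(p,q)\in\mbox{Gph}(T)$, put $u := A^{-1}p$ and $w := Aq$, so that $(u,w)\in\mbox{Gph}(ATA)$; then, exactly as in the monotonicity computation but run backwards,
\begin{equation*}
\inner{q_{0}-q}{p_{0}-p}_{\mathcal{H}} = \inner{A^{-1}w_{0} - Aq}{Au_{0} - p}_{\mathcal{H}} = \inner{w_{0}-w}{u_{0}-u}_{\mathcal{H}} \geq 0,
\end{equation*}
where I used $\inner{A^{-1}w_{0}}{Au_{0}} = \inner{w_{0}}{u_{0}}$ and similar identities from symmetry of $A$ and $A^{-1}$. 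Thus $(p_{0},q_{0})$ is monotonically related to all of $\mbox{Gph}(T)$, so by maximality of $T$ we get $q_{0}\in T(p_{0})$, i.e. $A^{-1}w_{0} \in T(Au_{0})$, which is precisely $(u_{0},w_{0})\in\mbox{Gph}(ATA)$. Hence no proper monotone extension of $ATA$ exists, and $ATA$ is maximal monotone.

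I do not expect a serious obstacle here; the only point requiring a little care is bookkeeping the inner-product manipulations so that the symmetry of $A$ (and of $A^{-1}$) is invoked correctly — in particular, using $\inner{Ax}{y} = \inner{x}{Ay}$ and $\inner{A^{-1}x}{y} = \inner{x}{A^{-1}y}$ — and making sure the substitution $(u,w)\mapsto(A^{-1}\cdot\,,A\cdot)$ is genuinely a bijection between the two graphs, which follows immediately from invertibility of $A$. (Alternatively, one could invoke the known fact that a bounded linear homeomorphism $B$ with $B^{*}=B$ preserves maximal monotonicity under the conjugation $T\mapsto B^{*}TB$, which is exactly the statement with $B=A$; but the self-contained argument above is short enough to include directly.)
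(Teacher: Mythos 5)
Your proof is correct. The paper does not actually prove this lemma — it only cites \cite[Lemma 3.4]{Chen23} — and your argument (monotonicity by the symmetry identity $\inner{Ax}{y}_{\mathcal{H}}=\inner{x}{Ay}_{\mathcal{H}}$, maximality via the change of variables $(u,w)\mapsto(Au,A^{-1}w)$ that carries $\mbox{Gph}(ATA)$ bijectively onto $\mbox{Gph}(T)$ together with the standard ``monotonically related pair'' characterization of maximality) is exactly the expected, standard route. One cosmetic slip: in the displayed maximality computation the middle term should be $\inner{A^{-1}w_{0}-A^{-1}w}{Au_{0}-Au}_{\mathcal{H}}$ (since $q=A^{-1}w$, not $Aq$); the first and last expressions of the chain are equal as claimed, so nothing in the argument is affected.
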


A single-valued map $F:\mathcal{H}\rightarrow \mathcal{H}$ is hemicontinuous if it is continuous from line segments in $\mathcal{H}$ to the weak topology in $\mathcal{H}$, i.e., for any $x,y\in\mathcal{H}$ and $z\in \mathcal{H}$, the function
\begin{equation*}
t\mapsto \inner{z}{F(tx+(1-t)y)}_{\mathcal{H}}, \quad t\in [0,1]
\end{equation*}
is continuous.

\begin{lemma}\cite[Theorem 3]{Rockafellar70}\label{F and Nk} Let  $F:\mathcal{H}\rightarrow \mathcal{H}$ be a single-valued mapping. If $F$ is monotone and hemicontinuous, then the set-valued mapping $F+N_{\mathcal{D}}:\mathcal{H}\rightsquigarrow \mathcal{H}$ is maximal monotone.
\end{lemma}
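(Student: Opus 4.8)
The plan is to verify monotonicity by inspection and then derive maximality from Minty's surjectivity criterion, which turns the question into the solvability of a strongly monotone variational inequality.

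First, $F+N_{\mathcal D}$ is monotone: $F$ is monotone by hypothesis; $N_{\mathcal D}$ is monotone because adding the two inequalities defining $\eta\in N_{\mathcal D}(u)$ and $\zeta\in N_{\mathcal D}(v)$ yields $\inner{\eta-\zeta}{u-v}_{\mathcal H}\ge 0$; and a sum of monotone maps is monotone. For maximality I would invoke the fact that a monotone $T:\mathcal H\rightsquigarrow\mathcal H$ is maximal monotone whenever $\mathrm{Im}(\mathrm{id}+T)=\mathcal H$: if $(x,y)$ is monotonically related to $\mbox{Gph}(T)$, choose $(\xi,\eta)\in\mbox{Gph}(T)$ with $\xi+\eta=x+y$; then $y-\eta=-(x-\xi)$, so $0\le\inner{y-\eta}{x-\xi}_{\mathcal H}=-\norm{x-\xi}_{\mathcal H}^{2}\le 0$, forcing $(x,y)=(\xi,\eta)\in\mbox{Gph}(T)$. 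Hence it suffices to show that for every $w\in\mathcal H$ the inclusion $w\in u+F(u)+N_{\mathcal D}(u)$ has a solution. By the definition of $N_{\mathcal D}$, this is equivalent to the variational inequality: find $u\in\mathcal D$ with $\inner{F(u)+u-w}{v-u}_{\mathcal H}\ge 0$ for all $v\in\mathcal D$.

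Next I would set $G(u):=F(u)+u-w$. This $G$ is again monotone and hemicontinuous (the affine map $u\mapsto u-w$ is continuous and monotone), and moreover strongly monotone, $\inner{G(u)-G(v)}{u-v}_{\mathcal H}\ge\norm{u-v}_{\mathcal H}^{2}$, hence coercive on $\mathcal D$. So the whole statement reduces to the classical existence theorem for a variational inequality driven by a coercive, monotone, hemicontinuous operator on a nonempty closed convex set. To prove that existence in a self-contained way I would: (i) use Minty's lemma --- by monotonicity any solution $u$ satisfies $\inner{G(v)}{v-u}_{\mathcal H}\ge 0$ for all $v\in\mathcal D$, and conversely, inserting $v_t=u+t(v'-u)$, letting $t\downarrow 0$, and using hemicontinuity recovers the original inequality, the gain being that this ``Minty solution set'' is convex and weakly closed; (ii) solve the inequality on each finite-dimensional section $\mathcal D\cap E$ by applying Brouwer's fixed point theorem to $v\mapsto\Pi_{\mathcal D\cap E}(v-G(v))$, truncating $\mathcal D$ to a large ball first if it is unbounded; (iii) use strong monotonicity/coercivity to bound these finite-dimensional solutions uniformly, so they all lie in one fixed closed ball; (iv) pass to a weak cluster point $u$ and use the characterization in (i) to conclude that $u$ solves the variational inequality on all of $\mathcal D$. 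Uniqueness would follow from strong monotonicity, though it is not needed here.

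The hard part is step (iv): since $G$ is only hemicontinuous, one cannot pass to the limit directly in $\inner{G(u_E)}{\,\cdot\,}_{\mathcal H}$, which is exactly why the Minty reformulation in (i) and the a priori bound in (iii) --- confining the approximants to a weakly compact ball --- are both indispensable; dealing with a possibly unbounded $\mathcal D$ via truncation is a further technical wrinkle. A shorter alternative is to observe that an everywhere-defined monotone hemicontinuous operator on $\mathcal H$ is automatically maximal monotone, that $N_{\mathcal D}$ is maximal monotone with domain $\mathcal D$, and that $\mathrm{int}(\dom F)=\mathcal H\supseteq\mathcal D=\dom N_{\mathcal D}$, so the interiority condition of Rockafellar's sum theorem for maximal monotone operators holds trivially and the conclusion follows; this route, however, merely relocates the difficulty into the cited sum theorem.
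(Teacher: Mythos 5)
This lemma is not proved in the paper at all: it is quoted directly from Rockafellar's sum theorem, \cite[Theorem 3]{Rockafellar70}, so there is no internal argument to compare with. Your write-up is therefore a genuine re-proof, and its skeleton --- monotonicity of $F+N_{\mathcal{D}}$ by inspection, maximality from Minty's criterion ($\Im(I+T)=\mathcal{H}$ implies $T$ maximal monotone, proved exactly as you do), reduction to a strongly monotone variational inequality, and existence via Minty's lemma, finite-dimensional sections, a coercivity bound, and a weak cluster point --- is the classical Browder--Hartman--Stampacchia route. What it buys over the paper's citation is self-containedness; what the citation buys is brevity, and your closing alternative through the maximal-monotone sum theorem is, as you yourself note, essentially the cited result and hence circular in this context.

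One step, as written, does not go through: in (ii) you apply Brouwer's theorem to $v\mapsto\Pi_{\mathcal{D}\cap E}(v-G(v))$, which requires $G$ (more precisely its compression $P_{E}\circ G$ to the section) to be norm-continuous on $\mathcal{D}\cap E$, and hemicontinuity only gives continuity along line segments --- in dimension two or more this is strictly weaker than continuity, so the fixed-point map is not known to be continuous. The gap is fillable, but it needs an ingredient your sketch never invokes: an everywhere-defined monotone operator on $\mathcal{H}$ is locally bounded (Rockafellar), and a locally bounded monotone hemicontinuous operator is demicontinuous, so its compression to a finite-dimensional subspace is continuous; alternatively, bypass Brouwer on $G$ entirely by running the KKM/Ky Fan argument on the Minty sets $\{u\in\mathcal{D}\cap E:\inner{G(v)}{v-u}_{\mathcal{H}}\geq 0\}$, whose KKM property follows from monotonicity alone, and only then use hemicontinuity through Minty's lemma. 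With that repair, together with the routine points you already flag (choosing $E$ to contain a fixed $v_{0}\in\mathcal{D}$, de-truncating the ball via the a priori bound $\norm{u_{E}-v_{0}}_{\mathcal{H}}\leq\norm{G(v_{0})}_{\mathcal{H}}$, and the finite-intersection-property passage to a weak cluster point before applying Minty's lemma), your argument is complete and correct.
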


Let $T:\mathcal{H}\rightsquigarrow \mathcal{H}$ be a  maximal monotone operator.  The resolvent  $J_{r}^{T}$ of $T$  with parameter $r>0$ is defined by
\begin{equation*}
J_{r}^{T}(u)=(I+rT)^{-1}(u),~~\forall~u\in\mathcal{H}.
\end{equation*}
The resolvent $J_{r}^{T}$ has the following basic properties, see \cite[Corollary 23.11]{Bauschke01}, \cite[Lemma 3.3]{Marino04} and \cite[Proposition 1]{Rockafellar76}.

\begin{proposition}\label{Convergence proposition} Let $T:\mathcal{H}\rightsquigarrow \mathcal{H}$ be a maximal monotone operator and $J_{r}^{T}$ be its resolvent with parameter $r>0$. Then
\begin{enumerate}[{\rm (i)}]
          \item  $J_{r}^{T}:\mathcal{H}\rightarrow\mathcal{H}$ is a single-valued  nonexpansive mapping;
          \item  $\norm{u-J_{r}^{T}(u)}_{\mathcal{H}}\leq 2\norm{u-J_{r'}^{T}(u)}_{\mathcal{H}}$ for all $0<r\leq r'$ and $u\in \mathcal{H}$;
          \item  $r^{-1}(I-J_{r}^{T})(u)\in T(J_{r}^{T}(u))$ for all $u\in \mathcal{H}$;
          \item  $\inner{J_{r}^{T}(u)-J_{r}^{T}(v)}{(I-J_{r}^{T})(u)-(I-J_{r}^{T})(v)}_{\mathcal{H}}\geq 0$ for all $u,v\in \mathcal{H}$;
          \item  $\norm{J_{r}^{T}(u)-J_{r}^{T}(v)}_{\mathcal{H}}^{2}
+\norm{(I-J_{r}^{T})(u)-(I-J_{r}^{T})(v)}_{\mathcal{H}}^{2}\leq \norm{u-v}_{\mathcal{H}}^{2}$ for all $u,v\in \mathcal{H}$;
          \item $\norm{(2J_{r}^{T}-I)(u)-(2J_{r}^{T}-I)(v)}_{\mathcal{H}}\leq \norm{u-v}_{\mathcal{H}}$ for all $u,v\in \mathcal{H}$.
        \end{enumerate}
\end{proposition}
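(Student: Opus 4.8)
The plan is to reduce all six items to two inputs: Minty's theorem, which guarantees that maximal monotonicity of $T$ makes $I+rT$ surjective onto $\mathcal{H}$ with a single-valued inverse, and the defining monotonicity inequality of $T$. First I would prove that $J_r^T$ is single-valued together with (iii): if $p_1,p_2\in (I+rT)^{-1}(u)$, then $r^{-1}(u-p_i)\in T(p_i)$, and monotonicity of $T$ gives $\inner{r^{-1}(u-p_1)-r^{-1}(u-p_2)}{p_1-p_2}_{\mathcal{H}}\ge 0$, i.e. $-r^{-1}\norm{p_1-p_2}_{\mathcal{H}}^{2}\ge 0$, hence $p_1=p_2$; that $J_r^T$ is defined on all of $\mathcal{H}$ is exactly Minty's theorem. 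Item (iii) is then immediate, since $p:=J_r^T(u)$ satisfies $u\in p+rT(p)$, so $r^{-1}(I-J_r^T)(u)=r^{-1}(u-p)\in T(p)=T(J_r^T(u))$.

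Next, for (iv) I would put $p=J_r^T(u)$, $q=J_r^T(v)$; by (iii) we have $r^{-1}(u-p)\in T(p)$ and $r^{-1}(v-q)\in T(q)$, so monotonicity of $T$ gives $\inner{(u-p)-(v-q)}{p-q}_{\mathcal{H}}\ge 0$, which is precisely (iv). From (iv) the remaining two inequalities are pure Hilbert-space algebra: writing $a=J_r^T(u)-J_r^T(v)$ and $b=(I-J_r^T)(u)-(I-J_r^T)(v)$, one has $a+b=u-v$ and $\inner{a}{b}_{\mathcal{H}}\ge 0$, so $\norm{u-v}_{\mathcal{H}}^{2}=\norm{a}_{\mathcal{H}}^{2}+2\inner{a}{b}_{\mathcal{H}}+\norm{b}_{\mathcal{H}}^{2}\ge\norm{a}_{\mathcal{H}}^{2}+\norm{b}_{\mathcal{H}}^{2}$, which is (v); in particular $\norm{a}_{\mathcal{H}}\le\norm{u-v}_{\mathcal{H}}$, the nonexpansiveness asserted in (i); and $\norm{(2J_r^T-I)(u)-(2J_r^T-I)(v)}_{\mathcal{H}}^{2}=\norm{2a-(a+b)}_{\mathcal{H}}^{2}=\norm{a-b}_{\mathcal{H}}^{2}\le\norm{a}_{\mathcal{H}}^{2}+2\inner{a}{b}_{\mathcal{H}}+\norm{b}_{\mathcal{H}}^{2}=\norm{u-v}_{\mathcal{H}}^{2}$, which is (vi).

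It remains to handle (ii). Fix $0<r\le r'$ and set $p'=J_{r'}^T(u)$, so that $r'^{-1}(u-p')\in T(p')$ by (iii). Define $v:=p'+\tfrac{r}{r'}(u-p')$; then $r^{-1}(v-p')=r'^{-1}(u-p')\in T(p')$, hence $p'=J_r^T(v)$. Nonexpansiveness of $J_r^T$ from (i) then gives $\norm{J_r^T(u)-p'}_{\mathcal{H}}=\norm{J_r^T(u)-J_r^T(v)}_{\mathcal{H}}\le\norm{u-v}_{\mathcal{H}}=(1-\tfrac{r}{r'})\norm{u-p'}_{\mathcal{H}}\le\norm{u-p'}_{\mathcal{H}}$, so by the triangle inequality $\norm{u-J_r^T(u)}_{\mathcal{H}}\le\norm{u-p'}_{\mathcal{H}}+\norm{p'-J_r^T(u)}_{\mathcal{H}}\le 2\norm{u-p'}_{\mathcal{H}}=2\norm{u-J_{r'}^T(u)}_{\mathcal{H}}$.

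The one genuinely nontrivial ingredient is Minty's surjectivity theorem (that $I+rT$ maps onto $\mathcal{H}$), which is what makes $J_r^T$ everywhere defined; once that is granted, every assertion reduces to the short computations above. Since the proposition is entirely standard, the most economical course in the paper is to cite the references already listed (\cite[Corollary 23.11]{Bauschke01}, \cite[Lemma 3.3]{Marino04}, \cite[Proposition 1]{Rockafellar76}) and, if desired, include only the two-line derivations of (ii) and of (v)–(vi) from monotonicity.
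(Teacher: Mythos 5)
Your proof is correct in every item: single-valuedness plus (iii) from monotonicity and Minty's theorem, (iv) from applying monotonicity to the two inclusions of (iii), (v), (i) and (vi) by the elementary identity $\norm{a\pm b}_{\mathcal{H}}^{2}=\norm{a}_{\mathcal{H}}^{2}\pm2\inner{a}{b}_{\mathcal{H}}+\norm{b}_{\mathcal{H}}^{2}$ with $a+b=u-v$ and $\inner{a}{b}_{\mathcal{H}}\ge 0$, and (ii) via the resolvent-identity trick $v=p'+\tfrac{r}{r'}(u-p')$ giving $J_{r}^{T}(v)=J_{r'}^{T}(u)$ followed by nonexpansiveness and the triangle inequality. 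The only point of comparison is that the paper gives no proof at all: it treats the proposition as standard and cites \cite[Corollary 23.11]{Bauschke01}, \cite[Lemma 3.3]{Marino04} and \cite[Proposition 1]{Rockafellar76}, which is exactly the option you flag at the end as the most economical; your argument is the standard one underlying those references, so there is no substantive divergence, only the choice between citing and spelling out the (correct) two-line derivations.
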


Clearly, $0\in T(u)$  if and only if $u$ is a fixed point of $J_{r}^{T}$. By the properties of the maximal monotone operator, it is easy to check that the set of zeros $T^{-1}(0)$ for a maximal monotone operator $T$ is a closed convex subset of $\mathcal{H}$ (see, i.g., \cite[Proposition 23.39]{Bauschke01}).

Let $\mathcal{H}_{1}$ and $\mathcal{H}_{2}$ be mutually complementary subspaces of $\mathcal{H}$, i.e., $\mathcal{H}_{1}=\mathcal{H}_{2}^{\bot}$ and $\mathcal{H}_{2}=\mathcal{H}_{1}^{\bot}$.

\begin{definition}\cite{Spingarn1983}\label{the partial inverse}
Let  $T:\mathcal{H}\rightsquigarrow \mathcal{H}$ be a monotone operator. The set
\begin{equation}\label{partial inverse}
\big\{(P_{\mathcal{H}_{1}}(u)+P_{\mathcal{H}_{2}}(\xi),P_{\mathcal{H}_{1}}(\xi)+P_{\mathcal{H}_{2}}(u))\in \mathcal{H}\times \mathcal{H}~\big|~(u,\xi)\in\mathcal{H}\times\mathcal{H}, ~\xi\in T(u)\big\}
\end{equation}
is the graph of a monotone operator. The monotone operator whose graph is described in this manner is denoted by $T_{\mathcal{H}_{1}}^{-1}$ and is called the partial inverse of $T$ with respect to $\mathcal{H}_{1}$.
\end{definition}

It follows from the definition of partial inverse that, the partial inverse
of $T$ with respect to $\mathcal{H}_{1}=\{0\}$ is the ordinary inverse of $T$ and the partial inverse with respect to $\mathcal{H}_{1}=\mathcal{H}$ is just $T$ itself. In addition, for any closed subspace $\mathcal{H}_{1}$, the double  partial inverse of $T$ with respect to $\mathcal{H}_{1}=\{0\}$ is $T$ itself, i.e., $(T _{\mathcal{H}_{1}})_{\mathcal{H}_{1}}=T$.

\begin{lemma}\cite[Proposition 2.1]{Spingarn1983}\label{TN is maximal monotone}
The set-valued mapping $T_{\mathcal{H}_{1}}^{-1}:\mathcal{H}\rightsquigarrow \mathcal{H}$ is (maximal) monotone if and only if the set-valued mapping $T:\mathcal{H}\rightsquigarrow \mathcal{H}$ is (maximal) monotone.
\end{lemma}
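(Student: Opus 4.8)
\emph{Proof proposal.} The plan is to realize the partial inverse as the image of $\mathrm{Gph}(T)$ under an explicit linear involution of $\mathcal{H}\times\mathcal{H}$ that preserves the monotonicity pairing, and then to read off both the monotonicity and the maximality equivalences from this single observation. First I would introduce the linear map $\Phi:\mathcal{H}\times\mathcal{H}\to\mathcal{H}\times\mathcal{H}$ defined by
\begin{equation*}
\Phi(u,\xi):=\big(P_{\mathcal{H}_{1}}(u)+P_{\mathcal{H}_{2}}(\xi),\ P_{\mathcal{H}_{1}}(\xi)+P_{\mathcal{H}_{2}}(u)\big).
\end{equation*}
By the very form of the set in \eqref{partial inverse}, one has $\mathrm{Gph}(T_{\mathcal{H}_{1}}^{-1})=\Phi(\mathrm{Gph}(T))$. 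Using $P_{\mathcal{H}_{1}}+P_{\mathcal{H}_{2}}=I$, $P_{\mathcal{H}_{i}}P_{\mathcal{H}_{i}}=P_{\mathcal{H}_{i}}$, and $P_{\mathcal{H}_{1}}P_{\mathcal{H}_{2}}=P_{\mathcal{H}_{2}}P_{\mathcal{H}_{1}}=0$, a direct computation shows $\Phi\circ\Phi=\mathrm{Id}$; in particular $\Phi$ is a linear bijection with $\Phi^{-1}=\Phi$.

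The second step is the key identity: for arbitrary $(u,\xi),(v,\zeta)\in\mathcal{H}\times\mathcal{H}$, writing $(u',\xi')=\Phi(u,\xi)$ and $(v',\zeta')=\Phi(v,\zeta)$, I claim $\inner{\xi'-\zeta'}{u'-v'}_{\mathcal{H}}=\inner{\xi-\zeta}{u-v}_{\mathcal{H}}$. This follows by expanding both sides in terms of $P_{\mathcal{H}_{1}}(u-v)$, $P_{\mathcal{H}_{2}}(u-v)$, $P_{\mathcal{H}_{1}}(\xi-\zeta)$, $P_{\mathcal{H}_{2}}(\xi-\zeta)$; since $\mathcal{H}_{1}\perp\mathcal{H}_{2}$, every cross term vanishes and both sides reduce to $\inner{P_{\mathcal{H}_{1}}(\xi-\zeta)}{P_{\mathcal{H}_{1}}(u-v)}_{\mathcal{H}}+\inner{P_{\mathcal{H}_{2}}(\xi-\zeta)}{P_{\mathcal{H}_{2}}(u-v)}_{\mathcal{H}}$.

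Finally I would conclude. The identity shows that a subset $G\subseteq\mathcal{H}\times\mathcal{H}$ satisfies the monotonicity inequality of Definition~\ref{monotone} if and only if $\Phi(G)$ does; since $\Phi$ is a bijection this proves that $T$ is monotone if and only if $T_{\mathcal{H}_{1}}^{-1}$ is monotone. For the maximality statement, suppose $T$ is maximal monotone and let $G'$ be a monotone set with $\mathrm{Gph}(T_{\mathcal{H}_{1}}^{-1})\subseteq G'$. Then $\Phi(G')$ is monotone and contains $\Phi(\mathrm{Gph}(T_{\mathcal{H}_{1}}^{-1}))=\mathrm{Gph}(T)$, so maximality of $T$ forces $\Phi(G')=\mathrm{Gph}(T)$; applying $\Phi$ once more yields $G'=\mathrm{Gph}(T_{\mathcal{H}_{1}}^{-1})$, so $T_{\mathcal{H}_{1}}^{-1}$ is maximal monotone. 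The converse follows by the same argument with the roles of $T$ and $T_{\mathcal{H}_{1}}^{-1}$ interchanged, using $\Phi^{-1}=\Phi$ together with the double-partial-inverse identity noted after Definition~\ref{the partial inverse}.

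The argument is essentially self-contained; the only places needing care are the bookkeeping with the orthogonal-projection identities in the first two steps and, in the maximality part, using the correspondence $G\leftrightarrow\Phi(G)$ as an order isomorphism on the family of monotone subsets of $\mathcal{H}\times\mathcal{H}$. I do not anticipate a serious obstacle.
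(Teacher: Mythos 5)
Your proof is correct: the involution $\Phi$ does satisfy $\Phi\circ\Phi=\mathrm{Id}$, it maps $\mathrm{Gph}(T)$ onto $\mathrm{Gph}(T_{\mathcal{H}_{1}}^{-1})$, and the orthogonality $\mathcal{H}_{1}\perp\mathcal{H}_{2}$ indeed makes it preserve the monotonicity pairing, from which both the monotonicity and the maximality equivalences follow exactly as you argue. The paper itself gives no proof of this lemma (it cites Spingarn's Proposition 2.1), and your graph-involution argument is precisely the standard one behind that citation, so nothing needs to be added.
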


To end this subsection, we list the following three elementary results.

\begin{lemma} \cite[Corollary 4.28]{Bauschke01}  \label{weakly-strongly closed}
Let $\mathcal{D}$ be a nonempty closed convex subset of $\mathcal{H}$,  $F:\mathcal{D}\rightarrow\mathcal{H}$ be a nonexpansive mapping,  $\{u^{k}\}_{k=0}^{\infty}$ be a sequence in $\mathcal{D}$ and,  $u$ be a point in $\mathcal{H}$. Suppose that $u^{k}\rightharpoonup u$ and $u^{k}-F(u^{k})\rightarrow 0$. Then $u\in Fix(F)$, where $Fix(F):=\{u\in\mathcal{H}~|~F(u)=u\}$.
\end{lemma}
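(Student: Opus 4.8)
The plan is to establish the classical demiclosedness principle for nonexpansive maps by a direct Hilbert-space expansion, exploiting that a weakly convergent sequence is bounded and that the weak and strong limits interact nicely with inner products.

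First I would check that $u\in\mathcal{D}$, so that $F(u)$ makes sense: since $\mathcal{D}$ is nonempty closed and convex it is weakly sequentially closed, and $u^{k}\in\mathcal{D}$ with $u^{k}\rightharpoonup u$ forces $u\in\mathcal{D}$. Next, writing $r^{k}:=u^{k}-F(u^{k})$, the hypothesis is $r^{k}\rightarrow 0$. Nonexpansiveness of $F$ gives $\norm{F(u^{k})-F(u)}_{\mathcal{H}}^{2}\leq\norm{u^{k}-u}_{\mathcal{H}}^{2}$ for every $k$. I would then expand the left-hand side using $F(u^{k})=u^{k}-r^{k}$:
\begin{equation*}
\norm{F(u^{k})-F(u)}_{\mathcal{H}}^{2}=\norm{(u^{k}-u)+(u-F(u))-r^{k}}_{\mathcal{H}}^{2}
=\norm{u^{k}-u}_{\mathcal{H}}^{2}+\norm{(u-F(u))-r^{k}}_{\mathcal{H}}^{2}+2\inner{u^{k}-u}{(u-F(u))-r^{k}}_{\mathcal{H}}.
\end{equation*}
Combining with the nonexpansiveness inequality and cancelling $\norm{u^{k}-u}_{\mathcal{H}}^{2}$ yields, for all $k$,
\begin{equation*}
\norm{(u-F(u))-r^{k}}_{\mathcal{H}}^{2}+2\inner{u^{k}-u}{(u-F(u))-r^{k}}_{\mathcal{H}}\leq 0.
\end{equation*}

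Finally I would pass to the limit $k\to\infty$. Since $r^{k}\rightarrow 0$, the first term tends to $\norm{u-F(u)}_{\mathcal{H}}^{2}$. For the cross term, split it as $\inner{u^{k}-u}{u-F(u)}_{\mathcal{H}}-\inner{u^{k}-u}{r^{k}}_{\mathcal{H}}$; the first summand tends to $0$ because $u^{k}-u\rightharpoonup 0$ and $u-F(u)$ is fixed, while the second is bounded in absolute value by $\norm{u^{k}-u}_{\mathcal{H}}\norm{r^{k}}_{\mathcal{H}}$, which tends to $0$ since $\{u^{k}\}_{k=0}^{\infty}$ is bounded (being weakly convergent) and $r^{k}\rightarrow 0$. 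Hence $\norm{u-F(u)}_{\mathcal{H}}^{2}\leq 0$, so $F(u)=u$ and $u\in Fix(F)$.

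The argument is essentially routine; the only point that requires a little care—and what I would treat as the "main obstacle"—is correctly handling the mixed weak/strong convergence when passing to the limit, in particular invoking boundedness of $\{u^{k}\}_{k=0}^{\infty}$ to dispose of the term $\inner{u^{k}-u}{r^{k}}_{\mathcal{H}}$, and noting at the outset that $u$ indeed lies in $\mathcal{D}$ so that $F(u)$ is defined.
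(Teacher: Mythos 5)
Your proof is correct: the weak sequential closedness of the closed convex set $\mathcal{D}$ guarantees $F(u)$ is defined, the expansion of $\norm{F(u^{k})-F(u)}_{\mathcal{H}}^{2}$ combined with nonexpansiveness is valid, and the limit passage correctly uses boundedness of the weakly convergent sequence to kill the mixed term. The paper gives no proof of this lemma (it simply cites Corollary 4.28 of Bauschke--Combettes), and your argument is essentially the standard demiclosedness computation found in that reference, so nothing further is needed.
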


\begin{lemma}\label{Norm proposition}\cite[Corollary 2.15]{Bauschke01}
Let $x,y\in\mathcal{H}$ and $t\in \mathbb{R}$. Then
\begin{enumerate}[{\rm (i)}]
\item $\norm{x+y}_{\mathcal{H}}^{2}\leq\norm{x}_{\mathcal{H}}^{2}+2\inner{y}{x+y}_{\mathcal{H}}$;
\item $\norm{tx+(1-t)y}_{\mathcal{H}}^{2}=t\norm{x}_{\mathcal{H}}^{2}+(1-t)\norm{y}_{\mathcal{H}}^{2}-t(1-t)\norm{x-y}_{\mathcal{H}}^{2}$.
\end{enumerate}
\end{lemma}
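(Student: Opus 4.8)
The plan is to derive both identities by direct computation from the single fact that $\norm{z}_{\mathcal{H}}^{2}=\inner{z}{z}_{\mathcal{H}}$, using only the bilinearity and symmetry of the inner product; no structure of $\mathcal{H}$ beyond this enters.

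For part (i), I would work with the difference of the two sides. Expanding $\norm{x+y}_{\mathcal{H}}^{2}=\inner{x}{x+y}_{\mathcal{H}}+\inner{y}{x+y}_{\mathcal{H}}$ and substituting, the quantity $\norm{x}_{\mathcal{H}}^{2}+2\inner{y}{x+y}_{\mathcal{H}}-\norm{x+y}_{\mathcal{H}}^{2}$ collapses to $\norm{x}_{\mathcal{H}}^{2}+\inner{y-x}{y+x}_{\mathcal{H}}$, and the identity $\inner{y-x}{y+x}_{\mathcal{H}}=\norm{y}_{\mathcal{H}}^{2}-\norm{x}_{\mathcal{H}}^{2}$ reduces this to $\norm{y}_{\mathcal{H}}^{2}\geq 0$. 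This proves the claimed inequality, and in passing shows that the gap between the two sides is exactly $\norm{y}_{\mathcal{H}}^{2}$.

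For part (ii), I would expand the left-hand side as $t^{2}\norm{x}_{\mathcal{H}}^{2}+2t(1-t)\inner{x}{y}_{\mathcal{H}}+(1-t)^{2}\norm{y}_{\mathcal{H}}^{2}$, rewrite the right-hand side using $\norm{x-y}_{\mathcal{H}}^{2}=\norm{x}_{\mathcal{H}}^{2}-2\inner{x}{y}_{\mathcal{H}}+\norm{y}_{\mathcal{H}}^{2}$, and then compare the coefficients of $\norm{x}_{\mathcal{H}}^{2}$, $\norm{y}_{\mathcal{H}}^{2}$ and $\inner{x}{y}_{\mathcal{H}}$ on the two sides. For instance, the coefficient of $\norm{x}_{\mathcal{H}}^{2}$ on the right equals $t-t(1-t)=t^{2}$, which matches the left-hand side, and the coefficients of $\norm{y}_{\mathcal{H}}^{2}$ and of $\inner{x}{y}_{\mathcal{H}}$ match by the same one-line computation.

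I do not expect any genuine obstacle: the argument is elementary algebra in an inner-product space, and the only thing requiring a little care is tracking the coefficients in part (ii). Since the statement is quoted from \cite[Corollary 2.15]{Bauschke01}, in the paper it in fact suffices to cite the reference; the short derivation above is included only for completeness.
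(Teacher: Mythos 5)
Your verification is correct: both computations are accurate elementary consequences of bilinearity and symmetry of the inner product (indeed your part (i) argument shows the gap is exactly $\norm{y}_{\mathcal{H}}^{2}$, and the coefficient matching in part (ii) checks out). The paper itself gives no proof but simply cites \cite[Corollary 2.15]{Bauschke01}, exactly as you note, so your short derivation is consistent with (and a complete substitute for) the paper's treatment.
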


\begin{lemma}\cite[Lemma 2.5]{Xu02}\label{Strong convergence}
Let $\{t_{k}\}_{k=0}^{\infty}$ be a sequence of non-negative real numbers such that
\begin{equation*}
t_{k+1}\leq (1-\alpha_{k})t_{k}+\alpha_{k}\gamma_{k}+\rho_{k},~~k\geq 0,
\end{equation*}
where $\{\alpha_{k}\}_{k=0}^{\infty}$, $\{\gamma_{k}\}_{k=0}^{\infty}$ and $\{\rho_{k}\}_{k=0}^{\infty}$ satisfy
\begin{enumerate}[{\rm (i)}]
  \item $\{\alpha_{k}\}_{k=0}^{\infty}\subseteq [0,1]$, $\sum\limits_{k=0}^{\infty}\alpha_{k}=\infty$, or equivalently, $\prod\limits_{k=0}^{\infty}(1-\alpha_{k})=0$;
  \item $\limsup\limits_{k\rightarrow \infty}\gamma_{k}\leq 0$;
  \item $\rho_{k}\geq 0(k\geq 0)$, $\sum\limits_{k=0}^{\infty}\rho_{k}<\infty$.
\end{enumerate}
Then $\lim\limits_{k\rightarrow \infty}t_{k}=0$.
\end{lemma}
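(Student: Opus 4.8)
The plan is a standard ``$\varepsilon$ of room'' argument that exploits the contraction-with-perturbation structure of the recursion. First I would fix an arbitrary $\varepsilon>0$. By hypothesis (ii) there is an index $N_{1}$ with $\gamma_{k}\le\varepsilon$ for all $k\ge N_{1}$, and by hypothesis (iii) there is $N_{2}$ with $\sum_{j\ge N_{2}}\rho_{j}\le\varepsilon$. Setting $N:=\max\{N_{1},N_{2}\}$, the given recursion yields, for every $k\ge N$,
\begin{equation*}
t_{k+1}\le(1-\alpha_{k})t_{k}+\alpha_{k}\varepsilon+\rho_{k}.
\end{equation*}

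Next I would unroll this inequality from $N$ up to an arbitrary $k\ge N$. Introducing the partial products $\Pi_{j,k}:=\prod_{i=j+1}^{k}(1-\alpha_{i})$ (with the convention $\Pi_{k,k}=1$), a short induction on $k$ gives
\begin{equation*}
t_{k+1}\le\Pi_{N-1,k}\,t_{N}+\varepsilon\sum_{j=N}^{k}\alpha_{j}\Pi_{j,k}+\sum_{j=N}^{k}\rho_{j}\Pi_{j,k}.
\end{equation*}
The middle sum collapses via the elementary identity $\alpha_{j}\Pi_{j,k}=\Pi_{j,k}-\Pi_{j-1,k}$, so that $\sum_{j=N}^{k}\alpha_{j}\Pi_{j,k}=1-\Pi_{N-1,k}\le 1$; the last sum is at most $\sum_{j\ge N}\rho_{j}\le\varepsilon$, since every factor $1-\alpha_{i}$ lies in $[0,1]$ and hence $\Pi_{j,k}\in[0,1]$. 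Therefore $t_{k+1}\le\Pi_{N-1,k}\,t_{N}+2\varepsilon$ for all $k\ge N$.

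Finally I would use the equivalence recorded in hypothesis (i): since $\sum_{k}\alpha_{k}=\infty$ forces $\prod_{i=N}^{\infty}(1-\alpha_{i})=0$, the quantity $\Pi_{N-1,k}=\prod_{i=N}^{k}(1-\alpha_{i})$ tends to $0$ as $k\to\infty$, so there is $N_{3}\ge N$ with $\Pi_{N-1,k}\,t_{N}\le\varepsilon$ whenever $k\ge N_{3}$. Combining the two estimates, $0\le t_{k+1}\le 3\varepsilon$ for all $k\ge N_{3}$, whence $\limsup_{k\to\infty}t_{k}\le 3\varepsilon$. Since $\varepsilon>0$ was arbitrary and each $t_{k}$ is non-negative, this forces $\lim_{k\to\infty}t_{k}=0$.

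I do not anticipate a genuine obstacle. The only points requiring a little care are the bookkeeping in the telescoping identity $\alpha_{j}\Pi_{j,k}=\Pi_{j,k}-\Pi_{j-1,k}$ and the observation that deleting the finitely many initial factors $1-\alpha_{0},\dots,1-\alpha_{N-1}$ does not affect whether the infinite product vanishes, so that $\prod_{i=N}^{\infty}(1-\alpha_{i})=0$ does follow from $\sum_{k}\alpha_{k}=\infty$; everything else is routine.
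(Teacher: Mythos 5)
Your argument is correct and complete: the eventual bound $\gamma_{k}\le\varepsilon$, the unrolled estimate with the partial products $\Pi_{j,k}$, the telescoping identity $\alpha_{j}\Pi_{j,k}=\Pi_{j,k}-\Pi_{j-1,k}$, the bound $\sum_{j\ge N}\rho_{j}\Pi_{j,k}\le\varepsilon$, and the vanishing of the tail product via $\sum_{k}\alpha_{k}=\infty$ (using $1-x\le e^{-x}$, with finitely many deleted factors being harmless) all check out, yielding $\limsup_{k\to\infty}t_{k}\le 3\varepsilon$ for every $\varepsilon>0$. Note that the paper itself gives no proof of this lemma --- it is quoted verbatim from Xu \cite{Xu02} --- and your unrolling/telescoping argument is essentially the standard proof of that cited result, so there is nothing to reconcile.
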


\subsection{Multi-stage $\mbox{SVIs}$ in General Probability Spaces}

In this subsection, we first introduce some basic notations and concepts in probability. Then, we give the definition of multi-stage $\mbox{SVIs}$ in general probability spaces. We refer the readers to \cite{Rockafellar17} and \cite{Zhang22} for more details about the models of the Multi-stage $\mbox{SVIs}$.

Let $(\Omega,\mathscr{F},\mathbb{P})$ be a complete probability space. Here $\Omega$ is a sample space, $\mathscr{F}$ is a $\sigma$-algebra on $\Omega$, and $\mathbb{P}$ is a probability measure defined on $(\Omega,\mathscr{F})$. Any element of $\Omega$, denoted by $\omega$, is called a sample point.  As usual, when the context is clear, we omit the $\omega$ ($\in \Omega$) argument in the defined mappings/functions. Denote by $\mathscr{O}$ the collection of all $\mathbb{P}$-null sets. We say that a property holds almost surely (a.s.) if there is a set $A\in \mathscr{O}$ such that the property holds for every $\omega\in \Omega\setminus A$. Let $R^{n}~(n\in \mathds{N})$ be an n-dimensional Euclidean space with Borel $\sigma$-algebra $\mathscr{B}(R^{n})$. The symbols $|\cdot|$ and $\inner{\cdot}{\cdot}$ are used to denote the norm and inner product in $R^{n}$, respectively. A mapping $\xi:\Omega\rightarrow R^{n}$ is called an $\mathscr{F}$-measurable random vector if
\begin{equation*}
\xi^{-1}(A):=\big\{\omega\in\Omega~\big|~\xi(\omega)\in A\big\}\in \mathscr{F},~\forall~A\in\mathscr{B}(R^{n}).
\end{equation*}
When a random vector $\xi$ is integrable with respect to the probability measure $\mathbb{P}$, we will use  $\mathds{E}~\xi=\int_{\Omega}\xi(\omega)\mathbb{P}(d\omega)$ to denote the expectation of $\xi$. Denote by $\mathcal{L}^2$ the Hilbert space of all the square-integrable random vectors taking values in $R^{n}$, i.e.,
\begin{equation}\label{L2definition}
\mathcal{L}^{2}:=\big\{\xi:\Omega\rightarrow R^{n}~\big|~\xi\mbox{ is $\mathscr{F}$-measurable and } \mathds{E}|\xi|^{2}< +\infty\big\}.
\end{equation}
For any random vectors $\xi$, $\eta\in \mathcal{L}^{2}$, the norm of $\xi$ is defined by $$\norm{\xi}_{\mathcal{L}^{2}}:= \big[\mathds{E}|\xi|^{2}\big]^{\frac{1}{2}}= \Bigg[\int_{\Omega}|\xi(\omega)|^{2}\mathbb{P}(d\omega)\Bigg]^{\frac{1}{2}},$$
and the inner product of $\xi$ and $\eta$ is defined by $$\inner{\xi}{\eta}_{\mathcal{L}^{2}}:= \mathds{E}\inner{\xi}{\eta}= \int_{\Omega}\inner{\xi(\omega)}{\eta(\omega)}\mathbb{P}(d\omega).$$

\begin{definition}\label{Conditional expectation}\cite[Definition 1, Page 213]{Shiryaev16}
Let $\xi:\Omega\rightarrow R^{n}$ be an $\mathscr{F}$-measurable random vector with $\mathds{E}|\xi|<+\infty$ and $\mathscr{G}$ be a sub-$\sigma$-algebra of $\mathscr{F}$. The conditional expectation of $\xi$ with respect to $\mathscr{G}$, denoted by $\mathds{E}[\xi|\mathscr{G}]$, is a random vector such that
\begin{enumerate}[{\rm (i)}]
\item $\mathds{E}[\xi|\mathscr{G}]:\Omega\rightarrow R^{n}$ is $\mathscr{G}$-measurable;
\item $\int_{A}\xi(\omega)\mathbb{P}(d\omega)=\int_{A}\mathds{E}[\xi|\mathscr{G}](\omega)\mathbb{P}(d\omega),~\forall~ A\in \mathscr{G}.$
\end{enumerate}
\end{definition}

Let $n_{0},n_{1},\ldots,n_{N-1}\in \mathds{N}$, $n_{0}+n_{1}+\cdots+n_{N-1}=n$, $\mathscr{F}_{i}, i=0,1,\ldots,N-1$ be a collection of sub-$\sigma$-algebras such that $\mathscr{O}\subset\mathscr{F}_{0}\subset\mathscr{F}_{1}\subset\cdots\subset\mathscr{F}_{N-1}\subset \mathscr{F}$. Define the nonanticipativity closed linear subspace $\mathcal{N}$ of $\mathcal{L}^{2}$ by
\begin{equation}\label{Nonanticipativity set2}
\mathcal{N}\!:=\!\big\{x=(x_{0},x_{1},\ldots,x_{N-1})\in \mathcal{L}^{2}\,\big|\,x_{i}:\Omega\rightarrow R^{n_{i}}~\mbox{is}  ~\mathscr{F}_{i}\mbox{-measurable},~\forall~ i\!=\!0,1,\ldots,N\!-\!1\big\}.
\end{equation}
By the properties of conditional expectation, the orthogonal projection of a random vector $x\in \mathcal{L}^{2}$  onto $\mathcal{N}$ can be represented by
\begin{equation}\label{orthogonal proj N}
P_{\mathcal{N}}(x)=\Big\{y=(y_{0},y_{1},...,y_{N-1})\in \mathcal{N} \ \Big|\ y_{i}=\mathds{E} \big[x_{i} | \mathscr{F}_{i}\big]  \mbox{ a.s.,}\ \forall\ i= 0,1,\ldots,N-1\Big\},
\end{equation}
and, the orthogonal complementary subspace of $\mathcal{N}$ in $\mathcal{L}^2$ is characterized  by
\begin{equation}\label{RockanonantispaceM}
\mathcal{M}=\Big\{y=(y_{0},y_{1},...,y_{N-1})\in \mathcal{L}^2 \ \Big| \  \mathds{E} \big[y_{i} | \mathscr{F}_{i}\big]=0 \ \mbox{a.s.,}\ \forall\ i= 0,1,\ldots,N-1\Big\}.
\end{equation}

Let $\mathcal{C}$ be a nonempty closed convex subset of $\mathcal{L}^2$ defined  by
\begin{equation}\label{constraintsetC}
\mathcal{C}:=\Big\{x=(x_{0},x_{1},...,x_{N-1})\in \mathcal{L}^2\ \Big|\ x_{i}(\omega)\in C_{i}(\omega)  \mbox{ a.s. } \omega\in\Omega,  \forall~i= 0,1,\ldots,N-1 \Big\},
\end{equation}
with  $C_{i}:\Omega\rightsquigarrow R^{n_{i}}$ being given $\mathscr{F_{i}}$-measurable set-valued mapping  with   nonempty closed convex values, $\ i= 0,1,\ldots,N-1$.

Define $C:\Omega\rightsquigarrow R^n$  by
$C(\omega)=C_{0}(\omega)\times C_{1}(\omega)\times\cdots\times C_{N-1}(\omega),  a.s. \ \omega\in \Omega$. It is clear that, $\mathcal{C}$ is the set of square-integrable selections of the set valued mapping $C:\Omega\rightsquigarrow R^n$ and assumed to be nonempty. Some mild conditions for the nonemptiness of such  square-integrable selection can be found in \cite{Chen23}.

Let $F: \mathcal{L}^2  \to \mathcal{L}^2$ be a given mapping. The multi-stage stochastic variational inequality discussed in this paper, denoted by $\mbox{MSVI}(F,\mathcal{C}\cap\mathcal{N})$,  is to find  $x^{\ast}\in\mathcal{C}\cap\mathcal{N}$ such that
\begin{equation}\label{SVI}
-F(x^{\ast})\in N_{\mathcal{C}\cap \mathcal{N}}(x^{\ast}),
\end{equation}
where $N_{\mathcal{C}\cap \mathcal{N}}(x^{\ast})$ is the normal cone of $\mathcal{C}\cap \mathcal{N}$ on $x^{\ast}$ in $\mathcal{L}^2$.

When the sum  rule
\begin{equation}\label{sum rule}
N_{\mathcal{C} \cap\mathcal{N}}(x)=N_{\mathcal{C}}(x)+N_{\mathcal{N}}(x), ~ \forall \ x\in\mathcal{C} \cap\mathcal{N}
\end{equation}
is satisfied, the multi-stage stochastic variational inequality $\mbox{MSVI}(F,\mathcal{C}\cap\mathcal{N})$ \eqref{SVI} is equivalent to the multi-stage stochastic variational inequality in extensive form: Find  $x^*\in\mathcal{C}$ and $y^*\in \mathcal{M}$ such that
\begin{equation}\label{SVIextensivedefini}
-F(x^*)-y^*\in N_{\mathcal{C}}(x^*).
\end{equation}

Clearly, any solution of the multi-stage stochastic variational inequality in extensive form \eqref{SVIextensivedefini} is a solution of $\mbox{MSVI}(F,\mathcal{C}\cap\mathcal{N})$ \eqref{SVI}. When the sum rule \eqref{sum rule} holds, a solution of \eqref{SVI} is also a solution of  \eqref{SVIextensivedefini}. Some sufficient conditions for the sum rule \eqref{sum rule} can be found in \cite[Theorem 3.2]{Rockafellar17} and \cite[Theorem 2.1]{Zhang22}.

\section{Algorithm and Convergence Analysis}

In this section,  we shall propose a Halpern-type relaxed inertial inexact $\mbox{PHA}$ for solving $\mbox{MSVI}(F,\mathcal{C}\cap\mathcal{N})$ \eqref{SVI} and prove that the sequence generated by the algorithm converges strongly in $\mathcal{L}^{2}$ to a solution of $\mbox{MSVI}(F,\mathcal{C}\cap\mathcal{N})$ \eqref{SVI}. We assume

\begin{enumerate}
\item[{(A1)}] The solution set $\mbox{SOL}(F,\mathcal{C}\cap\mathcal{N} )$ of the multi-stage $\mbox{SVI}$ in extensive form \eqref{SVIextensivedefini} is nonempty;
\item[{(A2)}] The mapping $F:\mathcal{L}^{2}\rightarrow \mathcal{L}^{2}$ is monotone and Lipschitz continuous with constant $L_{F}>0$, i.e., for all $x,y\in \mathcal{L}^{2}$,
$\inner{F(x)-F(y)}{x-y}_{\mathcal{L}^{2}}\geq 0$
and
$\norm{F(x)-F(y)}_{\mathcal{L}^{2}}\leq L_{F}\norm{x-y}_{\mathcal{L}^{2}}$.

\end{enumerate}

To clearly illustrate the key idea of our algorithm, let us first make a brief review of the $\mbox{PHA}$ proposed by Rockafellar and Sun in \cite{Rockafellar19}. Let $r>0$ and choose arbitrarily $x^{0}\in \mathcal{N}$ and $y^{0}\in \mathcal{M}$, in the $\mbox{PHA}$, $x^{k}$ and $y^{k}$  are updated by
\begin{equation}\label{PHA}
\left\{
\begin{array}{ll}
-F(\tilde{x}^{k})(\omega)-y^{k}(\omega)+r(x^{k}(\omega)-\tilde{x}^{k}(\omega))\in N_{C(\omega)}(\tilde{x}^{k}(\omega))\ \mbox{a.s.}~\omega\in\Omega, \\
x^{k+1}=P_{\mathcal{N}}(\tilde{x}^{k}), \\
y^{k+1}=y^{k}+rP_{\mathcal{M}}(\tilde{x}^{k}).
\end{array}
\right.
\end{equation}
Let $T$ be the partial inverse of $F+N_{\mathcal{C}}$ with respect to $\mathcal{N}$, i.e.,
\begin{equation}\label{equivalent mapping}
y\in T(x)\Leftrightarrow P_{\mathcal{N}}(y)+P_{\mathcal{M}}(x)\in [F+N_{\mathcal{C}}](P_{\mathcal{N}}(x)+P_{\mathcal{M}}(y)), \ \forall \ (x,y)\in \mathcal{L}^2\times \mathcal{L}^2.
\end{equation}
By Lemma \ref{F and Nk} and condition (A2), we obtain that $F+N_{\mathcal{C}}$ is maximal monotone. Then, by Lemma \ref{TN is maximal monotone},  $T$ is maximal monotone. Define the rescaling mapping $A$ by
\begin{equation}\label{A}
A(z)=P_{\mathcal{N}}(z)+rP_{\mathcal{M}}(z) ,\  \forall~z\in\mathcal{L}^{2}.
\end{equation}
Clearly, $A$ is a symmetric invertible linear operator.  By Lemma~\ref{maxi ATA}, the mapping $ATA$ is also maximal monotone.  It is proved in \cite{Rockafellar19} that the $\mbox{PHA}$~\eqref{PHA} for $\mbox{MSVI}(F,\mathcal{C}\cap\mathcal{N})$ \eqref{SVI} is equivalent to applying the $\mbox{PPA}$ to find zeros of the maximal monotone operator $ATA$.
Exactly, let $u^0=x^0-r^{-1}y^0$ and define the iteration
\begin{equation} \label{PPA}
u^{k+1}=J_{r^{-1}}^{ATA}(u^{k}),
\end{equation}
we have that the sequence $\{(x^{k},y^{k})\}_{k=0}^{\infty}$ generated by $\mbox{PHA}$ \eqref{PHA} can be represented by
$$x^{k}=P_{\mathcal{N}}(A(u^{k})),\quad y^k=-P_{\mathcal{M}}(A(u^{k})), \  \forall~k=0,1,\ldots .$$

Note that, in $\mbox{PHA}$~\eqref{PHA}, it is difficult to compute exactly the intermediate iteration vector  $\tilde{x}^{k}$. From the perspective of numerical calculation, some modified $\mbox{PHAs}$ which allow  for inexact calculation of intermediate iteration vector $\tilde{x}^{k}$ are much easier to implement. On the other hand, by the equivalence between the $\mbox{PHA}$ and the $\mbox{PPA}$, some accelerated approaches for $\mbox{PPA}$ can be used to modify the $\mbox{PHA}$ to make the algorithm converge faster. In order to allow the intermediate iteration vector $\tilde{x}^{k}$ to be solved inexactly and accelerate the algorithm, in what follows, we propose the Halpern-type relaxed inertial inexact $\mbox{PHA}$ for $\mbox{MSVI}(F,\mathcal{C}\cap\mathcal{N})$ \eqref{SVI}.

The update scheme of the Halpern-type relaxed inertial inexact $\mbox{PHA}$ is as follows:

\begin{algorithm}
    \caption{Halpern-type Relaxed Inertial Inexact PHA}
    \label{Algorithm}
    \begin{algorithmic}
        \STATE \textbf{Step 0.} Choose $r>0$, $\{\alpha_{k}\}_{k=0}^{\infty}\subseteq(0,1)$, $\{\beta_{k}\}_{k=0}^{\infty}\subseteq(0,2)$, $\{\theta_{k}\}_{k=0}^{\infty}\subseteq[0,1)$, $\{\varepsilon_{k}\}_{k=0}^{\infty}\subseteq [0,+\infty)$,  $x^{-1},x^{0}\in \mathcal{N}$, $y^{-1},y^{0}\in \mathcal{M}$, and set $k=0$.

        \STATE \textbf{Step 1.} For the given iterate vectors $x^{k-1},x^{k}\in\mathcal{N}$ and $y^{k-1},y^{k}\in\mathcal{M}$, set
  \begin{equation*}
  \hat{x}^{k}=x^{k}+\theta_{k}(x^{k}-x^{k-1})~~\mbox{and}~~\hat{y}^{k}=y^{k}+\theta_{k}(y^{k}-y^{k-1}).
  \end{equation*}

        \STATE \textbf{Step 2.} Compute $\tilde{x}^{k}$ such that
\begin{equation}\label{tilde x}
  \big\|\tilde{x}^{k}-\Pi_{\mathcal{C}}\big\{\hat{x}^{k}
  -r^{-1}[F(\tilde{x}^{k})+\hat{y}^{k}]\big\}\big\|_{\mathcal{L}^{2}}\leq \varepsilon_{k}.
  \end{equation}

        \STATE \textbf{Step 3.} Let $z^{k}=\Pi_{\mathcal{C}}\big\{\hat{x}^{k}
        -r^{-1}[F(\tilde{x}^{k})+\hat{y}^{k}]\big\}$  and update
  \begin{equation}\label{x k+1}
  x^{k+1}=\alpha_{k}x^{0}+(1-\alpha_{k})\big[(1-\beta_{k})\hat{x}^{k}+\beta_{k}P_{\mathcal{N}}(z^{k})\big].
  \end{equation}

        \STATE \textbf{Step 4.} Update
  \begin{equation}\label{omega k+1}
  y^{k+1}=\alpha_{k}y^{0}+(1-\alpha_{k})\Big\{(1-\beta_{k})\hat{y}^{k}
  +\beta_{k}\big[\hat{y}^{k}+rP_{\mathcal{M}}(z^{k})+P_{\mathcal{M}}(e^{k})\big]\Big\},
  \end{equation}
  where $e^{k}=F(\tilde{x}^{k})-F(z^{k})$.
  Let $k\leftarrow k+1$ and return to Step 1.
    \end{algorithmic}
\end{algorithm}

\begin{remark}

The  $\tilde{x}^{k}$ in Step 2 is an inexact solution to the stochastic variational inequality (without nonanticipativity constraint): find $\tilde{x}^*\in \mathcal{C}$ such that
\begin{equation}\label{nonparamVI}
\inner{F(\tilde{x}^* )+\hat y^{k}+r(\tilde{x}^*-\hat x^{k})}{z-\tilde{x}^*}_{\mathcal{L}^2}\ge 0,~\forall~z\in \mathcal{C}.
\end{equation}
In general, variational inequality \eqref{nonparamVI} do not have analytical solutions, some  numerical algorithms for variational inequalities have to be used to solve it, which will inevitably lead to calculation errors. The inequality \eqref{tilde x} is usually used as a stop criterion (for small enough  $\varepsilon_{k}$) for most numerical algorithms for inexact solving variational inequality \eqref{nonparamVI}.

By \cite[Lemma 2.5]{Zhang22}, variational inequality \eqref{nonparamVI} is equivalent to finding $\tilde{x}^*\in \mathcal{C}$ such that
\begin{equation}\label{paramVI}
\inner{F(\tilde{x}^*)(\omega)+\hat{y}^{k}(\omega)+r(\tilde{x}^*(\omega)-\hat{x}^{k}(\omega))}{z-\tilde{x}^*(\omega)}\leq 0,~\forall~z\in C(\omega),\quad a.s.\ \omega\in \Omega.
\end{equation}
Therefore, the $\tilde{x}^{k}$ in Step 2 can be found by solving pointwisely the parameterized variational inequalities \eqref{paramVI} with the value of the residual function
\begin{equation} \label{residual function}
r(\tilde{x}^{k}(\omega))=\big|\tilde{x}^{k}(\omega)-\Pi_{C(\omega)}\big\{\hat{x}^{k}(\omega)
-r^{-1}[F(\tilde{x}^{k})(\omega)+\hat{y}^{k}(\omega)]\big\}\big|
\end{equation}
less than $\varepsilon_{k}$.
\end{remark}

\begin{remark}
\begin{enumerate}[(i)]
  \item If $\theta_{k}=0$ for all $k=0,1,\ldots$, Algorithm~\ref{Algorithm} reduces to the Halpern-type relaxed inexact $\mbox{PHA}$, see \cite[Algorithm 2]{Chen23};
  \item If  $\alpha_{k}=\theta_{k}=0$ for all $k=0,1,\ldots$, Algorithm~\ref{Algorithm} reduces to the relaxed inexact $\mbox{PHA}$, see \cite[Algorithm 1]{Chen23};
  \item If $\varepsilon_{k}=\alpha_{k}=\theta_{k}=0$, $\beta_{k}=1$ for all $k=0,1,\ldots$, Algorithm~\ref{Algorithm} reduces to the original $\mbox{PHA}$ proposed by Rockafellar and Sun in \cite{Rockafellar19}.
\end{enumerate}

The convergence of the algorithms in the above special cases has been proved in the related references. Particularly, the original $\mbox{PHA}$ and the relaxed inexact $\mbox{PHA}$ only exhibit weak convergence in infinite-dimensional spaces. In what follows, we will discuss the more interesting cases that $\varepsilon_{k}\neq 0$, $k=0,1,\ldots$, $\{\alpha_{k}\}_{k=0}^{\infty}\subseteq(0,1)$, $\{\beta_{k}\}_{k=0}^{\infty}\subseteq (0,2)$ and $\{\theta_{k}\}_{k=0}^{\infty}\subseteq [0,1)$. In fact, only when $\varepsilon_{k}\neq 0$, $k=0,1,\ldots$, the proximal subproblem~\eqref{tilde x} is solved inexactly. Moreover, by appropriately selecting parameter $\{\alpha_{k}\}_{k=0}^{\infty}\subseteq(0,1)$, we can extend the weak convergence results of the original $\mbox{PHA}$ and the relaxed inexact $\mbox{PHA}$ to strong convergence in infinite-dimensional spaces. Finally, the over-relaxed parameter $\{\beta_{k}\}_{k=0}^{\infty}\subseteq (1,2)$ and the inertial parameter $\{\theta_{k}\}_{k=0}^{\infty}\subseteq [0,1)$ can accelerate the convergence of Algorithm~\ref{Algorithm}.
\end{remark}

For the parameter sequences $\{\alpha_{k}\}_{k=0}^{\infty}$, $\{\beta_{k}\}_{k=0}^{\infty}$, $\{\theta_{k}\}_{k=0}^{\infty}$  and  $\{\varepsilon_{k}\}_{k=0}^{\infty}$, we assume that

\begin{enumerate}
\item[{(A3)}] $\{\alpha_{k}\}_{k=0}^{\infty}\subseteq(0,1)$ satisfies $\lim\limits_{k\rightarrow\infty}\alpha_{k}=0$ and $\sum\limits_{k=0}^{\infty}\alpha_{k}=\infty$; $\{\beta_{k}\}_{k=0}^{\infty}\subseteq(0,2)$ satisfies $0<\liminf\limits_{k\rightarrow\infty}\beta_{k}\leq\limsup\limits_{k\rightarrow\infty}\beta_{k}<2$; $\{\theta_{k}\}_{k=0}^{\infty}\subseteq[0,1)$ satisfies $\sum\limits_{k=0}^{\infty}\frac{\theta_{k}}{\alpha_{k}}
    \norm{(x^{k}-x^{k-1})-r^{-1}(y^{k}-y^{k-1})}_{\mathcal{L}^{2}}<\infty$ and;  $\{\varepsilon_{k}\}_{k=0}^{\infty}$ satisfies $\sum\limits_{k=0}^{\infty}\varepsilon_{k}<\infty$.
\end{enumerate}

\begin{remark}  \label{Remark 3.3}
Since $\theta_{k}$ is allowed to be determined after $x^{k-1},x^{k},y^{k-1}$ and $y^{k}$ being found, the condition $\sum\limits_{k=0}^{\infty}\frac{\theta_{k}}{\alpha_{k}}
    \norm{(x^{k}-x^{k-1})-r^{-1}(y^{k}-y^{k-1})}_{\mathcal{L}^{2}}<\infty$ is easy to implement in numerical computation. For instance, the parameter sequence $\{\theta_{k}\}_{k=0}^{\infty}\subseteq[0,1)$ satisfying the condition (A3) can be chosen by setting
\begin{equation*}
0\leq\theta_{k}\leq\bar{\theta}_{k}
\end{equation*}
with
\begin{equation*}
  \bar{\theta}_{k}=
  \begin{cases}
  \theta, &  \mbox{if}~x^{k}= x^{k-1}~\mbox{and}~y^{k}= y^{k-1},  \vspace{+0.5em} \\
  \min\big\{\frac{\tau\alpha_{k}}{k^{2}\norm{(x^{k}-x^{k-1})-r^{-1}(y^{k}-y^{k-1})}_{\mathcal{L}^{2}}},\theta\big\}, & \mbox{otherwise},
  \end{cases}
\end{equation*}
for some fixed $\theta\in[0,1)$ and constant $\tau>0$.
\end{remark}

Let $\mathcal{H}$ be a Hilbert space and $\widehat{T}:\mathcal{H}\rightsquigarrow\mathcal{H}$ be a maximal monotone operator. Algorithm \ref{Algorithm} is closely related to the following Halpern-type relaxed inertial inexact $\mbox{PPA}$ for the variational inclusion problem $0\in \widehat{T}(u)$:
\begin{equation}\label{HRIIPPA 4}
\left\{\!\!\!\!\!
\begin{array}{ll}
& \hat{u}^{k}=u^{k}+\theta_{k}(u^{k}-u^{k-1}), \\[+0.5em]
& u^{k+1}=\alpha_{k}u^{0}+(1-\alpha_{k})
\Big[(1-\beta_{k})\hat{u}^{k}+\beta_{k}J_{r^{-1}}^{\widehat{T}}(\hat{u}^{k}-r^{-1}e^{k})\Big],
\end{array}
\right.
\end{equation}
where $r>0$, $u^{-1}, u^{0}\in\mathcal{H}$ are given initial iteration points, $\{\alpha_{k}\}_{k=0}^{\infty}\subseteq(0,1)$, $\{\beta_{k}\}_{k=0}^{\infty}\subseteq(0,2)$, $\{\theta_{k}\}_{k=0}^{\infty}\subseteq[0,1)$ are given parameter sequences, and $\{e^{k}\}_{k=0}^{\infty}\subset\mathcal{H}$ is the error sequence.

We denote by $S$ the set of zeros of the maximal monotone operator $\widehat{T}$, i.e., $$S=\{u\in\mathcal{H}~|~0\in \widehat{T}(u)\},$$
and we always assume that $S$ is nonempty. Then, $S$ is a nonempty closed convex subset so that the metric projection $\Pi_{S}(u)$ is well defined for any $u\in\mathcal{H}$.

\begin{theorem}\label{theorem4}
Suppose that the following conditions hold:
\begin{enumerate}
\item[{(B1)}] $\{\alpha_{k}\}_{k=0}^{\infty}\subseteq(0,1)$ satisfies $\lim\limits_{k\rightarrow\infty}\alpha_{k}=0$ and $\sum\limits_{k=0}^{\infty}\alpha_{k}=\infty$;
\item[{(B2)}] $\{\beta_{k}\}_{k=0}^{\infty}\subseteq(0,2)$ satisfies
$0<\liminf\limits_{k\rightarrow\infty}\beta_{k}\leq\limsup\limits_{k\rightarrow\infty}\beta_{k}<2$;
\item[{(B3)}] $\{\theta_{k}\}_{k=0}^{\infty}\subseteq[0,1)$ satisfies $\sum\limits_{k=0}^{\infty}\frac{\theta_{k}}{\alpha_{k}}\norm{u^{k}-u^{k-1}}_{\mathcal{H}}<\infty$;
\item[{(B4)}] $\{e^{k}\}_{k=0}^{\infty}\subset \mathcal{H}$ satisfies $\sum\limits_{k=0}^{\infty}\norm{e^{k}}_{\mathcal{H}}<\infty$.
\end{enumerate}
Then, the sequence $\{u^{k}\}_{k=0}^{\infty}$ generated by the Halpern-type relaxed inertial inexact $\mbox{PPA}$~\eqref{HRIIPPA 4} converges strongly to the vector $u^{\ast}=\Pi_{S}(u^{0})$.
\end{theorem}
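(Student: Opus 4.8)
The plan is to reduce everything to the (firm) nonexpansiveness and demiclosedness of the resolvent $J:=J_{r^{-1}}^{\widehat{T}}$, whose fixed point set is exactly $S$, and then to feed a one-step estimate into a Xu/Maingé-type scalar recursion lemma. Write $u^{\ast}=\Pi_{S}(u^{0})$, so that the projection characterization recalled after the definition of $\Pi_{\mathcal D}$ gives $\inner{u^{0}-u^{\ast}}{v-u^{\ast}}_{\mathcal H}\le 0$ for every $v\in S$; put $t_{k}:=\norm{u^{k}-u^{\ast}}_{\mathcal H}^{2}$ and $\delta_{k}:=\tfrac{\theta_{k}}{\alpha_{k}}\norm{u^{k}-u^{k-1}}_{\mathcal H}$, so $\sum_{k}\delta_{k}<\infty$ by (B3).

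First I would derive the one-step estimate. Denote by $v^{k}:=(1-\beta_{k})\hat{u}^{k}+\beta_{k}J(\hat{u}^{k})=\hat u^{k}-\beta_{k}(I-J)(\hat u^{k})$ the exact relaxed resolvent step; from Proposition~\ref{Convergence proposition}(iv)--(v) (firm nonexpansiveness of $J$, with $(I-J)(u^{\ast})=0$) one gets
$$\norm{v^{k}-u^{\ast}}_{\mathcal H}^{2}\le \norm{\hat{u}^{k}-u^{\ast}}_{\mathcal H}^{2}-\beta_{k}(2-\beta_{k})\norm{(I-J)(\hat{u}^{k})}_{\mathcal H}^{2}.$$
The actual step $w^{k}:=(1-\beta_{k})\hat{u}^{k}+\beta_{k}J(\hat{u}^{k}-r^{-1}e^{k})$ satisfies $\norm{w^{k}-v^{k}}_{\mathcal H}\le\beta_{k}r^{-1}\norm{e^{k}}_{\mathcal H}$ (nonexpansiveness of $J$), and the inertial step obeys $\norm{\hat{u}^{k}-u^{\ast}}_{\mathcal H}^{2}\le\norm{u^{k}-u^{\ast}}_{\mathcal H}^{2}+\theta_{k}\norm{u^{k}-u^{k-1}}_{\mathcal H}\big(2\norm{\hat u^{k}-u^{\ast}}_{\mathcal H}+\norm{u^{k}-u^{k-1}}_{\mathcal H}\big)$. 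Applying Lemma~\ref{Norm proposition}(i) to $u^{k+1}-u^{\ast}=\alpha_{k}(u^{0}-u^{\ast})+(1-\alpha_{k})(w^{k}-u^{\ast})$ and using $(1-\alpha_{k})^{2}\le 1-\alpha_{k}$, these combine to
$$t_{k+1}\le(1-\alpha_{k})t_{k}+\alpha_{k}\gamma_{k}+\rho_{k}-(1-\alpha_{k})^{2}\beta_{k}(2-\beta_{k})\norm{(I-J)(\hat{u}^{k})}_{\mathcal H}^{2},$$
with $\gamma_{k}:=2\inner{u^{0}-u^{\ast}}{u^{k+1}-u^{\ast}}_{\mathcal H}$ and $\rho_{k}\ge 0$ assembled from $\norm{e^{k}}_{\mathcal H}$ and $\alpha_{k}\delta_{k}$, so $\sum_{k}\rho_{k}<\infty$. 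Boundedness of $\{u^{k}\}$ (hence of $\{\hat u^{k}\},\{v^{k}\},\{w^{k}\},\{\gamma_{k}\}$) follows first from the crude version of this estimate with the last term dropped, by induction, using only (B3)--(B4).

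Next I would control $\limsup_{k}\gamma_{k}$. Along any subsequence $\{k_{j}\}$ with $\liminf_{j}(t_{k_{j}+1}-t_{k_{j}})\ge 0$, the displayed estimate gives $(1-\alpha_{k_{j}})^{2}\beta_{k_{j}}(2-\beta_{k_{j}})\norm{(I-J)(\hat u^{k_{j}})}_{\mathcal H}^{2}\le (t_{k_{j}}-t_{k_{j}+1})+\alpha_{k_{j}}\gamma_{k_{j}}+\rho_{k_{j}}$, whose right side has nonpositive $\limsup$ (since $\{\gamma_{k}\}$ is bounded, $\alpha_{k}\to0$, $\rho_{k}\to0$), while (B2) keeps $\beta_{k}(2-\beta_{k})$ away from $0$ and $(1-\alpha_{k})^{2}\to1$; hence $(I-J)(\hat{u}^{k_{j}})\to 0$. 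Since also $\hat{u}^{k_{j}}-u^{k_{j}}=\theta_{k_{j}}(u^{k_{j}}-u^{k_{j}-1})\to0$ and $u^{k_{j}+1}-u^{k_{j}}=\alpha_{k_{j}}(u^{0}-u^{k_{j}})+(1-\alpha_{k_{j}})(w^{k_{j}}-u^{k_{j}})\to0$ (the last because $w^{k_{j}}-\hat u^{k_{j}}=-\beta_{k_{j}}(I-J)(\hat u^{k_{j}})+\beta_{k_{j}}\big(J(\hat u^{k_{j}}-r^{-1}e^{k_{j}})-J(\hat u^{k_{j}})\big)\to0$), any weak cluster point $\bar{u}$ of $\{u^{k_{j}+1}\}$ is a weak cluster point of $\{\hat u^{k_{j}}\}$, so Lemma~\ref{weakly-strongly closed} gives $\bar{u}\in Fix(J)=S$ and therefore $\inner{u^{0}-u^{\ast}}{\bar u-u^{\ast}}_{\mathcal H}\le 0$. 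Passing to a sub-subsequence realizing $\limsup_{j}\inner{u^{0}-u^{\ast}}{u^{k_{j}+1}-u^{\ast}}_{\mathcal H}$ yields $\limsup_{j}\gamma_{k_{j}}\le 0$.

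Finally, I would feed the one-step estimate and the previous paragraph into a Maingé-type refinement of Lemma~\ref{Strong convergence}: if $\{t_{k}\}$ is eventually non-increasing it converges, then $t_{k}-t_{k+1}\to0$ forces $(I-J)(\hat u^{k})\to0$ along the whole sequence, hence $\limsup_{k}\gamma_{k}\le0$, and Lemma~\ref{Strong convergence} (with $\alpha_{k},\gamma_{k},\rho_{k}$) gives $t_{k}\to0$; otherwise Maingé's index map $\tau(k)\to\infty$ produces an ascent subsequence to which the preceding step applies, and the comparison $t_{k}\le t_{\tau(k)+1}$ together with the recursion at $\tau(k)$ forces $\limsup_{k}t_{k}\le0$. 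In either case $u^{k}\to u^{\ast}=\Pi_{S}(u^{0})$. I expect the main obstacle to be exactly this last bookkeeping: one must route the resolvent error $\norm{e^{k}}_{\mathcal H}$ and the inertial error $\theta_{k}\norm{u^{k}-u^{k-1}}_{\mathcal H}$ entirely into the summable term $\rho_{k}$ — which is precisely what dictates the $\alpha_{k}^{-1}$ weight in (B3) and the summability in (B4) — while keeping $\gamma_{k}$ clean enough that the demiclosedness-plus-projection argument applies; correctly separating the ``$\alpha_{k}\gamma_{k}$'' and ``$\rho_{k}$'' contributions so that the scalar convergence argument genuinely closes under only (B1)--(B4) is the delicate point.
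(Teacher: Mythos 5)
Your route is genuinely different from the paper's and most of it is sound: the one-step estimate, the boundedness argument, and the ``eventually non-increasing'' branch (where Lemma~\ref{Strong convergence} absorbs the summable $\rho_{k}$) all go through under (B1)--(B4). For contrast, the paper never touches Ma\`{i}nge-type machinery: it first proves the theorem for an \emph{exact} auxiliary sequence $\{\vartheta^{k}\}$, embedding the residual with the weight $\frac{1-\alpha_{k}}{\alpha_{k}}\beta_{k}(2-\beta_{k})\norm{J_{r^{-1}}^{\widehat T}(\hat\vartheta^{k})-\hat\vartheta^{k}}_{\mathcal H}^{2}$ \emph{inside} $\gamma_{k}$, showing $0\le\limsup_{k}\gamma_{k}<\infty$ by contradiction with $\sum_{k}\alpha_{k}=\infty$, and extracting the vanishing of the residual along a subsequence realizing the limsup from $\alpha_{k}\to 0$; it then handles the inexactness separately by proving $\norm{u^{k}-\vartheta^{k}}_{\mathcal H}\to 0$ through a second application of Lemma~\ref{Strong convergence} with $\rho_{k}=2r^{-1}\norm{e^{k}}_{\mathcal H}$. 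In that design the resolvent error never interacts with a monotonicity case analysis, which is exactly where your sketch runs into trouble.

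The gap is in your non-monotone branch. At the Ma\`{i}nge indices you get $t_{\tau(k)}\le t_{\tau(k)+1}\le(1-\alpha_{\tau(k)})t_{\tau(k)}+\alpha_{\tau(k)}\gamma_{\tau(k)}+\rho_{\tau(k)}$, hence $t_{\tau(k)}\le\gamma_{\tau(k)}+\rho_{\tau(k)}/\alpha_{\tau(k)}$. The inertial part of $\rho_{k}$ equals $\alpha_{k}\delta_{k}$ and is harmless, but the part coming from $\norm{e^{k}}_{\mathcal H}$ is only \emph{summable} by (B4); summability together with (B1) does not give $\norm{e^{k}}_{\mathcal H}=o(\alpha_{k})$ (take $\alpha_{k}$ very small precisely on a sparse set of indices carrying the mass of $\norm{e^{k}}_{\mathcal H}$), so $\rho_{\tau(k)}/\alpha_{\tau(k)}$ need not vanish and ``the recursion at $\tau(k)$ forces $\limsup_{k}t_{k}\le 0$'' does not follow as stated. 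The step is repairable by a standard shift: run the whole case analysis on $s_{k}:=t_{k}+\sum_{j\ge k}\rho_{j}$, which satisfies $s_{k+1}\le(1-\alpha_{k})s_{k}+\alpha_{k}\bigl(\gamma_{k}+R_{k}\bigr)$ with $R_{k}:=\sum_{j\ge k}\rho_{j}\to 0$ and $s_{k}-t_{k}\to 0$; since $s_{k+1}>s_{k}$ implies $t_{k+1}>t_{k}$ (as $\rho_{k}\ge 0$), the ascent-subsequence analysis (residual to zero, demiclosedness, $\limsup\gamma\le 0$) transfers, and one concludes $s_{k}\to 0$, hence $t_{k}\to 0$. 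Equivalently, you could drop Ma\`{i}nge altogether and adopt the paper's device of putting the $\alpha_{k}^{-1}$-weighted residual into $\gamma_{k}$. Without one of these adjustments the final bookkeeping you yourself flag as delicate does not close.
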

The proof of Theorem \ref{theorem4} will be given in Section 5.

\begin{remark}
Compared with the relaxed inertial PPA studied in \cite{Alvarez04} and \cite{Attouch2020}, in the Halpern-type relaxed inertial inexact $\mbox{PPA}$~\eqref{HRIIPPA 4},   the inexact calculation of the resolvent  and the  Halpern's method for strong  convergence of the algorithm are considered, i.e., the error term $e^{k}\neq 0$ and the
convex combination parameter $\alpha_k\neq 0$ for all $k$. On the other hand,  compared with the modified inertial Mann algorithms in \cite{Tan20},  the Halpern-type relaxed inertial inexact $\mbox{PPA}$~\eqref{HRIIPPA 4}  allows for the inexact calculation of the resolvent and the relaxed parameter $\beta_k\in (1,2)$ for any $k=0,1,...$.
\end{remark}

In what follows we prove that, under proper conditions,   Algorithm~\ref{Algorithm} is equivalent to the Halpern-type relaxed inertial inexact $\mbox{PPA}$ \eqref{HRIIPPA 4} for the maximal monotone operator $\widehat{T}=ATA$ in $\mathcal{L}^2$, where  $T$ and $A$ are given by \eqref{equivalent mapping} and \eqref{A}, respectively.

\begin{theorem}\label{theorem2}
Assume that (A2) is satisfied. Then, Algorithm~\ref{Algorithm} for $\mbox{MSVI}(F,\mathcal{C}\cap\mathcal{N})$ \eqref{SVI} is equivalent to the Halpern-type relaxed inertial inexact $\mbox{PPA}$~\eqref{HRIIPPA 4} for finding zeros of the maximal monotone operator $ATA$ with a special choice of $\{e^k\}_{k=0}^{\infty}$.
\end{theorem}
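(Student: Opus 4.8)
The plan is to show that the iterates of Algorithm~\ref{Algorithm} can be obtained by applying the recursion \eqref{HRIIPPA 4} to the operator $\widehat{T}=ATA$ with $r$ replaced by $r^{-1}$ and a suitably chosen error sequence $\{e^k\}$, exactly mirroring the equivalence between the exact $\mbox{PHA}$~\eqref{PHA} and the exact $\mbox{PPA}$~\eqref{PPA} established in \cite{Rockafellar19}. First I would set $u^k := x^k - r^{-1}y^k$ and, correspondingly, $\hat u^k := \hat x^k - r^{-1}\hat y^k$; since $x^k\in\mathcal N$ and $y^k\in\mathcal M$ are orthogonal components, we have $P_{\mathcal N}(A(u^k))=x^k$ and $P_{\mathcal M}(A(u^k))=-r^{-1}\cdot r y^k \cdot(\text{up to scaling})$—more precisely $A(u^k)=P_{\mathcal N}(u^k)+rP_{\mathcal M}(u^k)=x^k-y^k$, so the $\mathcal N$-component of $A(u^k)$ is $x^k$ and the $\mathcal M$-component is $-y^k$. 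The inertial step in Step~1 of Algorithm~\ref{Algorithm} is visibly the affine combination defining $\hat u^k$ in \eqref{HRIIPPA 4}, because the map $(x,y)\mapsto x-r^{-1}y$ is linear.

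Next I would analyze Step~2 and Step~3. Using Definition~\ref{the partial inverse} of the partial inverse and the characterization \eqref{equivalent mapping} of $T$, together with the rescaling $A$ from \eqref{A}, I would show that the resolvent evaluation $J_{r^{-1}}^{ATA}(\hat u^k - r^{-1}e^k)$ decomposes into precisely the projection step $z^k=\Pi_{\mathcal C}\{\hat x^k - r^{-1}[F(\tilde x^k)+\hat y^k]\}$ of Step~3, followed by taking $\mathcal N$- and $\mathcal M$-components. The key identity to verify is that $J_{r^{-1}}^{ATA}=A^{-1}J_{r^{-1}}^{T}A$ up to the rescaling bookkeeping (this is where Lemma~\ref{maxi ATA} and the computation $A^{-1}(z)=P_{\mathcal N}(z)+r^{-1}P_{\mathcal M}(z)$ enter), and that evaluating $J_{r^{-1}}^{T}$ amounts to solving the inclusion $-F(\tilde x^k)-\hat y^k + r(\hat x^k-\tilde x^k)\in N_{\mathcal C}(\tilde x^k)$, i.e.\ the variational inequality \eqref{nonparamVI}/\eqref{paramVI}. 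The inexactness in \eqref{tilde x} and the error $e^k=F(\tilde x^k)-F(z^k)$ introduced in Step~4 are exactly what is needed so that $z^k$ (which satisfies the inclusion with $z^k$ in place of $\tilde x^k$ but with $F$ still evaluated at $\tilde x^k$) can be written as the output of an inexact resolvent call $J_{r^{-1}}^{T}(\hat x^k - r^{-1}[F(z^k)+e^k+\hat y^k])$; tracking where the $e^k$ term lands under $A^{-1}$ gives the error term $-r^{-1}e^k$ appearing inside the resolvent in \eqref{HRIIPPA 4}. Matching the convex combinations in \eqref{x k+1}--\eqref{omega k+1} with the second line of \eqref{HRIIPPA 4}, component by component in $\mathcal N$ and $\mathcal M$, then yields $u^{k+1}=x^{k+1}-r^{-1}y^{k+1}$, closing the induction.

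Finally, I would remark that condition (A2) guarantees, via Lemma~\ref{F and Nk}, that $F+N_{\mathcal C}$ is maximal monotone, hence $T$ is maximal monotone by Lemma~\ref{TN is maximal monotone}, and $ATA$ is maximal monotone by Lemma~\ref{maxi ATA}; this is what makes the resolvent $J_{r^{-1}}^{ATA}$ well defined and single-valued (Proposition~\ref{Convergence proposition}(i)), so the identification above is unambiguous. I would also note that the identity $P_{\mathcal M}(e^k)$ appearing in \eqref{omega k+1} rather than the full $e^k$ is consistent with the $A^{-1}$-rescaling of the $\mathcal M$-block.

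I expect the main obstacle to be the bookkeeping in Step~2--Step~4: one must carefully disentangle the fact that $\tilde x^k$ (the point at which $F$ is evaluated) differs from $z^k$ (the actual projection), and verify that the specific placement of the correction $P_{\mathcal M}(e^k)$ in the $y$-update \eqref{omega k+1}, together with the absence of any such correction in the $x$-update \eqref{x k+1}, is precisely what makes $u^{k+1}=x^{k+1}-r^{-1}y^{k+1}$ equal to the right-hand side of \eqref{HRIIPPA 4} with inner error $-r^{-1}e^k$. Getting the scaling factors of $r$ versus $r^{-1}$ correct throughout the $A$/$A^{-1}$ substitutions, and confirming that $\sum_k\|e^k\|_{\mathcal L^2}<\infty$ follows from (A2) and $\sum_k\varepsilon_k<\infty$ (since $\|e^k\|_{\mathcal L^2}=\|F(\tilde x^k)-F(z^k)\|_{\mathcal L^2}\le L_F\|\tilde x^k-z^k\|_{\mathcal L^2}\le L_F\varepsilon_k$ by \eqref{tilde x}), so that condition (B4) of Theorem~\ref{theorem4} is met, is the other place where care is required.
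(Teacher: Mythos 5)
Your overall route is the paper's: set $u^{k}=x^{k}-r^{-1}y^{k}$ (so $A(u^{k})=x^{k}-y^{k}$), pass the projection inclusion through the partial-inverse characterization \eqref{equivalent mapping} and the rescaling $A$, track where $e^{k}$ lands, and match the convex combinations in \eqref{x k+1}--\eqref{omega k+1} with \eqref{HRIIPPA 4}; the bound $\|e^{k}\|_{\mathcal{L}^{2}}\le L_{F}\varepsilon_{k}$ is also correct (though it belongs to the proof of Theorem~\ref{theorem5}, not here). However, the ``key identity'' you propose to verify, $J_{r^{-1}}^{ATA}=A^{-1}J_{r^{-1}}^{T}A$, is false: resolvents conjugate this way only under unitary maps, and $A=P_{\mathcal{N}}+rP_{\mathcal{M}}$ is not an isometry unless $r=1$. (For instance, for the maximal monotone operator $T=I$ the left-hand side acts as $(1+r)^{-1}I$ on $\mathcal{M}$ while the right-hand side acts as $(1+r^{-1})^{-1}I$ there.) Relatedly, the inclusion $-F(\tilde{x})-\hat{y}^{k}+r(\hat{x}^{k}-\tilde{x})\in N_{\mathcal{C}}(\tilde{x})$ characterizes the resolvent of $F+N_{\mathcal{C}}$ at $\hat{u}^{k}$, not of the partial inverse $T$, and the object you describe as ``an inexact resolvent call $J_{r^{-1}}^{T}(\hat{x}^{k}-r^{-1}[F(z^{k})+e^{k}+\hat{y}^{k}])$'' is in fact the projection $\Pi_{\mathcal{C}}$, i.e.\ the resolvent of $N_{\mathcal{C}}$ alone. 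The paper's proof uses no conjugation formula at all: it starts from $r(\hat{x}^{k}-z^{k})-e^{k}-\hat{y}^{k}\in[F+N_{\mathcal{C}}](z^{k})$, inverts \eqref{x k+1}--\eqref{omega k+1} to express $P_{\mathcal{N}}(z^{k})$ and $\hat{y}^{k}+rP_{\mathcal{M}}(z^{k})+P_{\mathcal{M}}(e^{k})$ through $x^{k+1},y^{k+1}$, and pushes the resulting inclusion through \eqref{equivalent mapping} and $A$ until it reads $\hat{u}^{k}-r^{-1}e^{k}\in(I+r^{-1}ATA)(\cdot)$, whence the resolvent identity follows by single-valuedness. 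If you try to verify your stated identity literally, the step fails; you must carry out this direct inclusion manipulation instead.

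The second, more substantive gap is that you argue only one direction. The theorem asserts equivalence, and the paper also proves the converse: given a PPA iterate generated by \eqref{HRIIPPA 4} with $e^{k}=F(\tilde{x}^{k})-F(z^{k})$ built from the inexact step \eqref{tilde x}, one reconstructs from the resolvent relation a point $\tilde{z}^{k}\in\mathcal{C}$ and corresponding updates, and one must show $\tilde{z}^{k}=z^{k}$ before Steps 3--4 of Algorithm~\ref{Algorithm} are recovered. That identification is not mere bookkeeping: the paper compares the two variational inequalities satisfied by $\tilde{z}^{k}$ and $z^{k}$ and uses the monotonicity of $F$ (assumption (A2)) to conclude $r\|z^{k}-\tilde{z}^{k}\|_{\mathcal{L}^{2}}^{2}\le 0$, hence $\tilde{z}^{k}=z^{k}$. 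This uniqueness argument, and with it the whole converse half of the equivalence, is missing from your proposal.
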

\begin{proof}
By the definition of $z^{k}$, we have
\begin{equation*}
\big\langle \hat{x}^{k}-r^{-1}[F(\tilde{x}^{k})+\hat{y}^{k}]-z^{k},
z-z^{k}\big\rangle_{\mathcal{L}^{2}}\leq 0,~\forall~z\in\mathcal{C}.
\end{equation*}
It implies that
\begin{equation*}
\hat{x}^{k}-r^{-1}[F(\tilde{x}^{k})+\hat{y}^{k}]-z^{k}\in N_{\mathcal{C}}(z^{k}).
\end{equation*}
Rearranging the above formula, we obtain that
\begin{equation*}
0\in F(\tilde{x}^{k})+\hat{y}^{k}+r(z^{k}-\hat{x}^{k})+N_{\mathcal{C}}(z^{k}).
\end{equation*}
Using the fact $F(\tilde{x}^{k})=F(z^{k})+e^{k}$, we have
\begin{equation*}
0\in F(z^{k})+e^{k}+\hat{y}^{k}+r(z^{k}-\hat{x}^{k})+N_{\mathcal{C}}(z^{k}),
\end{equation*}
or equivalently,
\begin{equation*}
r(\hat{x}^{k}-z^{k})-e^{k}-\hat{y}^{k}\in [F+N_{\mathcal{C}}](z^{k}).
\end{equation*}
Since $z^{k}=P_{\mathcal{N}}(z^{k})+P_{\mathcal{M}}(z^{k})$ and $e^{k}=P_{\mathcal{N}}(e^{k})+P_{\mathcal{M}}(e^{k})$, the above variational inclusion problem can be rewritten as
\begin{equation}\label{variational inclusion problem}
r(\hat{x}^{k}-P_{\mathcal{N}}(z^{k}))-P_{\mathcal{N}}(e^{k})-\hat{y}^{k}-rP_{\mathcal{M}}(z^{k})-P_{\mathcal{M}}(e^{k})\in [F+N_{\mathcal{C}}](z^{k}).
\end{equation}
In addition, by \eqref{x k+1} and \eqref{omega k+1}, we have
\begin{equation} \label{r_x k+1}
P_{\mathcal{N}}(z^{k})=\frac{1}{(1-\alpha_{k})\beta_{k}}x^{k+1}-\frac{1}{\beta_{k}}\frac{\alpha_{k}}{1-\alpha_{k}}x^{0}+(1-\frac{1}{\beta_{k}})\hat{x}^{k}
\end{equation}
and
\begin{equation} \label{r_omega k+1}
\hat{y}^{k}+rP_{\mathcal{M}}(z^{k})+P_{\mathcal{M}}(e^{k})=\frac{1}{(1-\alpha_{k})\beta_{k}}y^{k+1}+(1-\frac{1}{\beta_{k}})\hat{y}^{k}
-\frac{1}{\beta_{k}}\frac{\alpha_{k}}{1-\alpha_{k}}y^{0}.
\end{equation}
Substituting \eqref{r_x k+1} and \eqref{r_omega k+1} into \eqref{variational inclusion problem}, we deduce that 
\begin{equation} \label{Inequality2}
\begin{split}
& r\Big(\frac{1}{\beta_{k}}\hat{x}^{k}-\frac{1}{(1-\alpha_{k})\beta_{k}}x^{k+1}
+\frac{1}{\beta_{k}}\frac{\alpha_{k}}{1-\alpha_{k}}x^{0}\Big)-P_{\mathcal{N}}(e^{k})   \\
-(1 & -\frac{1}{\beta_{k}}) \hat{y}^{k}-\frac{1}{(1-\alpha_{k})\beta_{k}}y^{k+1}+\frac{1}{\beta_{k}}\frac{\alpha_{k}}{1-\alpha_{k}}y^{0}
\in [F+N_{\mathcal{C}}](z^{k}).
\end{split}
\end{equation}
In addition, from \eqref{r_x k+1} and \eqref{r_omega k+1} we have
\begin{equation} \label{Inequality3}
\begin{split}
z^{k}
= & \frac{1}{(1-\alpha_{k})\beta_{k}}x^{k+1}+(1-\frac{1}{\beta_{k}})\hat{x}^{k}-\frac{1}{\beta_{k}}\frac{\alpha_{k}}{1-\alpha_{k}}x^{0}  \\
& + r^{-1}\Big(\frac{1}{(1-\alpha_{k})\beta_{k}}y^{k+1}-\frac{1}{\beta_{k}}\hat{y}^{k}
-\frac{1}{\beta_{k}}\frac{\alpha_{k}}{1-\alpha_{k}}y^{0}-P_{\mathcal{M}}(e^{k})\Big).
\end{split}
\end{equation}
Combining \eqref{Inequality2} with \eqref{Inequality3}, we obtain that
\begin{align*}
\begin{split}
& r\Big(\frac{1}{\beta_{k}}\hat{x}^{k}-\frac{1}{(1-\alpha_{k})\beta_{k}}x^{k+1}
+\frac{1}{\beta_{k}}\frac{\alpha_{k}}{1-\alpha_{k}}x^{0}\Big)-P_{\mathcal{N}}(e^{k})   \\
& -(1-\frac{1}{\beta_{k}})\hat{y}^{k}-\frac{1}{(1-\alpha_{k})\beta_{k}}y^{k+1}+\frac{1}{\beta_{k}}\frac{\alpha_{k}}{1-\alpha_{k}}y^{0} \\
\in & [F+N_{\mathcal{C}}]\Big[\frac{1}{(1-\alpha_{k})\beta_{k}}x^{k+1}+(1-\frac{1}{\beta_{k}})\hat{x}^{k}-\frac{1}{\beta_{k}}\frac{\alpha_{k}}{1-\alpha_{k}}x^{0}  \\
&\qquad\qquad\ + r^{-1}\Big(\frac{1}{(1-\alpha_{k})\beta_{k}}y^{k+1}-\frac{1}{\beta_{k}}\hat{y}^{k}
-\frac{1}{\beta_{k}}\frac{\alpha_{k}}{1-\alpha_{k}}y^{0}-P_{\mathcal{M}}(e^{k})\Big)\Big].
\end{split}
\end{align*}
By \eqref{x k+1}, \eqref{omega k+1}, the definitions of $\hat{x}^{k}$, $\hat{y}^{k}$, and the fact that $x^{-1},x^{0}\in\mathcal{N}$ and $y^{-1},y^{0}\in\mathcal{M}$, we deduce that
$ x^{k},\hat{x}^{k}\in\mathcal{N}$ and $y^{k},\hat{y}^{k}\in\mathcal{M}$  for any $k=0,1,...$. Letting $v^{i}=x^{i}-y^{i}$, $\hat{v}^{i}=\hat{x}^{i}-\hat{y}^{i}$, $i=0,k,k+1$,
we have $x^{i}=P_{\mathcal{N}}(v^{i})$, $\hat{x}^{i}=P_{\mathcal{N}}(\hat{v}^{i})$, $y^{i}=-P_{\mathcal{M}}(v^{i})$ and $\hat{y}^{i}=-P_{\mathcal{M}}(\hat{v}^{i})$, $i=0,k,k+1$. Since the orthogonal projections $P_{\mathcal{N}}$ and $P_{\mathcal{M}}$ are linear operators, we deduce that
\begin{align*}
\begin{split}
& P_{\mathcal{N}}\bigg[r\Big(\frac{1}{\beta_{k}}\hat{v}^{k}-\frac{1}{(1-\alpha_{k})\beta_{k}}v^{k+1}
+\frac{1}{\beta_{k}}\frac{\alpha_{k}}{1-\alpha_{k}}v^{0}-r^{-1}e^{k}\Big)\bigg] \\
& +P_{\mathcal{M}}\Big((1-\frac{1}{\beta_{k}})\hat{v}^{k}
+\frac{1}{(1-\alpha_{k})\beta_{k}}v^{k+1}-\frac{1}{\beta_{k}}\frac{\alpha_{k}}{1-\alpha_{k}}v^{0}\Big) \\
\in & [F+N_{\mathcal{C}}]\bigg\{P_{\mathcal{N}}\Big((1-\frac{1}{\beta_{k}})\hat{v}^{k}
+\frac{1}{(1-\alpha_{k})\beta_{k}}v^{k+1}-\frac{1}{\beta_{k}}\frac{\alpha_{k}}{1-\alpha_{k}}v^{0}\Big) \\
& \qquad\qquad\ +P_{\mathcal{M}}\Big[r^{-1}\Big(\frac{1}{\beta_{k}}\hat{v}^{k}-\frac{1}{(1-\alpha_{k})\beta_{k}}v^{k+1}
+\frac{1}{\beta_{k}}\frac{\alpha_{k}}{1-\alpha_{k}}v^{0}-e^{k}\Big)\Big]\bigg\}.
\end{split}
\end{align*}
By the property of orthogonal projection, we have
\begin{align*}
\begin{split}
& P_{\mathcal{N}}\bigg\{P_{\mathcal{N}}\Big[r\Big(\frac{1}{\beta_{k}}\hat{v}^{k}-\frac{1}{(1-\alpha_{k})\beta_{k}}v^{k+1}
+\frac{1}{\beta_{k}}\frac{\alpha_{k}}{1-\alpha_{k}}v^{0}- r^{-1}e^{k}\Big)\Big] \\
&~~~~~~+P_{\mathcal{M}}\Big[r^{-1}\Big(\frac{1}{\beta_{k}}\hat{v}^{k}
-\frac{1}{(1-\alpha_{k})\beta_{k}}v^{k+1}+\frac{1}{\beta_{k}}\frac{\alpha_{k}}{1-\alpha_{k}}v^{0}-e^{k}\Big)\Big]\bigg\} \\
& +P_{\mathcal{M}}\Big((1-\frac{1}{\beta_{k}})\hat{v}^{k}+\frac{1}{(1-\alpha_{k})\beta_{k}}v^{k+1}-\frac{1}{\beta_{k}}\frac{\alpha_{k}}{1-\alpha_{k}}v^{0}\Big) \\
\in & [F+N_{\mathcal{C}}]\Bigg\{P_{\mathcal{N}}\Big((1-\frac{1}{\beta_{k}})\hat{v}^{k}
+\frac{1}{(1-\alpha_{k})\beta_{k}}v^{k+1}-\frac{1}{\beta_{k}}\frac{\alpha_{k}}{1-\alpha_{k}}v^{0}\Big)  \\
& \qquad\qquad\ +P_{\mathcal{M}}\bigg\{P_{\mathcal{N}}\Big[r\Big(\frac{1}{\beta_{k}}\hat{v}^{k}-\frac{1}{(1-\alpha_{k})\beta_{k}}v^{k+1}
+\frac{1}{\beta_{k}}\frac{\alpha_{k}}{1-\alpha_{k}}v^{0}-r^{-1}e^{k}\Big)\Big] \\
&\qquad\qquad\ ~~~~~~~~~~+P_{\mathcal{M}}\Big[r^{-1}\Big(\frac{1}{\beta_{k}}\hat{v}^{k}-\frac{1}{(1-\alpha_{k})\beta_{k}}v^{k+1}
+\frac{1}{\beta_{k}}\frac{\alpha_{k}}{1-\alpha_{k}}v^{0}-e^{k}\Big)\Big]\bigg\}\Bigg\}.
\end{split}
\end{align*}
Then, by~\eqref{equivalent mapping}, we deduce that
\begin{align*}
\begin{split}
& P_{\mathcal{N}}\Big[r\Big(\frac{1}{\beta_{k}}\hat{v}^{k}-\frac{1}{(1-\alpha_{k})\beta_{k}}v^{k+1}
+\frac{1}{\beta_{k}}\frac{\alpha_{k}}{1-\alpha_{k}}v^{0}- r^{-1}e^{k}\Big)\Big] \\
&+ P_{\mathcal{M}}\Big[r^{-1}\Big(\frac{1}{\beta_{k}}\hat{v}^{k}-\frac{1}{(1-\alpha_{k})\beta_{k}}v^{k+1}
+\frac{1}{\beta_{k}}\frac{\alpha_{k}}{1-\alpha_{k}}v^{0}-e^{k}\Big)\Big] \\
\in & T\Big((1-\frac{1}{\beta_{k}})\hat{v}^{k}+\frac{1}{(1-\alpha_{k})\beta_{k}}v^{k+1}-\frac{1}{\beta_{k}}\frac{\alpha_{k}}{1-\alpha_{k}}v^{0}\Big).
\end{split}
\end{align*}
Consequently,
\begin{align*}
\begin{split}
& P_{\mathcal{N}}\Big[r\Big(\frac{1}{\beta_{k}}\hat{v}^{k}-\frac{1}{(1-\alpha_{k})\beta_{k}}v^{k+1}
+\frac{1}{\beta_{k}}\frac{\alpha_{k}}{1-\alpha_{k}}v^{0}\Big)\Big] \\
& + P_{\mathcal{M}}\Big[r^{-1}\Big(\frac{1}{\beta_{k}}\hat{v}^{k}-\frac{1}{(1-\alpha_{k})\beta_{k}}v^{k+1}
+\frac{1}{\beta_{k}}\frac{\alpha_{k}}{1-\alpha_{k}}v^{0}\Big)\Big] \\
\in & T\Big((1-\frac{1}{\beta_{k}})\hat{v}^{k}+\frac{1}{(1-\alpha_{k})\beta_{k}}v^{k+1}-\frac{1}{\beta_{k}}\frac{\alpha_{k}}{1-\alpha_{k}}v^{0}\Big)
+P_{\mathcal{N}}(e^{k})+r^{-1}P_{\mathcal{M}}(e^{k}).
\end{split}
\end{align*}
By the definition of $A$, we have $A$ is invertible with $A^{-1}(\xi)=P_{\mathcal{N}}(\xi)+r^{-1}P_{\mathcal{M}}(\xi)$ and $A^{-2}(\xi)=A^{-1}(A^{-1}(\xi))= P_{\mathcal{N}}(\xi)+r^{-2}P_{\mathcal{M}}(\xi)$  for any $\xi\in \mathcal{L}^2$. Therefore, the above formula is equivalent to
\begin{equation*}
\begin{split}
& rA^{-2}\Big(\frac{1}{\beta_{k}}\hat{v}^{k}-\frac{1}{(1-\alpha_{k})\beta_{k}}v^{k+1}+\frac{1}{\beta_{k}}\frac{\alpha_{k}}{1-\alpha_{k}}v^{0}\Big)  \\
\in & T\Big((1-\frac{1}{\beta_{k}})\hat{v}^{k}+\frac{1}{(1-\alpha_{k})\beta_{k}}v^{k+1}-\frac{1}{\beta_{k}}\frac{\alpha_{k}}{1-\alpha_{k}}v^{0}\Big)+A^{-1}(e^{k}).
\end{split}
\end{equation*}
Consequently,
\begin{equation*}
\begin{split}
& A^{-1}\Big(\frac{1}{\beta_{k}}\hat{v}^{k}-\frac{1}{(1-\alpha_{k})\beta_{k}}v^{k+1}+\frac{1}{\beta_{k}}\frac{\alpha_{k}}{1-\alpha_{k}}v^{0}\Big) \\
\in & r^{-1}AT\Big((1-\frac{1}{\beta_{k}})\hat{v}^{k}+\frac{1}{(1-\alpha_{k})\beta_{k}}v^{k+1}-\frac{1}{\beta_{k}}\frac{\alpha_{k}}{1-\alpha_{k}}v^{0}\Big)+r^{-1}e^{k}.
\end{split}
\end{equation*}
Define $u^{k}=A^{-1}(v^{k})$, $\hat{u}^{k}=u^{k}+\theta_{k}(u^{k}-u^{k-1})$  for any $k=0,1,...$.  We have $v^{k}=A(u^{k})$ and $\hat{v}^{k}=A(\hat{u}^{k})$  for all $k=0,1,...$.  Then, the above inclusion  can be rewritten as
\begin{equation*}
\begin{split}
& \frac{1}{\beta_{k}}\hat{u}^{k}-\frac{1}{(1-\alpha_{k})\beta_{k}}u^{k+1}+\frac{1}{\beta_{k}}\frac{\alpha_{k}}{1-\alpha_{k}}u^{0} \\
\in & r^{-1}ATA\Big((1-\frac{1}{\beta_{k}})\hat{u}^{k}+\frac{1}{(1-\alpha_{k})\beta_{k}}u^{k+1}-\frac{1}{\beta_{k}}\frac{\alpha_{k}}{1-\alpha_{k}}u^{0}\Big)+r^{-1}e^{k}.
\end{split}
\end{equation*}
It implies that
\begin{equation*}
\hat{u}^{k}\in(I+r^{-1}ATA)\Big((1-\frac{1}{\beta_{k}})\hat{u}^{k}+\frac{1}{(1-\alpha_{k})\beta_{k}}u^{k+1}-\frac{1}{\beta_{k}}\frac{\alpha_{k}}{1-\alpha_{k}}u^{0}\Big)
+r^{-1}e^{k}.
\end{equation*}
From the facts $J_{r^{-1}}^{ATA}=(I+r^{-1}ATA)^{-1}$ and $J_{r^{-1}}^{ATA}$ is a single-valued mapping, we deduce that
\begin{equation*}
(1-\frac{1}{\beta_{k}})\hat{u}^{k}+\frac{1}{(1-\alpha_{k})\beta_{k}}u^{k+1}-\frac{1}{\beta_{k}}\frac{\alpha_{k}}{1-\alpha_{k}}u^{0}
=J_{r^{-1}}^{ATA}(\hat{u}^{k}-r^{-1}e^{k}).
\end{equation*}
Reformulating the above formula, we have
\begin{equation}\label{HRIIPPA 2}
\left\{\!\!\!\!\!\!
\begin{array}{ll}
& \hat{u}^{k}=u^{k}+\theta_{k}(u^{k}-u^{k-1}),\\
& u^{k+1}=\alpha_{k}u^{0}+(1-\alpha_{k})\Big[(1-\beta_{k})\hat{u}^{k}+\beta_{k}J_{r^{-1}}^{ATA}(\hat{u}^{k}-r^{-1}e^{k})\Big],
\end{array}
\right.
\end{equation}
which coincides with the Halpern-type relaxed inertial inexact PPA \eqref{HRIIPPA 4} for the maximal monotone operator $ATA$.

Next, we prove that the converse holds true. Let $u^{0}$, $u^{-1}$ be given vectors in $\mathcal{L}^{2}$,  $u^{k}$, $u^{k-1}$ be the vectors obtained at iterations $k$ and $k-1$, respectively. Define $x^{k}=P_{\mathcal{N}}(A(u^{k}))$ and $y^{k}=-P_{\mathcal{M}}(A(u^{k}))$. Using \eqref{tilde x}, we first obtain the intermediate iteration vectors $\tilde{x}^{k}$ and $z^{k}$. Then, we define $e^{k}=F(\tilde{x}^{k})-F(z^{k})$ and update $u^{k+1}$ by
\begin{equation*}
\left\{\!\!\!\!\!\!
\begin{array}{ll}
& \hat{u}^{k}=u^{k}+\theta_{k}(u^{k}-u^{k-1}),\\
& u^{k+1}=\alpha_{k}u^{0}+(1-\alpha_{k})\Big[(1-\beta_{k})\hat{u}^{k}+\beta_{k}J_{r^{-1}}^{ATA}(\hat{u}^{k}-r^{-1}e^{k})\Big].
\end{array}
\right.
\end{equation*}
By the definition of $J_{r^{-1}}^{ATA}$, we have
\begin{equation}  \label{Inequality19}
\begin{split}
& \frac{1}{\beta_{k}}\hat{u}^{k}-\frac{1}{(1-\alpha_{k})\beta_{k}}u^{k+1}+\frac{1}{\beta_{k}}\frac{\alpha_{k}}{1-\alpha_{k}}u^{0} \\
\in & r^{-1}ATA\Big((1-\frac{1}{\beta_{k}})\hat{u}^{k}+\frac{1}{(1-\alpha_{k})\beta_{k}}u^{k+1}
-\frac{1}{\beta_{k}}\frac{\alpha_{k}}{1-\alpha_{k}}u^{0}\Big)+r^{-1}e^{k}.
\end{split}
\end{equation}
Defining $x^{k+1}=P_{\mathcal{N}}(A(u^{k+1}))$ and $y^{k+1}=-P_{\mathcal{M}}(A(u^{k+1}))$, we have $A(u^{k+1})=x^{k+1}-y^{k+1}$. By the definition of the operator $A$, we deduce that $A^{-1}(u^{k+1})=A^{-1}(A^{-1}(x^{k+1}-y^{k+1}))=x^{k+1}-r^{-2}y^{k+1}$ and $A^{-1}(e^{k})=P_{\mathcal{N}}(e^{k})+r^{-1}P_{\mathcal{M}}(e^{k})$. Similarly, defining  $x^{0}=P_{\mathcal{N}}(A(u^{0}))$, $y^{0}=-P_{\mathcal{M}}(A(u^{0}))$, and, $\hat{x}^{k}=P_{\mathcal{N}}(A(\hat{u}^{k}))$, $\hat{y}^{k}=-P_{\mathcal{M}}(A(\hat{u}^{k}))$, we have that $A(u^{0})=x^{0}-y^{0}$, $A^{-1}(u^{0})=x^{0}-r^{-2}y^{0}$, and,  $A(\hat{u}^{k})=\hat{x}^{k}-\hat{y}^{k}$, $A^{-1}(\hat{u}^{k})=\hat{x}^{k}-r^{-2}\hat{y}^{k}$. Then, the inclusion \eqref{Inequality19} can be rewritten as
\begin{eqnarray*}
&& r\Big[\frac{1}{\beta_{k}}(\hat{x}^{k}-r^{-2}\hat{y}^{k})
-\frac{1}{(1-\alpha_{k})\beta_{k}}(x^{k+1}-r^{-2}y^{k+1})+\frac{1}{\beta_{k}}\frac{\alpha_{k}}{1-\alpha_{k}}(x^{0}-r^{-2}y^{0})\Big] \\
\!\!\!&\in &\!\!\!\! T\Big[(1-\frac{1}{\beta_{k}})(\hat{x}^{k}-\hat{y}^{k})
+\frac{1}{(1-\alpha_{k})\beta_{k}}(x^{k+1}-y^{k+1})-\frac{1}{\beta_{k}}\frac{\alpha_{k}}{1-\alpha_{k}}(x^{0}-y^{0})\Big]\\
&&\,\,
+P_{\mathcal{N}}(e^{k})+r^{-1}P_{\mathcal{M}}(e^{k}).
\end{eqnarray*}
Since $x^{k-1},x^{k},x^{k+1}\in\mathcal{N}$ and $y^{k-1},y^{k},y^{k+1}\in\mathcal{M}$, it follows that
\begin{equation*}
\begin{split}
& P_{\mathcal{N}}\Big[r\Big(\frac{1}{\beta_{k}}\hat{x}^{k}-\frac{1}{(1-\alpha_{k})\beta_{k}}x^{k+1}
+\frac{1}{\beta_{k}}\frac{\alpha_{k}}{1-\alpha_{k}}x^{0}\Big)-e^{k}\Big] \\
& + P_{\mathcal{M}}\Big[r^{-1}\Big(\frac{1}{(1-\alpha_{k})\beta_{k}}y^{k+1}-\frac{1}{\beta_{k}}\hat{y}^{k}
-\frac{1}{\beta_{k}}\frac{\alpha_{k}}{1-\alpha_{k}}y^{0}\Big)-r^{-1}e^{k}\Big]  \\
\in & T\Big((1-\frac{1}{\beta_{k}})(\hat{x}^{k}-\hat{y}^{k})
+\frac{1}{(1-\alpha_{k})\beta_{k}}(x^{k+1}-y^{k+1})-\frac{1}{\beta_{k}}\frac{\alpha_{k}}{1-\alpha_{k}}(x^{0}-y^{0})\Big).
\end{split}
\end{equation*}
Then, by~\eqref{equivalent mapping}, we have
\begin{equation*}
\begin{split}
& P_{\mathcal{N}}\Big[r\Big(\frac{1}{\beta_{k}}\hat{x}^{k}-\frac{1}{(1-\alpha_{k})\beta_{k}}x^{k+1}
+\frac{1}{\beta_{k}}\frac{\alpha_{k}}{1-\alpha_{k}}x^{0}\Big)-e^{k}\Big] \\
& + P_{\mathcal{M}}\Big((1-\frac{1}{\beta_{k}})(\hat{x}^{k}-\hat{y}^{k})
+\frac{1}{(1-\alpha_{k})\beta_{k}}(x^{k+1}-y^{k+1})-\frac{1}{\beta_{k}}\frac{\alpha_{k}}{1-\alpha_{k}}(x^{0}-y^{0})\Big)  \\
\in & [F+N_{\mathcal{C}}]\bigg\{P_{\mathcal{N}}\Big((1-\frac{1}{\beta_{k}})(\hat{x}^{k}-\hat{y}^{k})
+\frac{1}{(1-\alpha_{k})\beta_{k}}(x^{k+1}-y^{k+1})-\frac{1}{\beta_{k}}\frac{\alpha_{k}}{1-\alpha_{k}}(x^{0}-y^{0})\Big)  \\
&\qquad\qquad\ + P_{\mathcal{M}}\Big[r^{-1}\Big(\frac{1}{(1-\alpha_{k})\beta_{k}}y^{k+1}-\frac{1}{\beta_{k}}\hat{y}^{k}
-\frac{1}{\beta_{k}}\frac{\alpha_{k}}{1-\alpha_{k}}y^{0}\Big)-r^{-1}e^{k}\Big]\bigg\},
\end{split}
\end{equation*}
or equivalently,
\begin{equation} \label{Inequality20}
\begin{split}
& r\Big(\frac{1}{\beta_{k}}\hat{x}^{k}-\frac{1}{(1-\alpha_{k})\beta_{k}}x^{k+1}
+\frac{1}{\beta_{k}}\frac{\alpha_{k}}{1-\alpha_{k}}x^{0}\Big)-P_{\mathcal{N}}(e^{k}) \\
& -(1-\frac{1}{\beta_{k}})\hat{y}^{k}-\frac{1}{(1-\alpha_{k})\beta_{k}}y^{k+1}+\frac{1}{\beta_{k}}\frac{\alpha_{k}}{1-\alpha_{k}}y^{0} \\
\in & [F+N_{\mathcal{C}}]\Big[(1-\frac{1}{\beta_{k}})\hat{x}^{k}+\frac{1}{(1-\alpha_{k})\beta_{k}}x^{k+1}-\frac{1}{\beta_{k}}\frac{\alpha_{k}}{1-\alpha_{k}}x^{0} \\
&\qquad\qquad\ +r^{-1}\Big(\frac{1}{(1-\alpha_{k})\beta_{k}}y^{k+1}-\frac{1}{\beta_{k}}\hat{y}^{k}
-\frac{1}{\beta_{k}}\frac{\alpha_{k}}{1-\alpha_{k}}y^{0}-P_{\mathcal{M}}(e^{k})\Big)\Big].
\end{split}
\end{equation}
Defining
\begin{equation} \label{Inequality23}
\begin{split}
\tilde{z}^{k}=& (1-\frac{1}{\beta_{k}})\hat{x}^{k}+\frac{1}{(1-\alpha_{k})\beta_{k}}x^{k+1}-\frac{1}{\beta_{k}}\frac{\alpha_{k}}{1-\alpha_{k}}x^{0} \\
&+   r^{-1}\Big(\frac{1}{(1-\alpha_{k})\beta_{k}}y^{k+1}-\frac{1}{\beta_{k}}\hat{y}^{k}
-\frac{1}{\beta_{k}}\frac{\alpha_{k}}{1-\alpha_{k}}y^{0}-P_{\mathcal{M}}(e^{k})\Big).
\end{split}
\end{equation}
It is obvious that $\tilde{z}^{k}\in \mathcal{C}$. Then, from the facts $x^{k-1},x^{k},x^{k+1}\in\mathcal{N}$ and $y^{k-1},y^{k},y^{k+1}\in\mathcal{M}$, we deduce that
\begin{equation}\label{Inequality21}
x^{k+1}=\alpha_{k}x^{0}+(1-\alpha_{k})\Big[(1-\beta_{k})\hat{x}^{k}+\beta_{k}P_{\mathcal{N}}(\tilde{z}^{k})\Big]
\end{equation}
and
\begin{equation}\label{Inequality22}
y^{k+1}=\alpha_{k}y^{0}+(1-\alpha_{k})\Big\{(1-\beta_{k})\hat{y}^{k}
+\beta_{k}\Big[\hat{y}^{k}+rP_{\mathcal{M}}(\tilde{z}^{k})+P_{\mathcal{M}}(e^{k})\Big]\Big\}.
\end{equation}

Now, we only need to prove $\tilde{z}^{k}=z^{k}$. Substituting \eqref{Inequality23}, \eqref{Inequality21} and \eqref{Inequality22} into \eqref{Inequality20}, we have
\begin{equation*}
r(\hat{x}^{k}-\tilde z^{k})-e^{k}-\hat{y}^{k}\in [F+N_{\mathcal{C}}](\tilde z^{k}).
\end{equation*}
It implies that
\begin{equation}\label{Inequality24}
\inner{F(\tilde z^{k})+r(\tilde z^{k}-\hat{x}^{k})+e^{k}+\hat{y}^{k}}{z-\tilde z^{k}}_{\mathcal{L}^2} \ge 0,\  \forall\ z\in \mathcal{C}.
\end{equation}
In addition, by the definition of $z^{k}$ and the equality $F(\tilde{x}^{k})=F(z^{k})+e^{k}$, we have
\begin{equation} \label{Inequality25}
\inner{F(z^{k})+r(z^{k}-\hat{x}^{k})+e^{k}+\hat{y}^{k}}{z-z^{k}}_{\mathcal{L}^2} \ge 0,\  \forall\ z\in \mathcal{C}.
\end{equation}
By \eqref{Inequality24}, \eqref{Inequality25}  and the monotonicity of $F$,  we deduce that
$$r\|z^k-\tilde z^{k}\|_{\mathcal{L}^2}^2
\le\inner{F(z^{k})-F(\tilde z^{k})+r(z^{k}-\tilde z^{k})}{z^{k}-\tilde z^{k}}_{\mathcal{L}^2}\le 0. $$
Therefore, $z^k=\tilde z^{k}$.

This proves that the Halpern-type Relaxed Inertial Inexact $\mbox{PPA}$~\eqref{HRIIPPA 4} for finding zeros of the maximal monotone operator $ATA$ is equivalent to Algorithm~\ref{Algorithm} if we choose the error sequence $\{e^k\}_{k=0}^{\infty}$ by the above approach. This completes the proof of Theorem \ref{theorem2}.

\end{proof}

Now we prove the convergence of Algorithm~\ref{Algorithm}.
\begin{theorem}\label{theorem5}
Assume that (A1)--(A3) are satisfied. Let $\{(x^{k},y^{k})\}_{k=0}^{\infty}$ be the sequence generated by Algorithm~\ref{Algorithm}. Then $\{(x^{k},y^{k})\}_{k=0}^{\infty}$ converges  strongly to a solution pair $(x^{\ast},y^{\ast})$ of the
multi-stage $\mbox{SVI}$ in extensive form \eqref{SVIextensivedefini}. Especially, $\{x^{k}\}_{k=0}^{\infty}$ converges strongly to a solution $x^{\ast}$ of   $\mbox{MSVI}(F,\mathcal{C}\cap\mathcal{N})$ \eqref{SVI}.
\begin{proof}
We divide the proof into two steps.

Step 1: In this step, we prove that the solution set $\mbox{SOL}(F,\mathcal{C}\cap\mathcal{N})$ of the multi-stage $\mbox{SVI}$ in extensive form  \eqref{SVIextensivedefini} is nonempty if and only if the solution set $S$ of the maximal monotone inclusion problem $0\in ATA(u)$ with $T$ and $A$ defined by \eqref{equivalent mapping} and \eqref{A} is nonempty.

First, we assume that the solution set $\mbox{SOL}(F,\mathcal{C}\cap\mathcal{N} )$ of the multi-stage $\mbox{SVI}$ in extensive form \eqref{SVIextensivedefini} is nonempty, i.e., (A1) holds true. Then, there are $x^*\in \mathcal{N}$ and $y^*\in \mathcal{M}$ such that $-F(x^*)-y^*\in N_{\mathcal{C}}(x^*)$, or equivalently, $P_{\mathcal{M}}(x^*-y^*)\in [F+N_{\mathcal{C}}](P_{\mathcal{N}}(x^*-y^*))$. Letting $u^*=A^{-1}(x^*-y^*)$. Then, we have $A(u^*)=x^*-y^*$ and $P_{\mathcal{M}}(A(u^*))\in [F+N_{\mathcal{C}}](P_{\mathcal{N}}(A(u^*)))$. By \eqref{equivalent mapping}, we have $0\in TA(u^*)$. Since $A$ is a linear invertible mapping, we deduce that $0\in ATA(u^*)$.

Next, assume that the solution set $S$ is nonempty, then there is $u^*$ satisfies $0\in ATA(u^*)$, which implies that $0\in TA(u^*)$. By \eqref{equivalent mapping}, we obtain that $P_{\mathcal{M}}(A(u^*))\in[F+N_{\mathcal{C}}](P_{\mathcal{N}}(A(u^*)))$, which is equivalent to $-F(P_{\mathcal{N}}(A(u^*)))+P_{\mathcal{M}}(A(u^*))\in N_{\mathcal{C}}(P_{\mathcal{N}}(A(u^*)))$. Consequently, the pair $(P_{\mathcal{N}}(A(u^*)),-P_{\mathcal{M}}(A(u^*)))$ is a solution to the multi-stage $\mbox{SVI}$ in extensive form  \eqref{SVIextensivedefini}.

Step 2: In this step we prove that, under conditions (A1)--(A3),
$$(x^{k},y^{k})\rightarrow (P_{\mathcal{N}}(A(u^{\ast})),-P_{\mathcal{M}}(A(u^{\ast})))\in \mbox{SOL}(F,\mathcal{C}\cap \mathcal{N})$$  as $k\rightarrow \infty$, where $u^{\ast}\in S$.

Let $\{(x^{k},y^{k})\}_{k=0}^{\infty}$ be generated by Algorithm~\ref{Algorithm}. Define
$$u^{k}=A^{-1}(x^{k}-y^{k})=x^{k}-r^{-1}y^{k},~ \forall~k=0,1,\ldots.$$
Then by the facts $x^{k}\in\mathcal{N}$ and $y^{k}\in\mathcal{M}$   we have, for all $k$, $(x^{k},y^{k})=(P_{\mathcal{N}}(A(u^{k})),-P_{\mathcal{M}}(A(u^{k})))$. By the proof of Theorem~\ref{theorem2}, we have that $\{u^{k}\}_{k=0}^{\infty}$ can be regarded as a sequence generated by the Halpern-type relaxed inertial inexact $\mbox{PPA}$~\eqref{HRIIPPA 4} for maximal monotone operator $ATA$.

Combining  condition (A2) with the definitions of $e^{k}$ and $\tilde{x}^{k}$ in Algorithm \ref{Algorithm}, we have
\begin{equation*}
\norm{e^{k}}_{\mathcal{L}^{2}}=\norm{F(\tilde{x}^{k})-F(z^{k})}_{\mathcal{L}^{2}}
\leq L_{F}\norm{\tilde{x}^{k}-z^{k}}_{\mathcal{L}^{2}}\leq L_{F}\varepsilon_{k},\ \forall\ k=0,1,\ldots.
\end{equation*}
Then by condition (A3), we deduce that
$
\sum\limits_{k=0}^{\infty}\norm{e^{k}}_{\mathcal{L}^{2}}<\infty.
$
By the fact $\sum\limits_{k=0}^{\infty}\frac{\theta_{k}}{\alpha_{k}}
\norm{(x^{k}-x^{k-1})-r^{-1}(y^{k}-y^{k-1})}_{\mathcal{L}^{2}}<\infty$ in  condition (A3) and the relationship $u^{k}=x^{k}-r^{-1}y^{k}$, we obtain that
\begin{equation*}
\sum\limits_{k=0}^{\infty}\frac{\theta_{k}}{\alpha_{k}}\norm{u^{k}-u^{k-1}}_{\mathcal{L}^{2}}<\infty.
\end{equation*}

By condition (A1) and the conclusion of Step 1, we have  $S\neq \emptyset$. Then, by Theorem \ref{theorem4},  the sequence $\{u^{k}\}_{k=0}^{\infty}$ generated by the Halpern-type relaxed inertial inexact $\mbox{PPA}$ converges strongly to $u^{\ast}=\Pi_{S}(u^{0})$.

Since $P_{\mathcal{N}}$, $P_{\mathcal{M}}$ are bounded linear operators and  $A$ is a symmetric invertible linear mapping, we have $$(x^{k},y^{k})=(P_{\mathcal{N}}(A(u^{k})),-P_{\mathcal{M}}(A(u^{k})))\rightarrow (P_{\mathcal{N}}(A(u^{\ast})),-P_{\mathcal{M}}(A(u^{\ast})))  \text{ as } k\rightarrow \infty.$$

Define $x^{\ast}=P_{\mathcal{N}}(A(u^{\ast}))$, $y^{\ast}=-P_{\mathcal{M}}(A(u^{\ast}))$. Then,  $u^{\ast}=A^{-1}(x^{\ast}-y^{\ast})=x^{\ast}-r^{-1}y^{\ast}$ and
$0\in ATA(x^{\ast}-r^{-1}y^{\ast})$. By the definition of the operator $A$, we have $0\in T(x^{\ast}-y^{\ast})$. Then, by~\eqref{equivalent mapping}, we deduce that $P_{\mathcal{M}}(x^{\ast}-y^{\ast})\in [F+N_{\mathcal{C}}](P_{\mathcal{N}}(x^{\ast}-y^{\ast}))$, or equivalently, $-F(x^{\ast})-y^{\ast}\in N_{\mathcal{C}}(x^{\ast})$, i.e., $(x^{\ast},y^{\ast})\in \mbox{SOL}(F,\mathcal{C}\cap\mathcal{N})$. Especially, $x^{\ast}$ is a solution to $\mbox{MSVI}(F,\mathcal{C}\cap\mathcal{N})$ \eqref{SVI}. This completes the proof of Theorem \ref{theorem5}.

\end{proof}
\end{theorem}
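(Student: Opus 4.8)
The plan is to reduce the statement entirely to two facts already in hand: the equivalence in Theorem~\ref{theorem2} between Algorithm~\ref{Algorithm} and the Halpern-type relaxed inertial inexact $\mbox{PPA}$~\eqref{HRIIPPA 4} for the maximal monotone operator $ATA$, and the strong convergence Theorem~\ref{theorem4} for that $\mbox{PPA}$. Thus the proof is essentially bookkeeping: check that hypotheses (A1)--(A3) transfer into the hypotheses (B1)--(B4) required by Theorem~\ref{theorem4}, and then translate the conclusion back through the bounded invertible linear change of variables $u=A^{-1}(x-y)=x-r^{-1}y$.

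First I would establish that $\mbox{SOL}(F,\mathcal{C}\cap\mathcal{N})\neq\emptyset$ if and only if the zero set $S$ of $ATA$ is nonempty. Given $x^{*}\in\mathcal{N}$, $y^{*}\in\mathcal{M}$ with $-F(x^{*})-y^{*}\in N_{\mathcal{C}}(x^{*})$, rewrite this as $P_{\mathcal{M}}(x^{*}-y^{*})\in[F+N_{\mathcal{C}}](P_{\mathcal{N}}(x^{*}-y^{*}))$, set $u^{*}=A^{-1}(x^{*}-y^{*})$, invoke the partial-inverse identity~\eqref{equivalent mapping} to obtain $0\in TA(u^{*})$, and use invertibility of $A$ to conclude $0\in ATA(u^{*})$; the converse runs the same chain in reverse and produces the pair $(P_{\mathcal{N}}(A(u^{*})),-P_{\mathcal{M}}(A(u^{*})))$. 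In particular (A1) forces $S\neq\emptyset$, so $u^{*}=\Pi_{S}(u^{0})$ is well defined.

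Next I would set $u^{k}:=A^{-1}(x^{k}-y^{k})=x^{k}-r^{-1}y^{k}$ and, by the construction in the proof of Theorem~\ref{theorem2}, regard $\{u^{k}\}_{k=0}^{\infty}$ as a sequence generated by \eqref{HRIIPPA 4} for $ATA$ with the induced error sequence $e^{k}=F(\tilde{x}^{k})-F(z^{k})$. Then I verify (B1)--(B4). Conditions (B1) and (B2) are verbatim the first two clauses of (A3). For (B4), the Lipschitz assumption (A2) together with~\eqref{tilde x} gives $\norm{e^{k}}_{\mathcal{L}^{2}}=\norm{F(\tilde{x}^{k})-F(z^{k})}_{\mathcal{L}^{2}}\le L_{F}\norm{\tilde{x}^{k}-z^{k}}_{\mathcal{L}^{2}}\le L_{F}\varepsilon_{k}$, so $\sum_{k}\norm{e^{k}}_{\mathcal{L}^{2}}<\infty$ by the summability of $\{\varepsilon_{k}\}$ in (A3). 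For (B3), since $x^{k}\in\mathcal{N}$ and $y^{k}\in\mathcal{M}$ one has $u^{k}-u^{k-1}=(x^{k}-x^{k-1})-r^{-1}(y^{k}-y^{k-1})$, so the last clause of (A3) is precisely $\sum_{k}\frac{\theta_{k}}{\alpha_{k}}\norm{u^{k}-u^{k-1}}_{\mathcal{L}^{2}}<\infty$. Theorem~\ref{theorem4} then yields $u^{k}\to u^{*}=\Pi_{S}(u^{0})$ strongly in $\mathcal{L}^{2}$.

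Finally I would push the convergence forward: because $A$, $P_{\mathcal{N}}$, $P_{\mathcal{M}}$ are bounded linear operators, $(x^{k},y^{k})=(P_{\mathcal{N}}(A(u^{k})),-P_{\mathcal{M}}(A(u^{k})))\to(P_{\mathcal{N}}(A(u^{*})),-P_{\mathcal{M}}(A(u^{*})))=:(x^{*},y^{*})$ strongly. To identify the limit, $0\in ATA(u^{*})$ forces $0\in TA(u^{*})$, hence by~\eqref{equivalent mapping} $P_{\mathcal{M}}(A(u^{*}))\in[F+N_{\mathcal{C}}](P_{\mathcal{N}}(A(u^{*})))$, i.e.\ $-F(x^{*})-y^{*}\in N_{\mathcal{C}}(x^{*})$, so $(x^{*},y^{*})\in\mbox{SOL}(F,\mathcal{C}\cap\mathcal{N})$ and $x^{*}$ solves $\mbox{MSVI}(F,\mathcal{C}\cap\mathcal{N})$~\eqref{SVI}. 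I do not expect a genuine obstacle in this argument: all the analytic difficulty has been absorbed into Theorem~\ref{theorem4} (proved in Section 5) and Theorem~\ref{theorem2}. The only steps requiring a little care are the solution-set equivalence in the first paragraph above --- both directions must be handled through~\eqref{equivalent mapping} --- and the algebraic check that $u^{k}-u^{k-1}=(x^{k}-x^{k-1})-r^{-1}(y^{k}-y^{k-1})$, which is what makes (B3) match the corresponding clause of (A3) exactly.
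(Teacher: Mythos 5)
Your proposal is correct and follows essentially the same route as the paper's own proof: the same Step‑1 equivalence of $\mbox{SOL}(F,\mathcal{C}\cap\mathcal{N})\neq\emptyset$ with $S\neq\emptyset$ via \eqref{equivalent mapping}, and the same Step‑2 reduction through $u^{k}=x^{k}-r^{-1}y^{k}$, the error bound $\norm{e^{k}}_{\mathcal{L}^{2}}\le L_{F}\varepsilon_{k}$, the transfer of (A3) into (B1)--(B4), Theorem~\ref{theorem4}, and the bounded linear push-forward to identify the limit pair. No gaps.
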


\section{Numerical Example}
In this section, we consider the special case that the sample space is a finite set. On such discrete sample space,  $\mathcal{L}^2$ is isomorphic to a finite-dimensional Hilbert space, see \cite{Rockafellar19,Zhang22} for more details. We test the progressive hedging algorithm ($\mbox{PHA}$ for short, see (2.5) in \cite{Rockafellar19}), the Halpern-type relaxed inexact progressive hedging algorithm ($\mbox{HRIPHA}$ for short, see Algorithm 2 in \cite{Chen23}) and Algorithm~\ref{Algorithm} (Alg.\ref{Algorithm} for short) in such discrete case. All experiments are performed in Matlab2017a on a notebook with Intel(R) Core(TM) i5-5350U CPU (1.80GHz 1.80GHz) and 8.00 GB of RAM.

The following test example is a modification of the one in \cite{Rockafellar19}, see  \cite[Example 4.1]{Zhang22}.

\begin{example}\label{eg:1}
Let $m\in \mathbb{N}$. Define the discrete sample space $\hat{\Omega}:=\{\hat{\omega}_{1},\hat{\omega}_{2},\ldots,\hat{\omega}_{m}\}$, which consists of the finite sample points $\hat{\omega}_{1},\hat{\omega}_{2},\ldots,\hat{\omega}_{m}$. Let $\hat{\mathscr{F}}=2^{\hat{\Omega}}$ be the $\sigma$-algebra generated by all subsets of $\hat{\Omega}$. We randomly generate a discrete probability distribution $\hat{P}(\{\hat{\omega}_{i}\})=p_{i}$, where $p_{i}>0$ for $i=1,2,\ldots,m$ and $\sum\limits_{i=1}^{m}p_{i}=1$. Let $\hat{\xi}_{i}\in R$,  $i=1,2,\ldots,m$, $\hat{\xi}_{i}\neq \hat{\xi}_{j}$ for any $i\neq j$. We define a mapping $\hat{\xi}:\hat{\Omega}\rightarrow R$ by $\hat{\xi}(\hat{\omega}_{i})=\hat{\xi}_{i}$ for $i=1,2,\ldots,m$, and assume that $\hat{P}(\hat{\xi}=\hat{\xi}_{i})=\hat{P}(\{\hat{\omega}_{i}\})=p_{i}$ for $i=1,2,\ldots,m$.
Clearly, $(\hat{\Omega},\hat{\mathscr{F}}, \hat{P})$ forms a complete probability space, and $\hat{\xi}$ is a random vector defined on $(\hat{\Omega},\hat{\mathscr{F}}, \hat{P})$.

In this discrete case, the space of all square-integrable random vectors taking values in $R^{n}$ is defined as
$$\hat{\mathcal{L}}^{2}:=\big\{x: \hat{\Omega}\to R^{n}\ \big|\ \mathds{E}|x|^2= \sum_{i=1}^{m}|x(\omega_i)|^2p_i<+\infty\big\}.$$

Let $n_{0},n_{1}\in \mathbb{N}$ such that $n_{0}+n_{1}=n$. For any $x\in\hat{\mathcal{L}}^2$, we can decompose $x$ as $(x_{0},x_{1})$, where $x_{0}:\hat\Omega\to R^{n_{0}}$ and $x_{1}:\hat\Omega\to R^{n_{1}}$. Define $\hat{\mathscr{F}}_{0}=\{\emptyset, \hat{\Omega}\}$ and $\hat{\mathscr{F}}_{1}=\sigma(\hat{\xi})$. From the definition of $\hat{\xi}$, we see that $\hat{\mathscr{F}}_{1}=2^{\hat{\Omega}}$. The nonanticipativity subspace $\mathcal{N}$ of $\hat{\mathcal{L}}^{2}$ corresponding to $\hat{\mathscr{F}}_{0}$ and $\hat{\mathscr{F}}_{1}$ is represented by
\begin{equation*}
\mathcal{N}=\Big\{x\in \hat{\mathcal{L}}^{2} ~\Big|~x(\hat{\omega}_{i})=(x_{0},x_{1}(\hat{\xi}(\hat{\omega}_{i}))),~i=1,2,\ldots,m\Big\}.
\end{equation*}
This means that the first $n_0$ components of $x \in \mathcal{N}$ are deterministic and equal to some $x_0 \in R^{n_{0}}$, while the last $n_1$ components are functions of $\hat \xi$. By the properties of conditional expectation, the orthogonal complementary subspace $\mathcal{M}$ of $\mathcal{N}$ is given by
\begin{equation*}
\mathcal{M}=\mathcal{N}^{\bot}=\Big\{y\in \hat{\mathcal{L}}^{2}~\Big |~y(\hat{\omega}_{i})=(y_{0}(\hat{\omega}_{i}),0),~i=1,2,\ldots,m,~ \mathds{E}y_{0}=\sum\limits_{i=1}^{m}y_{0}(\hat{\omega}_{i})p_{i}=0\Big\}.
\end{equation*}
In this case, the first $n_0$  components of $y\in  \mathcal{M}$ represent a random vector taking values in $R^{n_{0}}$ with expectation $0$, while the last $n_1$ components are identically zero.

Let $C(\hat{\omega}_{i})\equiv [0,1]^{n}$. The nonempty closed convex set $\mathcal{C}$ can be expressed as
\begin{equation*}
\mathcal{C}=\Big\{x\in \hat{\mathcal{L}}^{2} ~\Big |~x(\hat{\omega}_{i})\in [0,1]^{n},~i=1,2,\ldots,m\Big\}.
\end{equation*}

We define the mapping $\hat{F}:\hat{\mathcal{L}}^{2}\rightarrow \hat{\mathcal{L}}^{2}$ by $\hat{F}(x)=M(\hat{\xi})x+b(\hat{\xi})$ for all $x\in\hat{\mathcal{L}}^{2}$, where $M(\hat{\xi}_{i})=M_{i}+D$. Here, $M_{i}\in R^{n\times n}$ is a randomly generated nonzero positive semi-definite matrix, and $D\in R^{n\times n}$ has diagonal elements ranging from 1 to n, with all other elements of $D$ equaling to zero. Additionally, $b(\hat{\xi}_{i})=b_{i}$, where $b_{i}\in R^{n}$ for $i=1,2,\ldots,m$. It is evident that the mapping $\hat{F}$ is monotone and Lipschitz continuous.

For Alg.\ref{Algorithm}, $\mbox{HRIPHA}$ and $\mbox{PHA}$, we select the initial vectors $x^{-1}=x^{0}\in \mathcal{N}$ and $y^{-1}=y^{0}\in \mathcal{M}$. The stopping criterion is defined as
\begin{equation*}
Err(x^{k}):=\max\limits_{i}\big|x^{k}(\hat{\omega}_{i})-\Pi_{C(\hat{\omega}_{i})}\{x^{k}(\hat{\omega}_{i})
-r^{-1}[\hat{F}(x^{k})(\hat{\omega}_{i})+ y^{k}(\hat{\omega}_{i})]\}\big|<\varepsilon
\end{equation*}
for a given sufficiently small $\varepsilon>0$.

In the implementation of Alg.\ref{Algorithm} and $\mbox{HRIPHA}$, we utilize Malitsky's projected reflected gradient algorithm (see Algorithm 4.1 in \cite{Malitsky2015}) to inexactly solve the corresponding proximal subproblems (e.g., \eqref{tilde x} in Algorithm \ref{Algorithm}) with an accuracy of $\varepsilon_{k}=10^{-4}\times\frac{1}{k^{2}}$ at iteration $k$. For the implementation of $\mbox{PHA}$, the same algorithm is employed to address the proximal subproblems with a fixed accuracy of $\varepsilon_{k}\equiv 10^{-12}$. We define
\begin{equation}  \label{hat-theta}
  \hat{\theta}_{k}=
  \begin{cases}
  \frac{1}{3}, &  \mbox{if}~x^{k}= x^{k-1}~\mbox{and}~y^{k}= y^{k-1}, \\
  \min\big\{\frac{\alpha_{k}}{k^{2}\|(x^{k}-x^{k-1})
  -r^{-1}(y^{k}-y^{k-1})\|_{\mathcal{L}^{2}}}10^{6},~\frac{1}{3}\big\}, & \mbox{otherwise},
  \end{cases}
\end{equation}
where $\{(x^{k}, y^k)\}_{k=0}^{\infty}$ is the sequence generated by Algorithm \ref{Algorithm}.
The details of parameter selections for $\mbox{Alg.1}$, $\mbox{HRIPHA}$ and $\mbox{PHA}$ are listed in Table \ref{Table1}. We show the numerical performance of these algorithms in Table \ref{Table2} and Figures \ref{Figure 1}-\ref{Figure 4}. Each algorithm is run five times. We report the average number of iterations (Avg-iter) and the average running time in seconds (Avg-time(s)). As shown in Table \ref{Table2} and Figures \ref{Figure 1}-\ref{Figure 4}, Alg.\ref{Algorithm} requires less average running time and fewer average number of iterations than the other algorithms for any given accuracy.

\end{example}

\begin{table}[ht!]
\begin{center}
\begin{minipage}{\textwidth}
\caption{Selection of relevant parameters of Alg.\ref{Algorithm},~$\mbox{HRIPHA},~\mbox{PHA}$}\label{Table1}
\vspace{+1em}
\begin{tabular*}{\textwidth}{@{\extracolsep{\fill}}lccccc@{\extracolsep{\fill}}}
\toprule%
\multirow{2}{*}{Algorithms} & \multicolumn{5}{@{}c@{}}{Selection of related parameters} \\  \cmidrule{2-6}%
& $r$ & $\alpha_{k}$ & $\beta_{k}$ & $\theta_{k}$  \\
\midrule
$\mbox{PHA}$  & $\sqrt{n}$ & $\backslash$ & $\backslash$ & $\backslash$      \\\hline
$\mbox{HRIPHA}$  & $\sqrt{n}$ & $\frac{1}{2000(k+1)}$ & $1.5-\frac{1}{(k+1)^2}$ &  $\backslash$        \\\hline
Alg.\ref{Algorithm}  & $\sqrt{n}$ & $\frac{1}{2000(k+1)}$ & $1.5-\frac{1}{(k+1)^2}$ & $\hat{\theta}_{k}$      \\\hline
\end{tabular*}
\end{minipage}
\end{center}
\end{table}

\begin{remark}
The parameters selection in Table \ref{Table1} for Algorithm \ref{Algorithm}  are optimal to some extent, as they not only satisfy the algorithm's convergence conditions but also significantly enhance its performance. The selection methods for $r$, $\alpha_k$, and $\beta_k$ follow those in \cite{Chen23}  where more details are available.  In Remark \ref{Remark 3.3}, we propose a feasible method for selecting the sequence $\{\theta_k\}_{k=0}^{\infty}$, which is closely related to $\alpha_k$ and $k$. Note that $\alpha_k = \frac{1}{2000(k+1)}\to 0$  and $\frac{1}{k^2} \to 0$ as $k \to \infty$, therefore $\theta_k$ will rapidly decrease  to $0$. That will limit the acceleration capability of the inertial term of Algorithm \ref{Algorithm}. Extensive numerical experiments show that $\theta_k = \frac{1}{3}$ optimizes the algorithm's performance. Thus, in practical applications, we choose the $\theta_k$  as detailed in \eqref{hat-theta}, which satisfies Condition (A3) and at the same time maintains the acceleration capability of the inertial term.
\end{remark}

\begin{table}[ht]
\begin{center}
\begin{minipage}{\textwidth}
\caption{Numerical results for Example \ref{eg:1}}\label{Table2}
\vspace{+1em}
\begin{tabular*}{\textwidth}{@{\extracolsep{\fill}}lcccccc@{\extracolsep{\fill}}}
\toprule
$\varepsilon=10^{-3} $
&\multicolumn{2}{@{}c@{}}{$m=50,~n_0=n_1=100 $}
&\multicolumn{2}{@{}c@{}}{$m=75,~n_0=n_1=150 $} \\
\cmidrule{2-3}\cmidrule{4-5}%
 & Avg-time(s) & Avg-iter  &    Avg-time(s) & Avg-iter  \\
\midrule
PHA                 & 100.781  & 114  & 310.047  & 142\\
HRIPHA              & 22.594   & 76   & 69.547   & 95\\
Alg.\ref{Algorithm} & 14.203   & 51   & 40.531   & 63 \\
\toprule
\end{tabular*}

\begin{tabular*}{\textwidth}{@{\extracolsep{\fill}}lcccccc@{\extracolsep{\fill}}}
\toprule
$\varepsilon=10^{-4} $
&\multicolumn{2}{@{}c@{}}{$m=50,~n_0=n_1=100 $}
&\multicolumn{2}{@{}c@{}}{$m=75,~n_0=n_1=150 $} \\
\cmidrule{2-3}\cmidrule{4-5}%
 & Avg-time(s) & Avg-iter  &    Avg-time(s) & Avg-iter  \\
\midrule
PHA                 & 161.219  & 164   & 459.656  & 209\\
HRIPHA              & 35.750   & 111   & 112.984  & 141\\
Alg.\ref{Algorithm} & 21.484   & 73    & 68.828   & 94 \\
\toprule
\end{tabular*}

\begin{tabular*}{\textwidth}{@{\extracolsep{\fill}}lcccccc@{\extracolsep{\fill}}}
\toprule
$\varepsilon=10^{-5}$
& \multicolumn{2}{@{}c@{}}{$m=50,~n_0=n_1=100 $ }
& \multicolumn{2}{@{}c@{}}{$m=75,~n_0=n_1=150$  } \\
\cmidrule{2-3}\cmidrule{4-5}%
& Avg-time(s) & Avg-iter  &    Avg-time(s) & Avg-iter  \\
\midrule
PHA                 & 186.078  & 220 & 610.313 & 275\\
HRIPHA              & 54.578   & 163 & 173.391 & 204\\
Alg.\ref{Algorithm} & 38.891   & 128 & 136.359 & 170 \\
\toprule
\end{tabular*}
\end{minipage}
\end{center}
\end{table}

\begin{figure}[H]
\centering
\begin{minipage}[t]{0.48\textwidth}
\centering
\includegraphics[width=6.5cm]{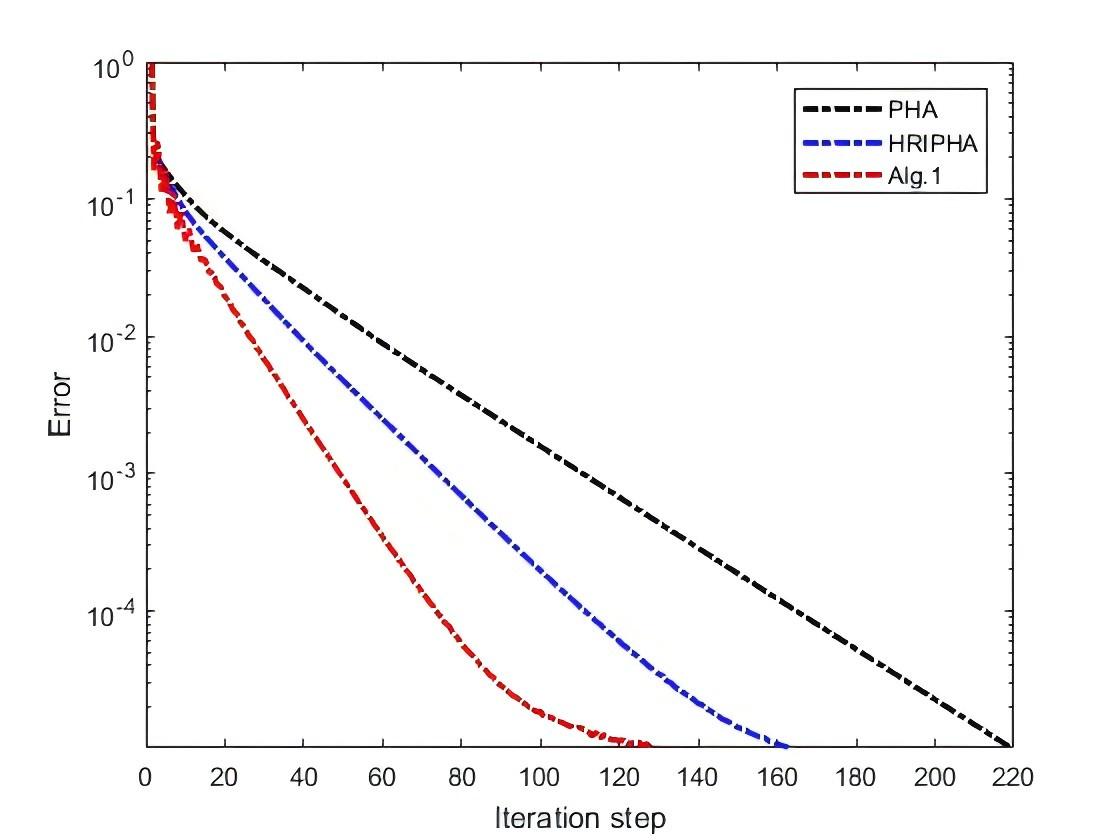}
\vspace{-1.5em}
\caption{$m=50,~n_{0}=n_{1}=100$, $\varepsilon=10^{-5}$}  \label{Figure 1}
\end{minipage}
\begin{minipage}[t]{0.48\textwidth}
\centering
\includegraphics[width=6.5cm]{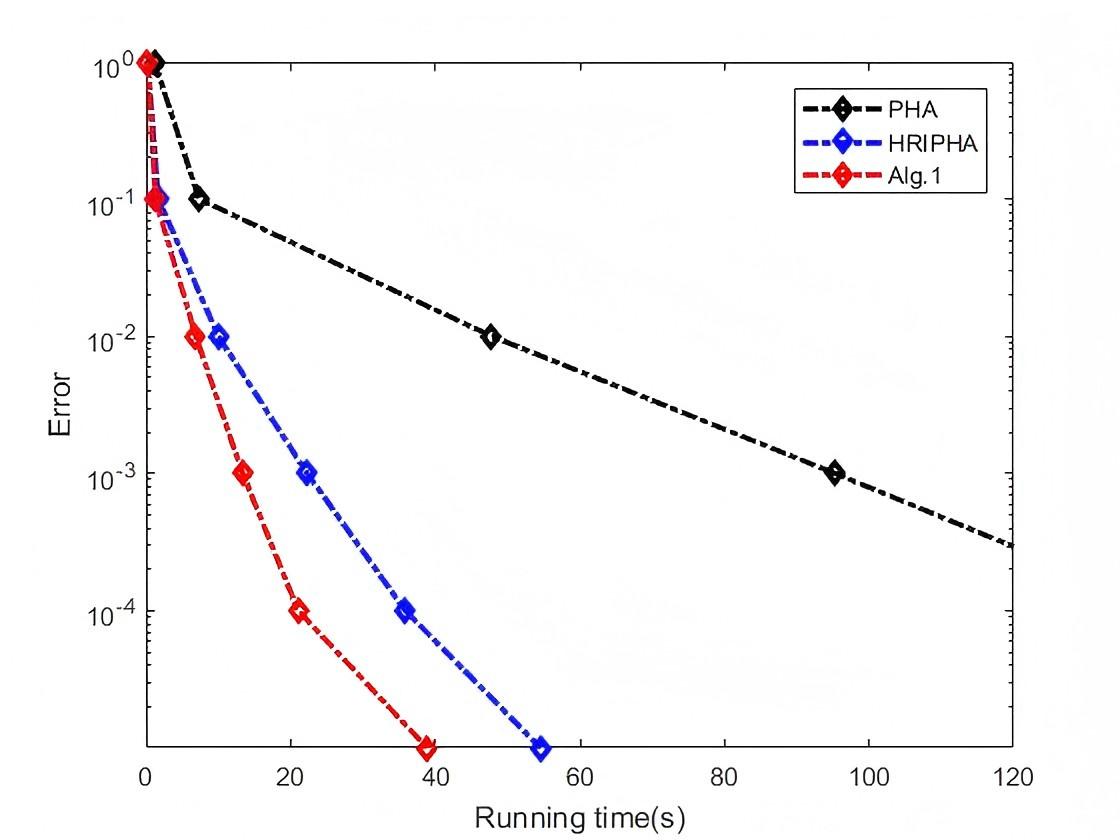}
\vspace{-1.5em}
\caption{$m=50,~n_{0}=n_{1}=100$, $\varepsilon=10^{-5}$}  \label{Figure 2}
\end{minipage}
\end{figure}

\begin{figure}[H]
\centering
\begin{minipage}[t]{0.48\textwidth}
\centering
\includegraphics[width=6.5cm]{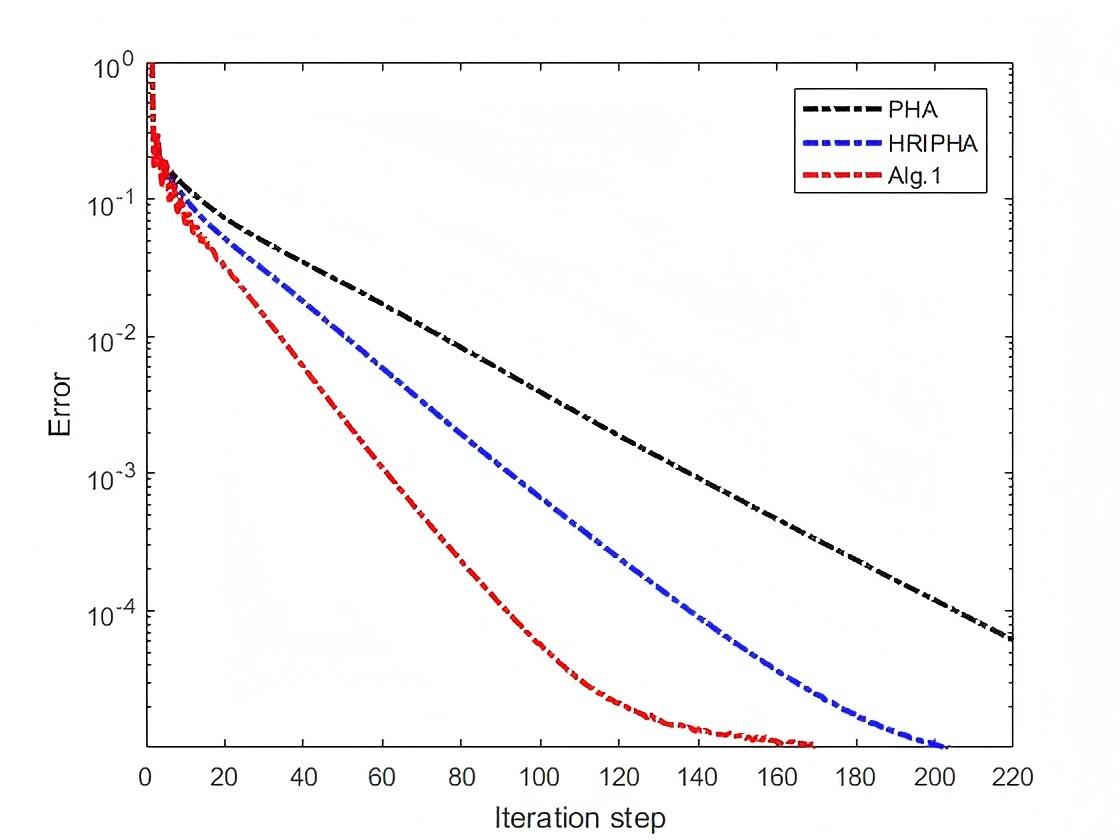}
\vspace{-1.5em}
\caption{$m=75,~n_{0}=n_{1}=150$, $\varepsilon=10^{-5}$}  \label{Figure 3}
\end{minipage}
\begin{minipage}[t]{0.48\textwidth}
\centering
\includegraphics[width=6.5cm]{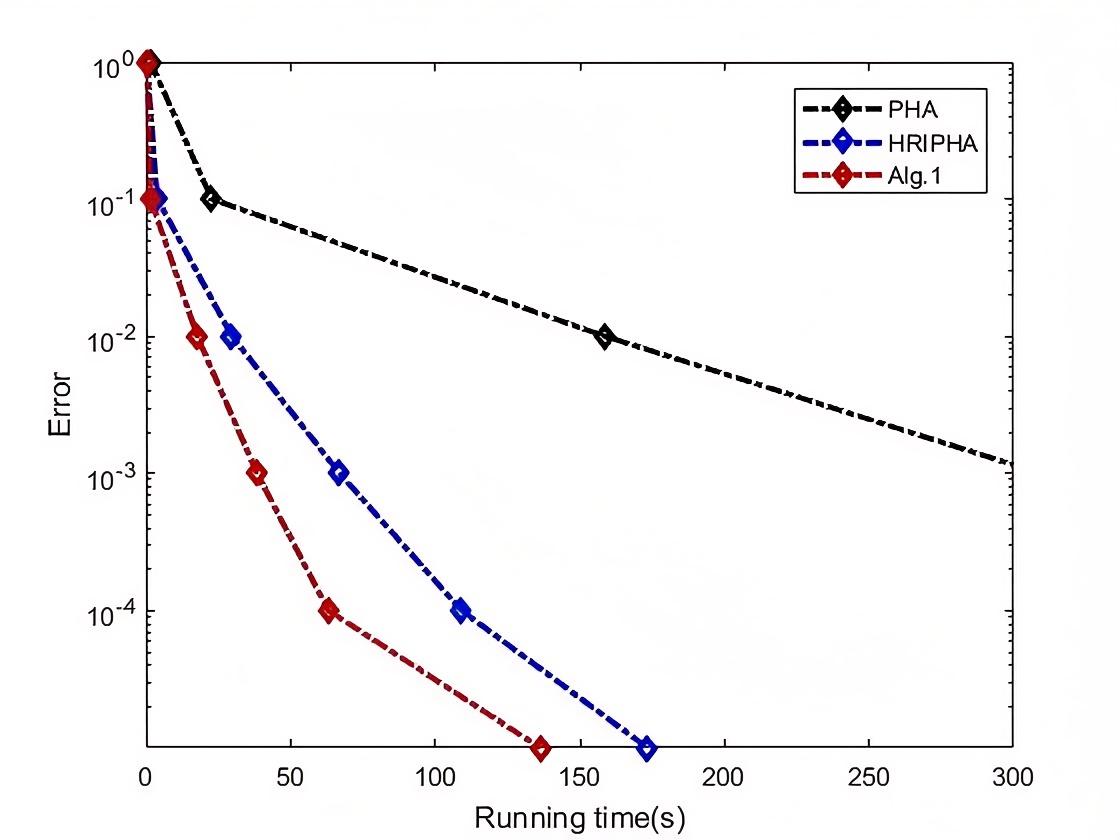}
\vspace{-1.5em}
\caption{$m=75,~n_{0}=n_{1}=150$, $\varepsilon=10^{-5}$}  \label{Figure 4}
\end{minipage}
\end{figure}

The following test example for a discrete stochastic optimal control problem is from \cite[Example 4.2]{Zhang22}.

\begin{example}\label{eg:2}
Let us consider a sequence $\{\xi_{i}\}_{i=1}^{N\ell}$ ($\ell\in \mathbb{N}$) of independent and identically distributed random variables such that for each $i$, $P(\xi_{i}=-1)=P(\xi_{i}=1)=0.5$. We define
$$S_{0}=0,\ S_{i}=\sum_{k=1}^{i}\xi_k,~i=1,2,...,N\ell,$$
and
$$Y_{0}=0,\ Y_{i}^{\ell}=\frac{1}{\sqrt{N\ell}}S_{i\ell},~i=1,2,...,N.$$
Clearly, $(Y_{1}^{\ell},Y_{2}^{\ell},...,Y_{N}^{\ell})$ converges in law to $\big(W(1/N),W(2/N),...,W(1)\big)$ as $\ell\to \infty$ with $W:\Omega\times[0,1]\to R$ being a one-dimensional standard Wiener process defined on some proper filtered probability space.

Define the sample space
\begin{equation}  \label{sample space ex2}
\hat\Omega= \underbrace{\{-1,1\}\times\{-1,1\}\times\cdots\times\{-1,1\}}_{N\ell},
\end{equation}
and let $\hat{\mathscr{F}}$ be the collection of all subsets of $\hat\Omega$  with $\hat P$ being the probability measure induced by the binomial distribution. Thus, $(\hat\Omega,\hat{\mathscr{F}},\hat P)$ forms a discrete probability space. Denote by $\mathcal{L}^2$ the Hilbert space of all the
square-integrable random vectors defined on $(\hat\Omega,\hat{\mathscr{F}},\hat P)$ with values in $R^N$.

Denote
$$\Delta Y_{i}^{\ell}:=(S_{(i+1)\ell}-S_{i\ell})/\sqrt{N\ell}, \ i=0,1,...,N-1,$$
and let $\hat{\mathscr{F}}_0=\{\emptyset,\Omega\}$, $\hat{\mathscr{F}}_{i}=\sigma(\Delta Y_0^{\ell},\Delta Y_1^{\ell},...,\Delta Y_{i-1}^{\ell})$ for $i=1,2,...,N-1$.

 we define
$$
\mathcal{N}\!:=\!\Big\{\hat {\bf u}\!=\!(\hat u_{0},\hat u_{1},...,\hat u_{N-1})\!\in\! \mathcal{L}^2 \ \Big|\ \hat u_{i} \mbox{ is } \hat{\mathscr{F}}_{i} \mbox{-measurable},~ i=0,1,\ldots,N-1\Big\}
$$
and
\begin{equation*}
\mathcal{C}:=\Big\{\hat {\bf u}=(\hat u_{0}, \hat u_{1},...,\hat u_{N-1})\in \mathcal{L}^2\ \Big|\ \hat u_{i}(\omega)\in [0,1],\ a.s.\ \omega\in \hat\Omega,\ i= 0,1,\ldots,N-1 \Big\}.
\end{equation*}

Let $\Delta=1/N$, $\hat{\bf u}=(\hat u_0,\hat u_1,...,\hat u_{N-1})$, $\hat{\bf x}=(\hat x_0,\hat x_1,...,\hat x_N)$,  and consider the following stochastic difference equation
\begin{equation}\label{eq dis_ex_randwalk}
\left\{
\begin{aligned}
&\hat{x}_{i+1}=\hat{x}_{i}+[\hat{x}_{i}-\hat{u}_i]\Delta+ \hat{u}_i\Delta Y_{i}^{\ell}  ,\ i=0,...,N-1,\\[+0.2em]
&\hat{x}_0=1.
\end{aligned}
\right.
\end{equation}

We denote $\hat\Psi_{i}=1+\Delta$ and $\hat\Lambda_{i}=-\Delta+\Delta Y_i^{\ell}$ for $i=0,1,...,N-1$, and define
$$ \hat Z_{i}=\Big[\prod_{j=i+1}^{N-1}\hat{\Psi}_{j}\Big]\hat{\Lambda}_{i},\ i=0,1,..., N-1,\quad
\hat\zeta=\sum_{i=0}^{N-1}\hat Z_{i}.$$
Then, the final value  of the solution to \eqref{eq dis_ex_randwalk} for any $\hat{\bf u}$ is represented as
$\hat x_N=\prod_{i=0}^{N-1}\hat{\Psi}_{i}+\sum_{i=0}^{N-1}\hat Z_{i}\hat{u}_{i}$.
Especially, $\hat \eta=\prod_{i=0}^{N-1}\hat{\Psi}_{i}+ \hat\zeta$ is the final values of the solution to \eqref{eq dis_ex_randwalk} with control   $\hat{\bf u}\equiv(1,1,...,1)$.

Defining the   cost function
\begin{equation}  \label{eq disc cot randwalk}
\hat J^N(\hat {\bf u})
\!=\!\frac{1}{2}\mathbb{E}\big|\hat x_{N}-\hat\eta \big|^2=\frac{1}{2}\mathbb{E}\big|\sum_{i=0}^{N-1}\hat Z_{i}\hat{u}_{i}-\hat\zeta \big|^2, 
\end{equation}
we consider the discrete stochastic optimal control problem: Find $ \hat{\bf u}^*\in \mathcal{C}\cap \mathcal{N}$ such that
\begin{equation}  \label{disc SOCP randomwalk}
\hat J^N(\hat{\bf u}^*)=\min_{\hat{\bf u}\in \mathcal{C}\cap \mathcal{N}}\hat J^N(\hat{\bf u}).
\end{equation}
This problem is a suitable approximation of a continuous stochastic optimal control problem whose control system is a It\^{o} stochastic differential equation and the control region is $[0,1]$, for more details please see \cite[Example 4.2]{Zhang22}. Clearly, $\hat{\bf u}^*\equiv(1,1,...,1)$ is the optimal solution to \eqref{disc SOCP randomwalk}.

Let
\begin{equation*}
M=(\hat Z_0,\hat Z_1,...,\hat Z_{N-1})^{\top}(\hat Z_0,\hat Z_1,...,\hat Z_{N-1}),
\quad
b=\hat\zeta(\hat Z_0,\hat Z_1,...,\hat Z_{N-1})^{\top}.
\end{equation*}
We have
$D \hat J^N(\hat{\bf u})=M\hat{\bf u}-b.$
To solve \eqref{disc SOCP randomwalk}, it suffices to solve the multi-stage stochastic variational inequality
\begin{equation}  \label{SVI_SOCP EX}
D \hat J^N(\hat{\bf u}^*)\in N_{\mathcal{C}\cap\mathcal{N}}(\hat{\bf u}^*).
\end{equation}

Note that the sample space $\hat\Omega$ defined by \eqref{sample space ex2} contains $2^{N\ell}$ sample points.  $\hat\Omega$ will have an extremely large number of sample points when  $\ell$ and/or $N$ sufficiently large. In this case, the Monte Carlo method has to be employed to calculate the projection onto $\mathcal{N}$. Let $\kappa(\in\mathbb{N})$ denote the number of sample size. The parameters are determined in the same manner as those in Table~\ref{Table1}. The numerical results for \eqref{SVI_SOCP EX} are reported in Table \ref{Table3} and Figures \ref{Figure 5}-\ref{Figure 8}. As shown in Table \ref{Table3} and Figures \ref{Figure 5}-\ref{Figure 8}, Algorithm \ref{Algorithm} has a better numerical performance than  $\mbox{PHA}$ and $\mbox{HRIPHA}$.
\end{example}

\begin{table}[ht]
\begin{center}
\begin{minipage}{\textwidth}
\caption{Numerical results for Example \ref{eg:2}}  \label{Table3}
\vspace{+1em}
\begin{tabular*}{\textwidth}{@{\extracolsep{\fill}}lcccccc@{\extracolsep{\fill}}}
\toprule
$\varepsilon=10^{-2} $
&\multicolumn{2}{@{}c@{}}{$N=5,\ell=20,\kappa=500$}
&\multicolumn{2}{@{}c@{}}{$N=5,\ell=40,\kappa=1000$} \\
\cmidrule{2-3}\cmidrule{4-5}%
 & Avg-time(s) & Avg-iter  &    Avg-time(s) & Avg-iter  \\
\midrule
PHA                 & 113.813   & 240  & 473.781   & 452\\
HRIPHA              & 57.016    & 160  & 282.528   & 302\\
Alg.\ref{Algorithm} & 36.688    & 106  & 187.516   & 200\\
\toprule
\end{tabular*}

\begin{tabular*}{\textwidth}{@{\extracolsep{\fill}}lcccccc@{\extracolsep{\fill}}}
\toprule
$\varepsilon=10^{-3} $
&\multicolumn{2}{@{}c@{}}{$N=5,\ell=20,\kappa=500$}
&\multicolumn{2}{@{}c@{}}{$N=5,\ell=40,\kappa=1000$} \\
\cmidrule{2-3}\cmidrule{4-5}%
 & Avg-time(s) & Avg-iter  &    Avg-time(s) & Avg-iter  \\
\midrule
PHA                  & 498.328  & 1010 & 2269.896 & 2228\\
HRIPHA               & 285.172  & 674  & 1634.788 & 1485\\
Alg.\ref{Algorithm}  & 180.734  & 461  & 1227.888 & 1248\\
\toprule
\end{tabular*}
\end{minipage}
\end{center}
\end{table}

\begin{figure}[ht!]
\centering
\begin{minipage}[t]{0.48\textwidth}
\centering
\includegraphics[width=6.5cm]{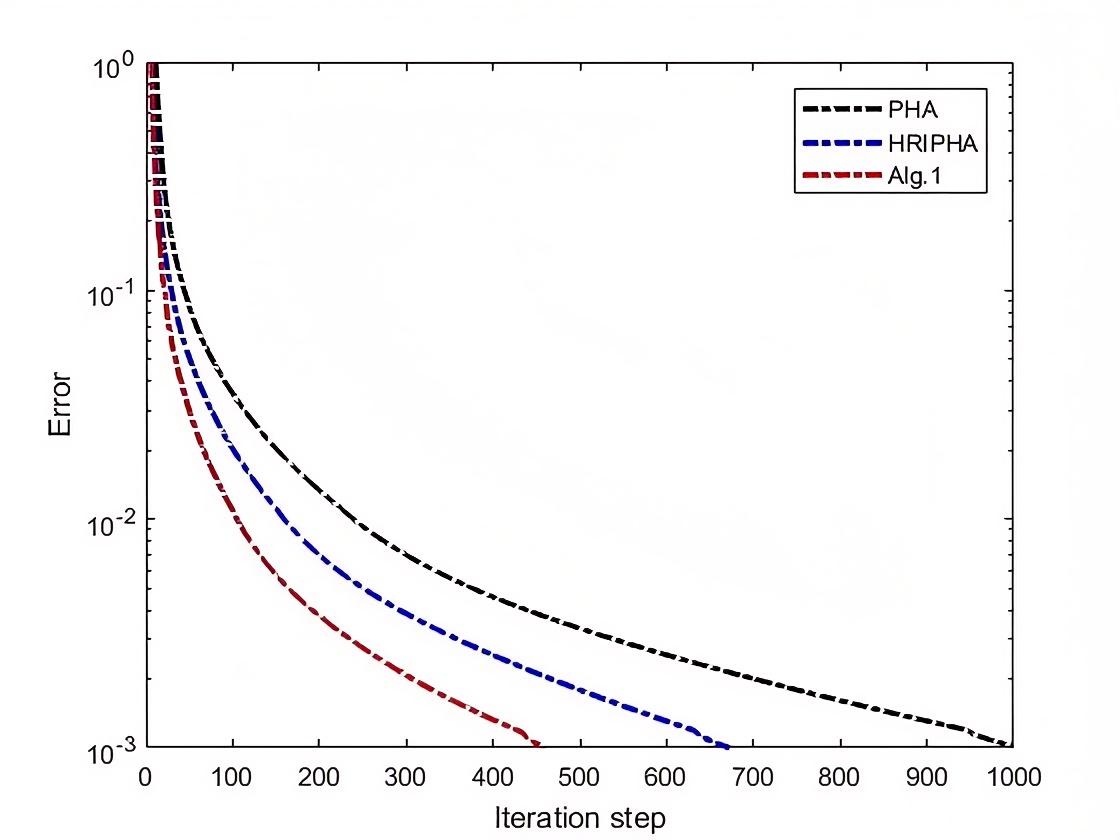}
\vspace{-1.5em}
\caption{$N=5,\ell=20,\kappa=500$,~$\varepsilon=10^{-3}$}  \label{Figure 5}
\end{minipage}
\begin{minipage}[t]{0.48\textwidth}
\centering
\includegraphics[width=6.5cm]{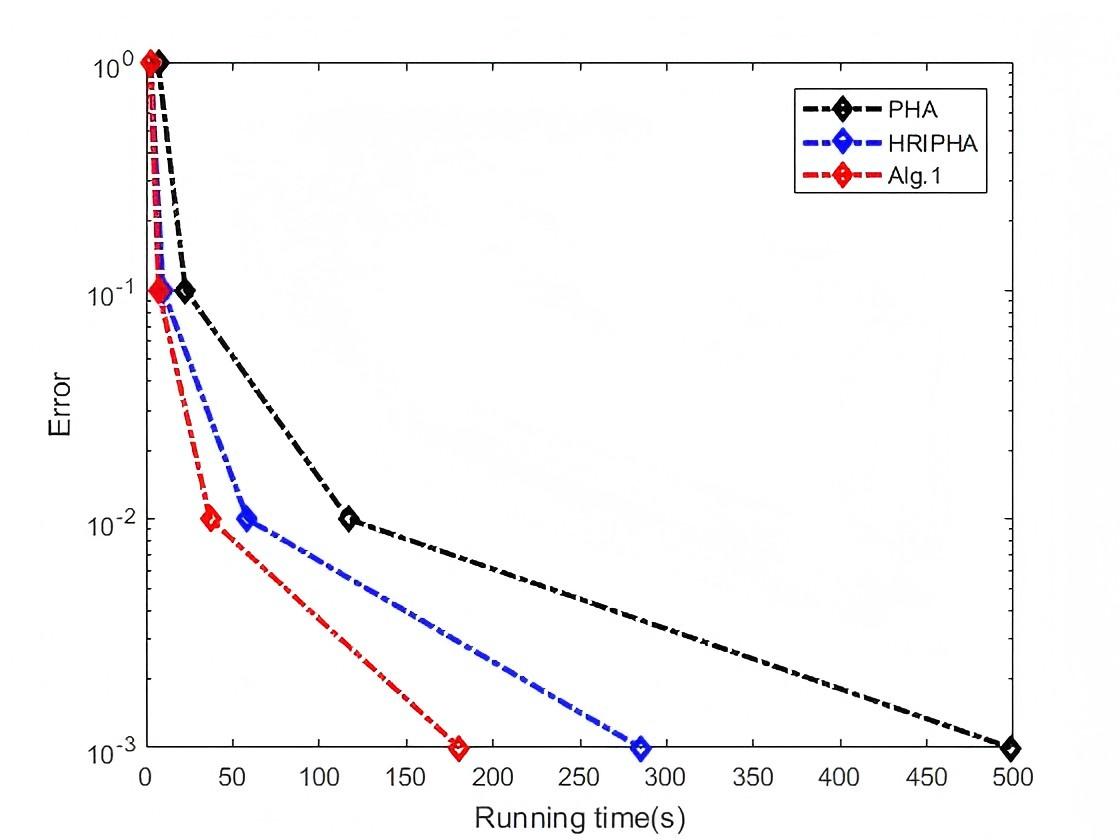}
\vspace{-1.5em}
\caption{$N=5,\ell=20,\kappa=500$,~$\varepsilon=10^{-3}$}  \label{Figure 6}
\end{minipage}
\end{figure}

\vspace{+0.5em}
\begin{figure}[ht!]
\centering
\begin{minipage}[t]{0.48\textwidth}
\centering
\includegraphics[width=6.5cm]{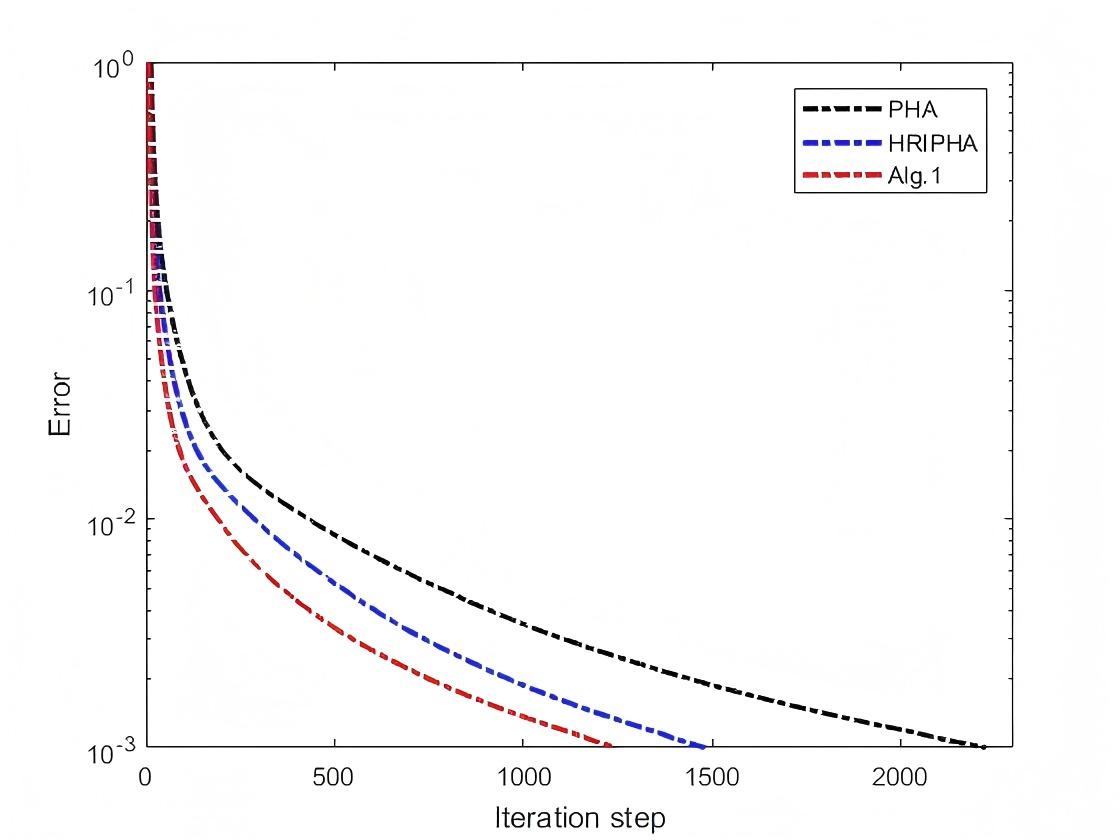}
\vspace{-1.5em}
\caption{$N=5,\ell=40,\kappa=1000$,~$\varepsilon=10^{-3}$}  \label{Figure 7}
\end{minipage}
\begin{minipage}[t]{0.48\textwidth}

\centering
\includegraphics[width=6.5cm]{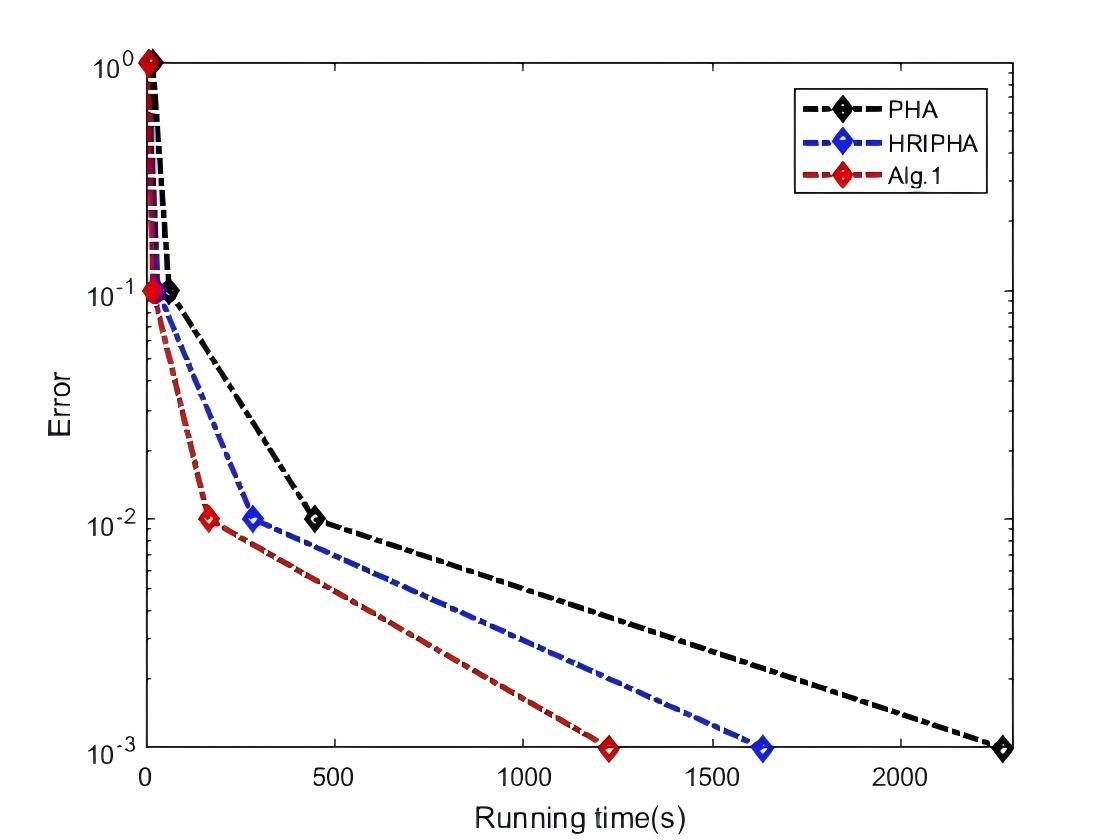}
\vspace{-1.5em}
\caption{$N=5,\ell=40,\kappa=1000$,~$\varepsilon=10^{-3}$}  \label{Figure 8}
\end{minipage}
\end{figure}

\section{The Proof of Theorem~\ref{theorem4}}
This section is devoted to proving Theorem~\ref{theorem4}.

{\bf Proof of Theorem \ref{theorem4}.}
Let us consider the following exact version of the Halpern-type relaxed inertial inexact $\mbox{PPA}$~\eqref{HRIIPPA 4}:
\begin{equation}\label{HRIIPPA 3}
\left\{\!\!\!\!\!\!\!
\begin{array}{ll}
& \hat{\vartheta}^{k}=\vartheta^{k}+\lambda_{k}(\vartheta^{k}-\vartheta^{k-1}), \\[+0.5em]
& \vartheta^{k+1}=\alpha_{k}\vartheta^{0}
+(1-\alpha_{k})\Big[(1-\beta_{k})\hat{\vartheta}^{k}+\beta_{k}J_{r^{-1}}^{\widehat{T}}(\hat{\vartheta}^{k})\Big],
\end{array}
\right.
\end{equation}
where $r>0$, $\vartheta^{-1}=u^{-1}$, $\vartheta^{0}=u^{0}$, the parameter sequences $\{\alpha_{k}\}_{k=0}^{\infty}\subseteq(0,1)$ and $\{\beta_{k}\}_{k=0}^{\infty}\subseteq(0,2)$  satisfy conditions (B1)-(B2) and, the parameter sequence  $\{\lambda_{k}\}_{k=0}^{\infty}\subseteq[0,1)$ satisfies
\begin{equation}\label{B3'}
\sum\limits_{k=0}^{\infty}\frac{\lambda_{k}}{\alpha_{k}}
      \norm{\vartheta^{k}-\vartheta^{k-1}}_{\mathcal{H}}<\infty.
\end{equation}

We first prove that  the sequence $\{\vartheta^{k}\}_{k=0}^{\infty}$ generated by the exact version of the Halpern-type relaxed inertial inexact $\mbox{PPA}$~\eqref{HRIIPPA 4} converges strongly to the vector $\vartheta^{\ast}=\Pi_{S}(\vartheta^{0})$. We divide our proof into four steps.

Step 1. In this step, we prove that the sequences $\{\vartheta^{k}\}_{k=0}^{\infty}$ and $\{\hat{\vartheta}^{k}\}_{k=0}^{\infty}$ are bounded.

Letting $\nu^{k}=(1-\beta_{k})\hat{\vartheta}^{k}+\beta_{k}J_{r^{-1}}^{\widehat{T}}(\hat{\vartheta}^{k})$. By Lemma~\ref{Norm proposition} (ii), we have
\begin{eqnarray} \label{Inequality9}
\norm{\nu^{k}-\vartheta^{\ast}}_{\mathcal{H}}^{2}
\!\!\!\!\! &=&\!\!\!\!\!  \norm{(1-\beta_{k})\hat{\vartheta}^{k}
+\beta_{k}J_{r^{-1}}^{\widehat{T}}(\hat{\vartheta}^{k})-\vartheta^{\ast}}_{\mathcal{H}}^{2} \nonumber\\
\!\!\!\!\! & = &\!\!\!\!\! \norm{(1-\beta_{k})(\hat{\vartheta}^{k}-\vartheta^{\ast})
+\beta_{k}(J_{r^{-1}}^{\widehat{T}}(\hat{\vartheta}^{k})-\vartheta^{\ast})}_{\mathcal{H}}^{2} \nonumber\\
\!\!\!\!\! & =&\!\!\!\!\! (1-\beta_{k})\norm{\hat{\vartheta}^{k}-\vartheta^{\ast}}_{\mathcal{H}}^{2}
+\beta_{k}\norm{J_{r^{-1}}^{\widehat{T}}(\hat{\vartheta}^{k})-\vartheta^{\ast}}_{\mathcal{H}}^{2}
-\beta_{k}(1-\beta_{k})\norm{J_{r^{-1}}^{\widehat{T}}(\hat{\vartheta}^{k})-\hat{\vartheta}^{k}}_{\mathcal{H}}^{2}.\qquad
\end{eqnarray}
Since $\vartheta^{\ast}\in S$, we have $\vartheta^{\ast}=J_{r^{-1}}^{\widehat{T}}(\vartheta^{\ast})$. Then, by Proposition~\ref{Convergence proposition} (v), we deduce that
\begin{equation} \label{Inequality10}
\begin{split}
\norm{J_{r^{-1}}^{\widehat{T}}(\hat{\vartheta}^{k})-\vartheta^{\ast}}_{\mathcal{H}}^{2}
& = \norm{J_{r^{-1}}^{\widehat{T}}(\hat{\vartheta}^{k})-J_{r^{-1}}^{\widehat{T}}(\vartheta^{\ast})}_{\mathcal{H}}^{2} \\
& \leq \norm{\hat{\vartheta}^{k}-\vartheta^{\ast}}_{\mathcal{H}}^{2}
-\norm{(I-J_{r^{-1}}^{\widehat{T}})(\hat{\vartheta}^{k})-(I-J_{r^{-1}}^{\widehat{T}})(\vartheta^{\ast})}_{\mathcal{H}}^{2} \\
& = \norm{\hat{\vartheta}^{k}-\vartheta^{\ast}}_{\mathcal{H}}^{2}
-\norm{\hat{\vartheta}^{k}-J_{r^{-1}}^{\widehat{T}}(\hat{\vartheta}^{k})}_{\mathcal{H}}^{2}.
\end{split}
\end{equation}
Combining \eqref{Inequality9} with \eqref{Inequality10}, we have
\begin{equation} \label{Inequality11}
\begin{split}
\norm{\nu^{k}-\vartheta^{\ast}}_{\mathcal{H}}^{2}
\leq & (1-\beta_{k})\norm{\hat{\vartheta}^{k}-\vartheta^{\ast}}_{\mathcal{H}}^{2}
+\beta_{k}\norm{\hat{\vartheta}^{k}-\vartheta^{\ast}}_{\mathcal{H}}^{2} \\
& -\beta_{k}\norm{\hat{\vartheta}^{k}-J_{r^{-1}}^{\widehat{T}}(\hat{\vartheta}^{k})}_{\mathcal{H}}^{2}
-\beta_{k}(1-\beta_{k})\norm{J_{r^{-1}}^{\widehat{T}}(\hat{\vartheta}^{k})-\hat{\vartheta}^{k}}_{\mathcal{H}}^{2} \\
= & \norm{\hat{\vartheta}^{k}-\vartheta^{\ast}}_{\mathcal{H}}^{2}
-\beta_{k}(2-\beta_{k})\norm{J_{r^{-1}}^{\widehat{T}}(\hat{\vartheta}^{k})-\hat{\vartheta}^{k}}_{\mathcal{H}}^{2}.
\end{split}
\end{equation}
Since $\{\beta_{k}\}_{k=0}^{\infty}\subseteq (0,2)$, we have
$$\norm{\nu^{k}-\vartheta^{\ast}}_{\mathcal{H}}\leq \norm{\hat{\vartheta}^{k}-\vartheta^{\ast}}_{\mathcal{H}}.$$
By the definition of $\hat{\vartheta}^{k}$, we deduce that
\begin{eqnarray} \label{Inequality7}
\norm{\nu^{k}-\vartheta^{\ast}}_{\mathcal{H}}
\!\!\!\!\! & \leq&\!\!\!\!\!  \norm{\vartheta^{k}+\lambda_{k}(\vartheta^{k}-\vartheta^{k-1})-\vartheta^{\ast}}_{\mathcal{H}} \nonumber\\
\!\!\!\!\! &\leq&\!\!\!\!\!   \norm{\vartheta^{k}-\vartheta^{\ast}}_{\mathcal{H}}
+\lambda_{k}\norm{\vartheta^{k}-\vartheta^{k-1}}_{\mathcal{H}}.
\end{eqnarray}
Then, it follows that
\begin{equation} \label{Inequality8}
\begin{split}
\norm{\vartheta^{k+1}-\vartheta^{\ast}}_{\mathcal{H}} & = \norm{\alpha_{k}\vartheta^{0}+(1-\alpha_{k})\nu^{k}-\vartheta^{\ast}}_{\mathcal{H}} \\
& = \norm{\alpha_{k}(\vartheta^{0}-\vartheta^{\ast})
+(1-\alpha_{k})(\nu^{k}-\vartheta^{\ast})}_{\mathcal{H}} \\
& \leq \alpha_{k}\norm{\vartheta^{0}-\vartheta^{\ast}}_{\mathcal{H}}
+(1-\alpha_{k})\norm{\nu^{k}-\vartheta^{\ast}}_{\mathcal{H}} \\
& \leq \alpha_{k}\norm{\vartheta^{0}-\vartheta^{\ast}}_{\mathcal{H}}
+(1-\alpha_{k})\norm{\vartheta^{k}-\vartheta^{\ast}}_{\mathcal{H}}
+\lambda_{k}(1-\alpha_{k})\norm{\vartheta^{k}-\vartheta^{k-1}}_{\mathcal{H}} \\
& \leq \alpha_{k}\norm{\vartheta^{0}-\vartheta^{\ast}}_{\mathcal{H}}
+(1-\alpha_{k})\norm{\vartheta^{k}-\vartheta^{\ast}}_{\mathcal{H}}
+\lambda_{k}\norm{\vartheta^{k}-\vartheta^{k-1}}_{\mathcal{H}} \\
& =(1-\alpha_{k})\norm{\vartheta^{k}-\vartheta^{\ast}}_{\mathcal{H}}
+\alpha_{k}\big(\norm{\vartheta^{0}-\vartheta^{\ast}}_{\mathcal{H}}
+\frac{\lambda_{k}}{\alpha_{k}}\norm{\vartheta^{k}-\vartheta^{k-1}}_{\mathcal{H}}\big).
\end{split}
\end{equation}
By \eqref{B3'}, we obtain that $\frac{\lambda_{k}}{\alpha_{k}}\norm{\vartheta^{k}-\vartheta^{k-1}}_{\mathcal{H}}$ converges to 0 as $k\rightarrow\infty$, hence, $\sup\limits_{k\geq0}\frac{\lambda_{k}}{\alpha_{k}}\norm{\vartheta^{k}-\vartheta^{k-1}}_{\mathcal{H}}$ is bounded. Let $$M_{1}=2\mbox{max}\Big\{\norm{\vartheta^{0}-\vartheta^{\ast}}_{\mathcal{H}},\
\sup\limits_{k\geq0}\frac{\lambda_{k}}{\alpha_{k}}\norm{\vartheta^{k}-\vartheta^{k-1}}_{\mathcal{H}}\Big\}.$$ Then, by \eqref{Inequality8}, we have
\begin{equation*}
\norm{\vartheta^{k+1}-\vartheta^{\ast}}_{\mathcal{H}}\leq (1-\alpha_{k})\norm{\vartheta^{k}-\vartheta^{\ast}}_{\mathcal{H}}
+\alpha_{k}M_{1}\leq \mbox{max}\big\{\norm{\vartheta^{k}-\vartheta^{\ast}}_{\mathcal{H}},M_{1}\big\}.
\end{equation*}
It follows that
$\norm{\vartheta^{k+1}-\vartheta^{\ast}}_{\mathcal{H}}\leq \mbox{max}\big\{\norm{\vartheta^{0}-\vartheta^{\ast}}_{\mathcal{H}},M_{1}\big\}$
and the sequence $\{\vartheta^{k}\}_{k=0}^{\infty}$ is bounded. Moreover, by the definition of $\hat{\vartheta}^{k}$, we have $\{\hat{\vartheta}^{k}\}_{k=0}^{\infty}$ is also bounded.

Step 2. In this step, we prove that
\begin{equation} \label{Inequality4}
\norm{\vartheta^{k+1}-\vartheta^{\ast}}_{\mathcal{H}}^{2}\leq (1-\alpha_{k})\norm{\vartheta^{k}-\vartheta^{\ast}}_{\mathcal{H}}^{2}+\alpha_{k}\gamma_{k},
\end{equation}
where
\begin{equation*}
\begin{split}
\gamma_{k}
= & \alpha_{k}(1-\alpha_{k})\frac{\lambda_{k}^{2}}{\alpha_{k}^{2}}\norm{\vartheta^{k}-\vartheta^{k-1}}_{\mathcal{H}}^{2}
+ 2(1-\alpha_{k})\frac{\lambda_{k}}{\alpha_{k}}
\inner{\vartheta^{k}-\vartheta^{k-1}}{\vartheta^{k}-\vartheta^{\ast}}_{\mathcal{H}} \\
& -\frac{1-\alpha_{k}}{\alpha_{k}}\beta_{k}(2-\beta_{k})
\norm{J_{r^{-1}}^{\widehat{T}}(\hat{\vartheta}^{k})-\hat{\vartheta}^{k}}_{\mathcal{H}}^{2}
+2\inner{\vartheta^{0}-\vartheta^{\ast}}{\vartheta^{k+1}-\vartheta^{\ast}}_{\mathcal{H}}.
\end{split}
\end{equation*}

By the definition of $\vartheta^{k+1}$ and Lemma~\ref{Norm proposition}~(i), we have
\begin{equation*}
\begin{split}
& \norm{\vartheta^{k+1}-\vartheta^{\ast}}_{\mathcal{H}}^{2} \\
= & \norm{\alpha_{k}\vartheta^{0}+(1-\alpha_{k})\nu^{k}-\vartheta^{\ast}}_{\mathcal{H}}^{2}
\\
=&\norm{\alpha_{k}(\vartheta^{0}-\vartheta^{\ast})+(1-\alpha_{k})(\nu^{k}-\vartheta^{\ast})}_{\mathcal{H}}^{2} \\
 \leq &(1-\alpha_{k})^{2}\norm{\nu^{k}-\vartheta^{\ast}}_{\mathcal{H}}^{2}
+2\alpha_{k}\inner{\vartheta^{0}-\vartheta^{\ast}}{\vartheta^{k+1}-\vartheta^{\ast}}_{\mathcal{H}} \\
\leq&  (1-\alpha_{k})\norm{\nu^{k}-\vartheta^{\ast}}_{\mathcal{H}}^{2}
+2\alpha_{k}\inner{\vartheta^{0}-\vartheta^{\ast}}{\vartheta^{k+1}-\vartheta^{\ast}}_{\mathcal{H}}.
\end{split}
\end{equation*}
Then, by \eqref{Inequality11} and the definition of $\hat{\vartheta}^{k}$, we deduce that
\begin{eqnarray*}
\!\!\!&&\!\!\! \norm{\vartheta^{k+1}-\vartheta^{\ast}}_{\mathcal{H}}^{2} \\
\!\!\!& \leq &\!\!\! (1-\alpha_{k})\norm{\hat{\vartheta}^{k}-\vartheta^{\ast}}_{\mathcal{H}}^{2}
-(1-\alpha_{k})\beta_{k}(2-\beta_{k})\norm{J_{r^{-1}}^{\widehat{T}}(\hat{\vartheta}^{k})-\hat{\vartheta}^{k}}_{\mathcal{H}}^{2}\\
\!\!\!&&\!\!\!+2\alpha_{k}\inner{\vartheta^{0}-\vartheta^{\ast}}{\vartheta^{k+1}-\vartheta^{\ast}}_{\mathcal{H}} \\
\!\!\!&=&\!\!\! (1-\alpha_{k})\norm{\vartheta^{k}-\vartheta^{\ast}}_{\mathcal{H}}^{2}
+(1-\alpha_{k})\lambda_{k}^{2}\norm{\vartheta^{k}-\vartheta^{k-1}}_{\mathcal{H}}^{2}\\
\!\!\!&&\!\!\!+2(1-\alpha_{k})\lambda_{k}\inner{\vartheta^{k}-\vartheta^{k-1}}{\vartheta^{k}-\vartheta^{\ast}}_{\mathcal{H}} \\
\!\!\!&&\!\!\!  -(1-\alpha_{k})\beta_{k}(2-\beta_{k})\norm{J_{r^{-1}}^{\widehat{T}}(\hat{\vartheta}^{k})-\hat{\vartheta}^{k}}_{\mathcal{H}}^{2}
+2\alpha_{k}\inner{\vartheta^{0}-\vartheta^{\ast}}{\vartheta^{k+1}-\vartheta^{\ast}}_{\mathcal{H}} \\
\!\!\!&=&\!\!\! (1-\alpha_{k})\norm{\vartheta^{k}-\vartheta^{\ast}}_{\mathcal{H}}^{2}
+\alpha_{k}\Big[\alpha_{k}(1-\alpha_{k})\frac{\lambda_{k}^{2}}{\alpha_{k}^{2}}\norm{\vartheta^{k}-\vartheta^{k-1}}_{\mathcal{H}}^{2}
\\ \!\!\!&&\!\!\!+ 2(1-\alpha_{k})\frac{\lambda_{k}}{\alpha_{k}}\inner{\vartheta^{k}-\vartheta^{k-1}}{\vartheta^{k}-\vartheta^{\ast}}_{\mathcal{H}} \\
\!\!\!&&\!\!\!  -\frac{1-\alpha_{k}}{\alpha_{k}}\beta_{k}(2-\beta_{k})
\norm{J_{r^{-1}}^{\widehat{T}}(\hat{\vartheta}^{k})-\hat{\vartheta}^{k}}_{\mathcal{H}}^{2}
+2\inner{\vartheta^{0}-\vartheta^{\ast}}{\vartheta^{k+1}-\vartheta^{\ast}}_{\mathcal{H}}\Big]\\
\!\!\!&=&\!\!\!(1-\alpha_{k})\norm{\vartheta^{k}-\vartheta^{\ast}}_{\mathcal{H}}^{2}
+\alpha_{k}\gamma_{k}.
\end{eqnarray*}
This proves \eqref{Inequality4}.

Step 3. In this step, we prove that
$$0\leq\limsup\limits_{k\rightarrow\infty}\gamma_{k}<+\infty.$$

By (\ref{B3'}), we deduce that there exists a constant $M_{2}\geq 0$ such that
\begin{equation*}
\frac{\lambda_{k}}{\alpha_{k}}\norm{\vartheta^{k}-\vartheta^{k-1}}_{\mathcal{H}}\leq M_{2},~\forall~k\geq 0.
\end{equation*}
By $\alpha_{k}\in(0,1)$, $\beta_{k}\in (0,2)$ for any $k\geq 0$, we have $\frac{1-\alpha_{k}}{\alpha_{k}}\beta_{k}(2-\beta_{k})>0$  for all $k\geq 0$. Then, by the boundedness of $\{\vartheta^{k}\}_{k=0}^{\infty}$, we have
\begin{eqnarray*}
\sup\limits_{k\geq 0}\gamma_{k}
\!\!\!&=&\!\!\! \sup\limits_{k\geq 0}\Big\{\alpha_{k}(1-\alpha_{k})
\frac{\lambda_{k}^{2}}{\alpha_{k}^{2}}\norm{\vartheta^{k}-\vartheta^{k-1}}_{\mathcal{H}}^{2}
+ 2(1-\alpha_{k})\frac{\lambda_{k}}{\alpha_{k}}
\inner{\vartheta^{k}-\vartheta^{k-1}}{\vartheta^{k}-\vartheta^{\ast}}_{\mathcal{H}} \\
\!\!\!&&\!\!\!\quad\ \ \ ~~-\frac{1-\alpha_{k}}{\alpha_{k}}\beta_{k}(2-\beta_{k})
\norm{J_{r^{-1}}^{\widehat{T}}(\hat{\vartheta}^{k})-\hat{\vartheta}^{k}}_{\mathcal{H}}^{2}
+2\inner{\vartheta^{0}-\vartheta^{\ast}}{\vartheta^{k+1}-\vartheta^{\ast}}_{\mathcal{H}}\Big\} \\
\!\!\!&\leq&\!\!\! \sup\limits_{k\geq 0}\Big\{\alpha_{k}(1-\alpha_{k})
\frac{\lambda_{k}^{2}}{\alpha_{k}^{2}}\norm{\vartheta^{k}-\vartheta^{k-1}}_{\mathcal{H}}^{2}
+ 2(1-\alpha_{k})\frac{\lambda_{k}}{\alpha_{k}}
\inner{\vartheta^{k}-\vartheta^{k-1}}{\vartheta^{k}-\vartheta^{\ast}}_{\mathcal{H}} \\
\!\!\!&&\!\!\!\quad\quad\ +2\inner{\vartheta^{0}-\vartheta^{\ast}}{\vartheta^{k+1}-\vartheta^{\ast}}_{\mathcal{H}}\Big\} \\
 \!\!\!&\leq&\!\!\! M_{2}^{2}+2M_{2}\big(\sup\limits_{k\geq 0}
\norm{\vartheta^{k}}_{\mathcal{H}}+\norm{\vartheta^{\ast}}_{\mathcal{H}}\big)
+2\norm{\vartheta^{0}-\vartheta^{\ast}}_{\mathcal{H}}\big(\sup\limits_{k\geq 0}\norm{\vartheta^{k}}_{\mathcal{H}}+\norm{\vartheta^{\ast}}_{\mathcal{H}}\big)\\
\!\!\!&<&\!\!\!\infty,
\end{eqnarray*}
which implies that $\limsup\limits_{k\rightarrow\infty}\gamma_{k}<\infty$.

We now show that $\limsup\limits_{k\rightarrow\infty}\gamma_{k}\geq 0$. If $\limsup\limits_{k\rightarrow\infty}\gamma_{k}<0$, then there exists $k_{0}$ and $b>0$ such that $\gamma_{k}\leq -b$ for all $k\geq k_{0}$. It follows from \eqref{Inequality4} that
\begin{equation*}
\begin{split}
\norm{\vartheta^{k+1}-\vartheta^{\ast}}_{\mathcal{H}}^{2}
\leq & (1-\alpha_{k})\norm{\vartheta^{k}-\vartheta^{\ast}}_{\mathcal{H}}^{2}+\alpha_{k}\gamma_{k} \\
\leq & (1-\alpha_{k})\norm{\vartheta^{k}-\vartheta^{\ast}}_{\mathcal{H}}^{2}-b\alpha_{k} \\
\leq & \norm{\vartheta^{k}-\vartheta^{\ast}}_{\mathcal{H}}^{2}-b\alpha_{k}
\end{split}
\end{equation*}
for all $k\geq k_{0}$. Consequently,
\begin{equation*}
0\leq b\alpha_{k} \leq \norm{\vartheta^{k}-\vartheta^{\ast}}_{\mathcal{H}}^{2}- \norm{\vartheta^{k+1}-\vartheta^{\ast}}_{\mathcal{H}}^{2}~~\mbox{for all}~~k\geq k_{0}.
\end{equation*}
By induction, we have
\begin{equation*}
0\leq b\sum\limits_{i=k_{0}}^{k}\alpha_{i} \leq \norm{\vartheta^{k_{0}}-\vartheta^{\ast}}_{\mathcal{H}}^{2}- \norm{\vartheta^{k+1}-\vartheta^{\ast}}_{\mathcal{H}}^{2}\leq \norm{\vartheta^{k_{0}}-\vartheta^{\ast}}_{\mathcal{H}}^{2}.
\end{equation*}
Then, taking $k\rightarrow \infty$, it follows that
\begin{equation*}
0\leq b\sum\limits_{i=k_{0}}^{\infty}\alpha_{i} \leq \norm{\vartheta^{k_{0}}-\vartheta^{\ast}}_{\mathcal{H}}^{2}<\infty,
\end{equation*}
which contradicts to condition (B1) that $\sum\limits_{k=0}^{\infty}\alpha_{k}=\infty$. Therefore,  $\limsup\limits_{k\rightarrow\infty}\gamma_{k}\geq 0$.

Step 4. In this step, we prove that
$$\limsup\limits_{k\rightarrow\infty}\gamma_{k}= 0$$
and complete the proof of the strong  convergence of $\{\vartheta^{k}\}_{k=0}^{\infty}$.

By Step 3, $\limsup\limits_{k\rightarrow\infty}\gamma_{k}$ is a finite number.
Then, we can take a subsequence $\{\gamma_{k_{j}}\}_{j=0}^{\infty}$ from $\{\gamma_{k}\}_{k=0}^{\infty}$ such that
\begin{equation} \label{Inequality12}
\begin{split}
 \limsup\limits_{k\rightarrow\infty}\gamma_{k}
= &\lim\limits_{j\rightarrow\infty}\gamma_{k_{j}} \\
= &\lim\limits_{j\rightarrow\infty}\Big\{\alpha_{k_{j}}(1-\alpha_{k_{j}})
\frac{\lambda_{k_{j}}^{2}}{\alpha_{k_{j}}^{2}}
\norm{\vartheta^{k_{j}}-\vartheta^{k_{j}-1}}_{\mathcal{H}}^{2}+ 2(1-\alpha_{k_{j}})\frac{\lambda_{k_{j}}}{\alpha_{k_{j}}}
\inner{\vartheta^{k_{j}}-\vartheta^{k_{j}-1}}{\vartheta^{k_{j}}-\vartheta^{\ast}}_{\mathcal{H}} \\
& ~~-\frac{1-\alpha_{k_{j}}}{\alpha_{k_{j}}}\beta_{k_{j}}(2-\beta_{k_{j}})
\norm{J_{r^{-1}}^{\widehat{T}}(\hat{\vartheta}^{k_{j}})-\hat{\vartheta}^{k_{j}}}_{\mathcal{H}}^{2}
+2\inner{\vartheta^{0}-\vartheta^{\ast}}{\vartheta^{k_{j}+1}-\vartheta^{\ast}}_{\mathcal{H}}\Big\} \\
=& \lim\limits_{j\rightarrow\infty}
\Big\{2\inner{\vartheta^{0}-\vartheta^{\ast}}{\vartheta^{k_{j}+1}-\vartheta^{\ast}}_{\mathcal{H}}
-\frac{1-\alpha_{k_{j}}}{\alpha_{k_{j}}}\beta_{k_{j}}(2-\beta_{k_{j}})
\norm{J_{r^{-1}}^{\widehat{T}}(\hat{\vartheta}^{k_{j}})-\hat{\vartheta}^{k_{j}}}_{\mathcal{H}}^{2}\Big\},
\end{split}
\end{equation}
where the last equation holds from the facts that $\lim\limits_{k\rightarrow\infty}\frac{\lambda_{k}}{\alpha_{k}}\norm{\vartheta^{k}-\vartheta^{k-1}}_{\mathcal{H}}=0$,  $\{\vartheta^{k}\}_{k=0}^{\infty}$ is bounded and  $\{\alpha_{k}\}_{k=0}^{\infty}\subseteq (0,1)$.

Since the sequence $\{\vartheta^{k}\}_{k=0}^{\infty}$ is bounded, there is a subsequence of $\{\vartheta^{k_{j}+1}\}_{j=0}^{\infty}$ converging weakly to some $\hat \vartheta$. Without loss of generality, we assume that
\begin{equation}\label{eq43}
\vartheta^{k_{j}+1}\rightharpoonup  \hat \vartheta    \text{ as } j\to \infty.
\end{equation}
Then,  we have
\begin{equation*}
\lim\limits_{j\rightarrow\infty}
\inner{\vartheta^{0}-\vartheta^{\ast}}{\vartheta^{k_{j}+1}-\vartheta^{\ast}}_{\mathcal{H}}
=\inner{\vartheta^{0}-\vartheta^{\ast}}{\hat\vartheta-\vartheta^{\ast}}_{\mathcal{H}}.
\end{equation*}
Consequently, it turns out from \eqref{Inequality12} that the limit
\begin{equation} \label{Inequality13}
\lim\limits_{j\rightarrow\infty}\beta_{k_{j}}(2-\beta_{k_{j}})\frac{1-\alpha_{k_{j}}}{\alpha_{k_{j}}}
\norm{J_{r^{-1}}^{\widehat{T}}(\hat{\vartheta}^{k_{j}})-\hat{\vartheta}^{k_{j}}}_{\mathcal{H}}^{2}
\end{equation}
exits.
Notice that $0<\liminf\limits_{k\rightarrow\infty}\beta_{k}\leq\limsup\limits_{k\rightarrow\infty}\beta_{k}<2$, then we have $\liminf\limits_{k\rightarrow\infty}\beta_{k}(2-\beta_{k})>0$. It implies that
\begin{equation*}
\Bigg\{\frac{1-\alpha_{k_{j}}}{\alpha_{k_{j}}}\norm{J_{r^{-1}}^{\widehat{T}}(\hat{\vartheta}^{k_{j}})-\hat{\vartheta}^{k_{j}}}_{\mathcal{H}}^{2}\Bigg\}_{j=0}^{\infty}
\end{equation*}
is a bounded sequence. By the assumption that $\lim\limits_{k\rightarrow\infty}\alpha_{k}=0$ in condition (B1), we deduce that
\begin{equation*}
\lim\limits_{j\rightarrow\infty}\norm{J_{r^{-1}}^{\widehat{T}}(\hat{\vartheta}^{k_{j}})-\hat{\vartheta}^{k_{j}}}_{\mathcal{H}}^{2}
=\lim\limits_{j\rightarrow\infty}\frac{\alpha_{k_{j}}}{1-\alpha_{k_{j}}}\cdot\Big(\frac{1-\alpha_{k_{j}}}{\alpha_{k_{j}}}
\norm{J_{r^{-1}}^{\widehat{T}}(\hat{\vartheta}^{k_{j}})-\hat{\vartheta}^{k_{j}}}_{\mathcal{H}}^{2}\Big)=0.
\end{equation*}
Consequently,
\begin{equation}\label{eq44}
\lim\limits_{j\rightarrow\infty}\norm{J_{r^{-1}}^{\widehat{T}}(\hat{\vartheta}^{k_{j}})-\hat{\vartheta}^{k_{j}}}_{\mathcal{H}}=0.
\end{equation}
Then, by the definition of $\nu^{k}$, we obtain that
\begin{equation}\label{Inequality14+}
\norm{\nu^{k_{j}}-\hat{\vartheta}^{k_{j}}}_{\mathcal{H}}
=\beta_{k_{j}}\norm{J_{r^{-1}}^{\widehat{T}}(\hat{\vartheta}^{k_{j}})-\hat{\vartheta}^{k_{j}}}_{\mathcal{H}}=0~~\mbox{as}~~j\rightarrow\infty.
\end{equation}

By condition (B1), (\ref{B3'}) and the definition of $\hat{\vartheta}^{k}$, we have
\begin{equation} \label{Inequality14}
\norm{\hat{\vartheta}^{k_{j}}-\vartheta^{k_{j}}}_{\mathcal{H}}
=\lambda_{k_{j}}\norm{\vartheta^{k_{j}}-\vartheta^{k_{j}-1}}_{\mathcal{H}}
=\alpha_{k_{j}}\cdot\frac{\lambda_{k_{j}}}{\alpha_{k_{j}}}\norm{\vartheta^{k_{j}}-\vartheta^{k_{j}-1}}_{\mathcal{H}}
=0 \mbox{ as } j\rightarrow\infty.
\end{equation}
This, together with \eqref{Inequality14+}, implies that
\begin{equation}\label{Inequality14++}
\norm{\nu^{k_{j}}-\vartheta^{k_{j}}}_{\mathcal{H}}
\leq\norm{\nu^{k_{j}}-\hat{\vartheta}^{k_{j}}}_{\mathcal{H}}+\norm{\hat{\vartheta}^{k_{j}}-\vartheta^{k_{j}}}_{\mathcal{H}}=0~~\mbox{as}~~j\rightarrow\infty.
\end{equation}
Then, by condition (B1), the definition of $\vartheta^{k+1}$,  the boundedness of $\{\vartheta^{k}\}_{k=0}^{\infty}$ and (\ref{Inequality14++}), we obtain that
\begin{equation} \label{Inequality15}
\begin{split}
\norm{\vartheta^{k_{j}+1}-\vartheta^{k_{j}}}_{\mathcal{H}}
& =\norm{\alpha_{k_{j}}\vartheta^{0}+(1-\alpha_{k_{j}})\nu^{k_{j}}-\vartheta^{k_{j}}}_{\mathcal{H}} \\
& \leq\alpha_{k_{j}}\norm{\vartheta^{0}-\vartheta^{k_{j}}}_{\mathcal{H}}
+(1-\alpha_{k_{j}})\norm{\nu^{k_{j}}-\vartheta^{k_{j}}}_{\mathcal{H}}\rightarrow 0~~\mbox{as}~~j\rightarrow\infty.
\end{split}
\end{equation}

By \eqref{eq44}, \eqref{Inequality14}, \eqref{Inequality15} and (i) in Proposition \ref{Convergence proposition},  we obtain that
\begin{eqnarray*}
\!\!\!&&\!\!\!\lim\limits_{j\rightarrow\infty}\norm{J_{r^{-1}}^{\widehat{T}}(\vartheta^{k_{j}+1})-\vartheta^{k_{j}+1}}_{\mathcal{H}}\nonumber\\
\!\!\!&=& \!\!\! \lim\limits_{j\rightarrow\infty}\norm{J_{r^{-1}}^{\widehat{T}}(\vartheta^{k_{j}+1})-J_{r^{-1}}^{\widehat{T}}(\vartheta^{k_{j}}) +J_{r^{-1}}^{\widehat{T}}(\vartheta^{k_{j}})-J_{r^{-1}}^{\widehat{T}}(\hat{\vartheta}^{k_{j}})
+J_{r^{-1}}^{\widehat{T}}(\hat{\vartheta}^{k_{j}})-\hat{\vartheta}^{k_{j}}\nonumber\\
\!\!\!&&\!\!\!\qquad\ \
+\hat{\vartheta}^{k_{j}}-\vartheta^{k_{j}} +\vartheta^{k_{j}}   -\vartheta^{k_{j}+1}}_{\mathcal{H}}\nonumber\\
\!\!\!&\le&\!\!\! \lim\limits_{j\rightarrow\infty}
\big(2\norm{\vartheta^{k_{j}+1}-\vartheta^{k_{j}}}_{\mathcal{H}}  +2\norm{\vartheta^{k_{j}}-\hat{\vartheta}^{k_{j}}}_{\mathcal{H}}
+\norm{J_{r^{-1}}^{\widehat{T}}(\hat{\vartheta}^{k_{j}})-\hat{\vartheta}^{k_{j}}}_{\mathcal{H}}
\big)\nonumber\\
\!\!\!&=&\!\!\!0.
\end{eqnarray*}
This, together with \eqref{eq43} and Lemma~\ref{weakly-strongly closed}, implies that   $\hat \vartheta\in S$. Then, by \eqref{Inequality12}, we have
\begin{equation*}
\limsup\limits_{k\rightarrow\infty}\gamma_{k}\leq\lim\limits_{j\rightarrow\infty}
2\inner{\vartheta^{0}-\vartheta^{\ast}}{\vartheta^{k_{j}+1}-\vartheta^{\ast}}_{\mathcal{H}}
=2\inner{\vartheta^{0}-\vartheta^{\ast}}{\hat{\vartheta}-\vartheta^{\ast}}_{\mathcal{H}}\leq 0.
\end{equation*}
This, together with the conclusion of Step 3, implies that $\limsup\limits_{k\rightarrow\infty}\gamma_{k}=0.$
Consequently, by \eqref{Inequality4} and Lemma~\ref{Strong convergence}, we obtain that $\lim\limits_{k\rightarrow\infty}\norm{\vartheta^{k}-\vartheta^{\ast}}_{\mathcal{H}}=0$.

This completes the proof of the strong convergence of $\{\vartheta^{k}\}_{k=0}^{\infty}$.

Next, we prove that the sequence $\{u^{k}\}_{k=0}^{\infty}$ generated by the Halpern-type relaxed inertial inexact $\mbox{PPA}$~\eqref{HRIIPPA 4} converges strongly to the vector $u^{\ast}=\Pi_{S}(u^{0})$. Since $u^{0}=\vartheta^{0}$, $u^{\ast}=\vartheta^{\ast}$. It remains to show $\lim\limits_{k\rightarrow \infty}\norm{u^{k}-\vartheta^{k}}_{\mathcal{H}}=0$. By the definitions of $u^{k+1}$, $\vartheta^{k+1}$, and Proposition~\ref{Convergence proposition} (vi), we have
\begin{align} \label{Inequality17}
\begin{split}
& \norm{u^{k+1}-\vartheta^{k+1}}_{\mathcal{H}} \\
= & (1-\alpha_{k})\norm{(1-\beta_{k})(\hat{u}^{k}-\hat{\vartheta}^{k})
+\beta_{k}\big[J_{r^{-1}}^{\widehat{T}}(\hat{u}^{k}-r^{-1}e^{k})
-J_{r^{-1}}^{\widehat{T}}(\hat{\vartheta}^{k})\big]}_{\mathcal{H}} \\
= & (1-\alpha_{k})\norm{(1-\beta_{k})(\hat{u}^{k}-r^{-1}e^{k}-\hat{\vartheta}^{k}) \\
& +\beta_{k}\big[J_{r^{-1}}^{\widehat{T}}(\hat{u}^{k}-r^{-1}e^{k})
-J_{r^{-1}}^{\widehat{T}}(\hat{\vartheta}^{k})\big]+r^{-1}(1-\beta_{k})e^{k}}_{\mathcal{H}} \\
= & (1-\alpha_{k})\norm{(1-\frac{\beta_{k}}{2})(\hat{u}^{k}-r^{-1}e^{k}-\hat{\vartheta}^{k}) \\
& +\frac{\beta_{k}}{2}\big[(2J_{r^{-1}}^{\widehat{T}}-I)(\hat{u}^{k}-r^{-1}e^{k})
-(2J_{r^{-1}}^{\widehat{T}}-I)(\hat{\vartheta}^{k})\big]+r^{-1}(1-\beta_{k})e^{k}}_{\mathcal{H}} \\
\leq & (1-\alpha_{k})(1-\frac{\beta_{k}}{2})
\norm{\hat{u}^{k}-r^{-1}e^{k}-\hat{\vartheta}^{k}}_{\mathcal{H}} \\
& +(1-\alpha_{k})\frac{\beta_{k}}{2}
\norm{(2J_{r^{-1}}^{\widehat{T}}-I)(\hat{u}^{k}-r^{-1}e^{k})-(2J_{r^{-1}}^{\widehat{T}}-I)(\hat{\vartheta}^{k})}_{\mathcal{H}}
\\
& +r^{-1}(1-\alpha_{k})|1-\beta_{k}|\norm{e^{k}}_{\mathcal{H}} \\
\leq & (1-\alpha_{k})\norm{\hat{u}^{k}-r^{-1}e^{k}-\hat{\vartheta}^{k}}_{\mathcal{H}}
+r^{-1}(1-\alpha_{k})|1-\beta_{k}|\norm{e^{k}}_{\mathcal{H}} \\
\leq & (1-\alpha_{k})\norm{\hat{u}^{k}-\hat{\vartheta}^{k}}_{\mathcal{H}}
+2r^{-1}\norm{e^{k}}_{\mathcal{H}}.
\end{split}
\end{align}
In addition, by the definition of $\hat{u}^{k}$ and $\hat{\vartheta}^{k}$, we have
\begin{equation} \label{Inequality18}
\begin{split}
\norm{\hat{u}^{k}-\hat{\vartheta}^{k}}_{\mathcal{H}}
& = \norm{u^{k}+\theta_{k}(u^{k}-u^{k-1})
-\vartheta^{k}-\lambda_{k}(\vartheta^{k}-\vartheta^{k-1})}_{\mathcal{H}} \\
& = \norm{u^{k}-\vartheta^{k}+\theta_{k}(u^{k}-u^{k-1})
-\lambda_{k}(\vartheta^{k}-\vartheta^{k-1})}_{\mathcal{H}} \\
& \leq \norm{u^{k}-\vartheta^{k}}_{\mathcal{H}}+\theta_{k}\norm{u^{k}-u^{k-1}}_{\mathcal{H}}
+\lambda_{k}\norm{\vartheta^{k}-\vartheta^{k-1}}_{\mathcal{H}}.
\end{split}
\end{equation}
Combining  \eqref{Inequality17} with \eqref{Inequality18}, we deduce that
\begin{eqnarray}\label{eq52}
\!\!\!&&\!\!\! \norm{u^{k+1}-\vartheta^{k+1}}_{\mathcal{H}}\nonumber \\
\!\!\!&\leq&\!\!\!  (1-\alpha_{k})\norm{u^{k}-\vartheta^{k}}_{\mathcal{H}}
+(1-\alpha_{k})\theta_{k}\norm{u^{k}-u^{k-1}}_{\mathcal{H}}
+(1-\alpha_{k})\lambda_{k}\norm{\vartheta^{k}-\vartheta^{k-1}}_{\mathcal{H}}
+2r^{-1}\norm{e^{k}}_{\mathcal{H}} \nonumber\\
 \!\!\!&\leq &\!\!\!(1-\alpha_{k})\norm{u^{k}-\vartheta^{k}}_{\mathcal{H}}
+\theta_{k}\norm{u^{k}-u^{k-1}}_{\mathcal{H}}
+\lambda_{k}\norm{\vartheta^{k}-\vartheta^{k-1}}_{\mathcal{H}}
+2r^{-1}\norm{e^{k}}_{\mathcal{H}} \nonumber\\
\!\!\!&= &\!\!\! (1-\alpha_{k})\norm{u^{k}-\vartheta^{k}}_{\mathcal{H}}
+\alpha_{k}\frac{\theta_{k}}{\alpha_{k}}\norm{u^{k}-u^{k-1}}_{\mathcal{H}}
+\alpha_{k}\frac{\lambda_{k}}{\alpha_{k}}\norm{\vartheta^{k}-\vartheta^{k-1}}_{\mathcal{H}}
+2r^{-1}\norm{e^{k}}_{\mathcal{H}}.
\end{eqnarray}
By condition (B3) and (\ref{B3'}),
\begin{equation}\label{eq53}
\lim_{k\to \infty}(\frac{\theta_{k}}{\alpha_{k}}\norm{u^{k}-u^{k-1}}_{\mathcal{H}}+ \frac{\lambda_{k}}{\alpha_{k}}
\norm{\vartheta^{k}-\vartheta^{k-1}}_{\mathcal{H}})=0.
\end{equation}
Then, by \eqref{eq52}, \eqref{eq53}, condition (B4) and Lemma~\ref{Strong convergence}, we obtain that $\lim\limits_{k\rightarrow\infty}\norm{u^{k}-\vartheta^{k}}_{\mathcal{H}}=0$. This completes the proof of Theorem \ref{theorem4}.

\vspace{+1em}
\vspace{+1em}




\vspace{+1em}

\noindent {\bf Data availability} The data that support the findings of this study are available from the corresponding author upon reasonable request.

\vspace{+1em}

\noindent {\bf Code availability} The code that support the findings of this study are available from the corresponding author upon reasonable request.

\section*{Declarations}

\vspace{+1em}

\noindent {\bf Conflict of interest} The authors declare no conflict of interest.

\vspace{+1em}

\noindent {\bf Consent for publication} Not applicable.

\vspace{+1em}

\noindent {\bf Ethics approval} Not applicable.

{
\def\cprime{$'$} \def\cprime{$'$} \def\cprime{$'$}

}

\end{document}